\documentclass[a4paper,reqno,12pt]{amsart}

\usepackage[utf8]{inputenc}
\usepackage{pslatex}             
\usepackage{amssymb}
\usepackage{latexsym}
\usepackage{graphicx}
\usepackage[mathscr]{euscript}
\usepackage{url}
\usepackage{color}
\usepackage{harpoon}
\usepackage{tikz}
\usepackage{newtxmath}    
\usepackage{hyperref}
\usepackage{float}

\usepackage{fullpage} 
\usepackage{setspace}
\usepackage{adjustbox}
\usepackage{mathtools}
\allowdisplaybreaks
\mathtoolsset{showonlyrefs}

\def\today{\ifcase\month\or
  January\or February\or March\or April\or May\or June\or
  July\or August\or September\or October\or November\or December\fi
  \space\number\day, \number\year}

\theoremstyle{plain}
\newtheorem{theorem}{Theorem}

\newtheorem{lemma}[theorem]{Lemma}
\newtheorem{proposition}[theorem]{Proposition}

\theoremstyle{plain}
\newtheorem{definition}[theorem]{Definition}
\newtheorem*{problem}{Rudin's Problem}
\newtheorem*{remark*}{Remark}
\theoremstyle{remark}

\theoremstyle{plain}

\newcommand{\B}{\mathscr{B}}
\newcommand{\C}{\mathscr{C}}
\newcommand{\D}{\mathscr{D}}
\newcommand{\E}{\mathscr{E}}
\newcommand{\F}{\mathscr{F}}

\newcommand{\PolySpace}{\mathrm{H}}
\renewcommand{\H}{\mathrm{H}}

\newcommand{\cP}{\mathscr{P}}

\newcommand{\cS}{\mathscr{S}}

\newcommand{\n}{\mathbb{N}}
\newcommand{\z}{\mathbb{Z}}

\newcommand{\RR}{\mathbb{R}}
\newcommand{\cp}{\mathbb{C}} 

\newcommand{\ft}{\widehat}

\newcommand{\bo}{\boldsymbol}

\newcommand{\la}{\lambda}

\newcommand{\al}{\alpha}

\newcommand{\si}{\sigma}

\newcommand{\ov}{\overline}
\newcommand{\dd}{\mathrm{d}}

\newcommand{\p}{\varphi}

\newcommand{\aff}{\text{Aff}}
\newcommand{\GL}{\operatorname{GL}}
\newcommand{\Sphere}{\mathbb{S}}

\newcommand{\Z}{\mathbb{Z}}

\newcommand{\supp}{\operatorname{supp}}
\newcommand{\vol}{\operatorname{vol}}
\newcommand{\lcmk}{\operatorname{lcm}_k}
\newcommand{\len}{\operatorname{len}}
\newcommand{\sect}{\operatorname{sect}}

\newcommand{\mymod}[1]{{\, \, ({\rm mod} \ \, #1)}}

\theoremstyle{plain}
\newtheorem*{definition*}{Definition}
\theoremstyle{plain}
\providecommand{\widebar}{\overline}
\begin{document}

\title[]{Uniformly Bounded Orthonormal Bases of Homogeneous Polynomials on the Complex Sphere}
\author{Felipe Gon\c{c}alves}
\address{IMPA - Instituto de Matemática Pura e Aplicada, Rio de Janeiro, 22460-320, Brazil.}
\email{goncalves@impa.br}

\author{João P. Ferreira}
\address{IMPA - Instituto de Matemática Pura e Aplicada, Rio de Janeiro, 22460-320, Brazil.}
\email{joao.ferreira@impa.br}

\author{Maksym Radziwi\l{}\l{}}
\address{Courant Institute of Mathematical Sciences, New York University,
251 Mercer Street, New York, NY 10012, USA.}
\email{maksym.radziwill@gmail.com}

\date{\today}
\subjclass[2020]{Primary 42C15; Secondary 11L07, 32A35}
\keywords{uniformly bounded orthonormal bases, homogeneous polynomials,
quadratic exponential sums, equidecompositions}
\allowdisplaybreaks

\begin{abstract}
We resolve the long-standing problem of Rudin concerning uniformly bounded orthonormal bases of homogeneous polynomials on complex unit spheres. Specifically, we show that for every $K \geq 2$ and $N \geq 0$ there exists a basis of degree $N$ homogeneous polynomials in $\mathbb{C}^K$ that is orthonormal with respect to the surface measure of the unit sphere of $\mathbb{C}^K$ and uniformly bounded by a constant depending only on $K$ on the unit ball of $\mathbb{C}^K$. Bourgain resolved the case $K = 2$ in 1985 and the case $K = 3$ in 2016. No cases $K \geq 4$ were previously known.
\end{abstract}

\maketitle


\section{Introduction}\label{sec:intro}
{
Let  $\mathbb B_K :=\{z\in\cp^K:
 |z_1|^2+\cdots+|z_K|^2<1\}$ be the complex unit ball and $\Sphere^{2K-1}=\partial {\mathbb B_K}$ be the complex unit sphere. Let $H^2(\mathbb B_K)$ {denote} the Hardy space of holomorphic functions} $f:\mathbb B_K\to \cp$ that have radial boundary values in $L^2$, that is, $f(r\,\cdot)$ converges in
$L^2(\Sphere^{2K-1})$ as $r\to1^-$. {In this way, the Hardy space $H^2(\mathbb B_K)$ inherits the Hilbert space structure of $L^2(\Sphere^{2K-1})$ (see \cite{R}). Write $z = (z_1, \ldots, z_K)$ and let
\[
\PolySpace_N :=\operatorname{span}_{\cp}
 \left\{z^j:j\in\z_{\geq 0}^K, \ j_1+\cdots+j_K=N\right\} \quad (z^{j} = z_1^{j_1} \cdots z_K^{j_K})
\]
denote the space of homogeneous polynomials of degree $N$ (note that the dimension $K$ is implied from the context). A routine calculation shows that $z^j$ and $z^l$ with $j\neq l$ are orthogonal in $H^2(\mathbb B_K)$. In particular, we have the following orthogonal decomposition
\[
 H^2(\mathbb B_K)=\widebar{\bigoplus_{N\geq0}\PolySpace_N}.
\]
The following problem is known in the literature as ``Rudin's problem'' (see \cite[p.~61]{GGSTW}), famously proposed by him at a conference at Princeton University in 1983:
\begin{problem}
Let $K \geq 2$.
Can one construct, for every $N\geq 0$, an orthonormal basis
$\{\phi_{1,N},\phi_{2,N}, \ldots, \phi_{\dim \PolySpace_N, N}\}$ 
of $\PolySpace_N$ such that
\[
\sup_{\substack{N \geq 0 \\ 1 \leq i \leq \dim \PolySpace_N}} \| \phi_{i, N} \|_{L^{\infty}(\mathbb B_K)} < \infty
 \ ?
\]
\end{problem}

An important precursor of this question is the work of Ryll and Wojtaszczyk \cite{RW} in 1983, where they constructed for each $N \geq 0$ a polynomial $P_N \in \PolySpace_N$
such that 
$$
\| P_N \|_{L^{\infty}(\mathbb B_K)} \leq C_K \| P_N \|_{L^2(\Sphere^{2K - 1})}
$$
with $C_K > 0$ depending only on $K$. Combined with the work of Olevskii
\cite[Chapter~4]{OlevskiiBook} and Ovsepian--Pe{\l}czy\'nski \cite{OvsepianPelczynski},
this yields for each $K \geq 2$ a uniformly bounded orthonormal basis of $H^2(\mathbb{B}_K)$ consisting of polynomials. However, these polynomials are not homogeneous.  Another construction of a uniformly bounded basis of $H^2(\mathbb{B}_K)$ is due to Rudin
\cite[Appendix~I]{RudinNew}. This relies on Aleksandrov's and L\"ow's solution to the inner function problem 
(see \cite{AleksandrovInner}, \cite{Low}). The construction is non-explicit and its
basis elements are not polynomials. 


Rudin's problem is known to have an affirmative answer when $K = 2$ and
$K = 3$. Both cases are due to Bourgain, respectively in 1985 \cite{Bour1} and
2016 \cite{Bour2}. In higher dimension $K \geq 4$, Shiffman \cite{S} constructed for 
each $N > 0$ an orthonormal uniformly bounded system of cardinality
at least $\beta_K \dim \PolySpace_N$ with $\beta_K > 0$. Marzo and Ortega-Cerd\`a \cite{MO}
subsequently strengthened this to at least
$(1-\varepsilon)\dim\PolySpace_N$ for every $\varepsilon>0$ with a 
uniform bound depending on $K$ and $\varepsilon$. Neither result gives a complete basis.  
 
Our main result is a resolution of Rudin's problem.  

\begin{theorem}\label{thm:onb}
Let $K\geq 2$. Then for every $N\geq 0$
the space $\PolySpace_N$ admits an orthonormal basis
$\{\p_{1, N},\p_{2, N},\ldots,\p_{\dim\PolySpace_N, N}\}$ such that
\[
  \sup_{\substack{N \geq 0 \\ 1\leq i\leq \dim\PolySpace_N}}\|\p_{i, N}\|_{L^\infty(\mathbb B_K)}<\infty.
\]
\end{theorem}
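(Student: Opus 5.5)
Our plan is to generalize Bourgain's scheme for $K=2,3$: decompose the index set $\{j\in\z_{\geq0}^K: |j|=N\}$ (a dilated simplex) into blocks $B$, and on each block take a discrete Fourier transform of the normalized monomials. Write $e_j(z)=z^j/\|z^j\|_{L^2(\Sphere^{2K-1})}$, which form an orthonormal basis of $\PolySpace_N$. If $B$ is (affinely) a box or a lattice progression $\{a+\sum_i n_i v_i : 0\le n_i<M_i\}$ of cardinality $|B|$, then the functions
\[
\p_{B,\xi}(z)=|B|^{-1/2}\sum_{j\in B} c_j\, e\!\left(\langle \xi, n(j)\rangle + Q(n(j))\right) e_j(z),\qquad |c_j|=1,
\]
indexed by $\xi$ in the dual group, are orthonormal and span $\operatorname{span}\{e_j:j\in B\}$. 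Hence any partition of the simplex into such blocks gives an orthonormal basis, and it remains to make every $\p_{B,\xi}$ uniformly bounded. Writing $z_k=r_k e(\theta_k)$ with $\sum r_k^2\le 1$, we have $|e_j(z)|= \big(\tfrac{(N+K-1)!}{(K-1)!\,j!}\big)^{1/2} r^{j}$, a multinomial-type weight that is essentially a Gaussian bump of height $\asymp N^{(K-1)/2}$ in $j$ about the point $Nr^2$ (where $r^2=(r_1^2,\ldots,r_K^2)$), with width $\sqrt N$ in each of $K-1$ directions. The sum over $j\in B$ is thus a weighted exponential sum with phase $\langle\theta, j\rangle+\langle\xi,n\rangle+Q(n)$.

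The key analytic step is a square-root cancellation estimate. Following Bourgain, we insert a quadratic phase $Q$ (e.g.\ $Q(n)=\sum_i n_i^2/ M_i$-type Gauss phases, or $n_i^2 \alpha_i$ with suitable rationals), so that the Gaussian-weighted quadratic exponential sum over any box of side $\sqrt N$ inside $B$ is $O(N^{(K-1)/4})$, uniformly in the linear phase $\theta,\xi$. This is a standard completed Gauss sum / Weyl differencing bound when the blocks have sides $\asymp\sqrt N$ in each direction. Then the total is $|B|^{-1/2}\cdot N^{(K-1)/2}\cdot N^{-(K-1)/2}\cdot N^{(K-1)/4}\cdot(\text{number of }\sqrt N\text{-cells})$. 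Choosing blocks of size $\asymp \sqrt N$ per direction near the Gaussian's center, so $|B|\asymp N^{(K-1)/2}$, gives $N^{-(K-1)/4}\cdot N^{(K-1)/4}=O(1)$; more carefully, after normalizing the weight is $\approx N^{(K-1)/4}\exp(-|j-Nr^2|^2/N)$ per term, and the Gaussian decay handles far cells.

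The main obstacle is geometric and arises at the boundary of the simplex: where some $j_k$ are small, the weight is no longer Gaussian at scale $\sqrt N$ but Poisson-like at scale $\sqrt{j_k}$, and the blocks must shrink accordingly (sides $\asymp\sqrt{j_k}$ in direction $k$), meaning the simplex must be \emph{exactly} tiled by lattice boxes of variable, anisotropic size adapted to the local metric $\sum dj_k^2/j_k$. For $K=2$ this is a one-dimensional partition; for $K=3$ Bourgain managed it by hand; for general $K$ we expect to construct it recursively, by a dyadic decomposition in each coordinate near faces together with an equidecomposition argument showing that every region of the resulting Whitney-type decomposition can be exactly cut into lattice progressions of comparable shape (perhaps allowing a bounded number of ``defect'' blocks handled by parallelepipeds with bounded-distortion generators). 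We would first try an inductive tiling: fix the smallest coordinate $j_K$ in dyadic ranges, tile that range in $j_K$ by intervals of length $\asymp\sqrt{j_K}$, and recurse on the remaining $(K-1)$-simplex, checking that the Gauss-sum bound for each block remains uniform in its aspect ratios.
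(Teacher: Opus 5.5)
\textbf{The approach fails at its first structural choice.} A basis element supported on a block $B$ of side $\asymp\sqrt N$ can never be uniformly bounded, whatever unimodular phases you put on it. The same holds, a fortiori, for your smaller boundary blocks of side $\sqrt{j_k}$.

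Your count $N^{-(K-1)/4}\cdot N^{(K-1)/4}=O(1)$ drops the height of the weight. On the block $|e_j(z)|\asymp N^{(K-1)/4}$; this is your own later normalization, while $N^{(K-1)/2}$ is the height of $|e_j|^2$. So even perfect square-root cancellation gives $|\p_{B,\xi}(z)|\approx|B|^{-1/2}\cdot N^{(K-1)/4}\cdot|B|^{1/2}=N^{(K-1)/4}$.

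This is sharp, not an artifact of the estimate. Take $f=\sum_{j\in B}a_je_j$ with $\sum_j|a_j|^2=1$. Averaging over the angles gives $\int_{[0,1]^K}|f((\sqrt{\rho_s}\,e(\theta_s))_s)|^2\,\dd\theta=\sum_{j\in B}|a_j|^2\binom{N+K-1}{K-1}\binom Nj\rho^j$. Each $\binom{N+K-1}{K-1}\binom Nj\rho^j$ is the density of the Dirichlet distribution with parameters $j_1+1,\ldots,j_K+1$, taken with respect to the uniform probability on the simplex $\{\rho_s\ge0,\ \sum_s\rho_s=1\}$. That uniform probability is the image of $\sigma$ under $z\mapsto(|z_s|^2)_s$, and each coordinate of the Dirichlet distribution has variance at most $1/N$. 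By Chebyshev, half of the $L^2$ mass of $f$ lies on a set of $\sigma$-measure $\ll_K N^{-(K-1)/2}$, so $\|f\|_\infty\gg_K N^{(K-1)/4}$.

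Consequently every basis element must involve monomials spread over the whole simplex at scale $N$. The count that works is $D^{-1/2}\cdot N^{(K-1)/4}\cdot N^{(K-1)/4}=O(1)$ with $D=\dim\PolySpace_N\asymp N^{K-1}$. This is why Bourgain's $K=2$ transform runs over all $N+1$ indices, and why his $K=3$ cut-and-paste maps the whole triangle onto one rectangle. The anisotropic tiling you single out as the main obstacle is therefore not the right problem.

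\textbf{The missing idea is a \emph{global} transform over all of $\{j\in\Z_{\ge0}^K:j_1+\cdots+j_K=N\}$.} Because the simplex is not a box, this needs a piecewise-affine bijection $U_N:\cS_N\to\B_N$ with $O_K(1)$ polytope pieces and bounded linear parts in $\GL_k(\Z)$. This is an $\aff_k(\Z)$-equidecomposition, available when $\lcmk\mid N$, for instance via unimodular triangulations of two polytopes with the same Ehrhart polynomial. The characters $e(U_N(j_1,\ldots,j_k)\cdot D_Nl)$ are then orthogonal, and on each piece they are merely linear phases in $j$.

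The quadratic phase $e(\alpha\sum_sj_s^2)$ is kept in the original coordinates, with $\alpha$ badly approximable. A rational phase $n^2/M$, as you suggest, gives no cancellation (uniformly in the linear phase) on windows shorter than $\sqrt M$, and such short windows are exactly what occurs near the faces.

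Boundedness then rests on a sectional square-root cancellation estimate, uniform in the linear phase. One needs $\sum e(\alpha\sum_sj_s^2+\theta\cdot j)\ll\prod_{s\ne q}H_s^{1/2}$ over a polytope piece cut by $j_1+\cdots+j_K=N$ and intersected with a Poisson window. The window's sides $H_s=(1+Nr_s)^{1/2}$ may lie anywhere between $1$ and $\sqrt N$, and $H_q$ denotes the largest of them. This estimate over unbalanced boxes with arbitrary bounded linear constraints is Theorem~\ref{thm:box}, proved via the balanced case and the decomposition theorem. It is the real number-theoretic core, not a standard Gauss-sum or Weyl-differencing bound.

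Finally, degrees with $\lcmk\nmid N$ need a separate device. The paper inducts on $K$ and uses Olevskii's absorption lemma to absorb the $O_K(N^{K-2})$ face functions, which have size $O(\sqrt N)$.
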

We now make a few comments.

Bourgain asked (\cite[p.~957]{RememberingBourgain}) if one can
place $n$ points in the unit square so that the $1/n$-neighborhood of every
line contains at most an absolute constant number of them.  Bourgain's
motivation was to combine Gaussian beams into uniformly bounded spherical
harmonics. While Theorem~\ref{thm:onb} settles Rudin's problem, it yields
no new information on Bourgain's question.
 
A homogeneous holomorphic polynomial on $\cp^K\simeq\RR^{2K}$ is a
complex-valued spherical harmonic on $\Sphere^{2K-1}$.  One can therefore ask the analogue of Rudin's question on any real sphere $\Sphere^{M}$, with real spherical harmonics of degree $N$ and inner product inherited from the surface area. In this setting the problem is open for any $M\geq 2$. Astonishingly, even the existence of a sequence of harmonics $\{p_N\}_{N\geq 0}$, each homogeneous of degree $N$, and such that $\|p_N\|_{L^\infty(\Sphere^{2K})} \ll_K \|p_N\|_{L^2(\Sphere^{2K})}$ is not known \cite[Corollary~1.4]{S} (the single harmonic case in $\Sphere^{2K-1}$ follows from the work of Ryll and Wojtaszczyk \cite{RW}).

We now {briefly discuss} Bourgain's construction in the cases $K = 2$ and $K = 3$, as it is relevant to our construction. We start with the case $K = 2$. Bourgain's basis is obtained by setting
\[
 \p_l^N(z_1,z_2)=
 \sum_{j=0}^N\sigma_j e\left(\frac{jl}{N+1}\right)\binom{N}j^{1/2} z_1^j z_2^{N-j} \qquad 
(e(z)=e^{2\pi i z})
\]
for $0\leq l\leq N$ and making an appropriate choice of unimodular coefficients $\si_j \in \Sphere^1$. Bourgain chooses $\si_j$ to be the Golay--Rudin--Shapiro signs\footnote{These coefficients appeared in disguised form in Golay's work on
spectroscopy, were found independently in Shapiro's thesis, and were
later rediscovered and published by Rudin
\cite{Golay,Shapiro,RudinRS}.}.  However, the only property that the unimodular coefficients $(\si_j)_{j\geq 0}$ need to possess is exact
square-root cancellation\footnote{{We use the expression} $A \ll_\Lambda B$ {to mean that there is $c>0$}, depending only on the set of parameters $\Lambda$, such that $A \leq c B$. If $\Lambda$ is omitted the constant is numeric. We also write $A \asymp_\Lambda B$ to express that both $A \ll_\Lambda B$ and $B \ll_\Lambda A$ hold.}
\begin{equation} \label{eq:sqrt_intro}
 \sup_{\theta\in\RR}
 \bigg|\sum_{j\in I}\sigma_j e(j\theta)\bigg|
 \ll(\#I)^{1/2}
\end{equation}
for every integer interval $I$. Indeed, if we write $z_1=r^{1/2}{e(\theta)}$ with $0<r<1/2$, Stirling’s inequalities imply that the weights in $\p_{l}^{N}$ are concentrated in $Nr + O(1 + \sqrt{N r})$ with exponential decay outside of this region. Specifically,
\[
\binom{N}j^{1/2}r^{j/2}(1-r)^{(N-j)/2} \ll  H^{-1/2} \exp \Big ( - \frac{1}{4} \cdot \frac{|j - N r|}{H} \Big )  \ \ \text{with} \ \ H := (1 + N r)^{1/2}.
\]
Therefore, using summation by parts we see that \eqref{eq:sqrt_intro} suffices to conclude that $\p_l^{N}$ is bounded. 
There are in fact many sequences $\si_j$ that have the property \eqref{eq:sqrt_intro}, although we note that they are not ``generic'': a sequence of random independent and identically distributed signs does not have this property. A theorem of Hardy and Littlewood \cite[Theorems~2.141 and~2.181]{HL} shows that when $\alpha$ is an irrational with bounded partial quotients, e.g., when $\alpha = \sqrt{d}$ with $d$ a positive square-free integer, one has
\[
 \sup_{\theta\in\RR}
 \bigg|\sum_{j\in I}e(\alpha j^2+j\theta)\bigg|
 \ll_\alpha(\#I)^{1/2}.
\]
Consequently, Bourgain's $K=2$ construction works equally well with the choice $\sigma_j=e(\sqrt{2} j^2)$.  {In 2016 Bourgain}  \cite{Bour2} solved the full homogeneous-basis problem for $K=3$.  The exponent set is now the triangle $\{(j_1,j_2)\in\z_{\geq0}^2:j_1+j_2\leq N\}$, the third component $j_3$ being uniquely determined as $N - j_1 - j_2$.
Bourgain separates the edges with $j_1j_2=0$ and uses the previous basis there. On the remaining vertices he rearranges the triangle $j_1 + j_2 \leq N$ with $j_1 j_2 \neq 0$ into a rectangle by a direct ``cut and paste'' construction (to ensure orthogonality), and then exploits the analogue of \eqref{eq:sqrt_intro} for the sequence $e(\sqrt{2} (j_1^2 + j_2^2))$. Given the Hardy and Littlewood theorem mentioned above, it begs the question whether one can simply stick with the unimodular coefficients 
\[
e\bigl(\sqrt{2} (j_1^2 +\cdots+j_K^2)\bigr)
\]
generically in every dimension. {This is exactly what we will show can be accomplished.} Compared to Bourgain's construction one complication that emerges in higher dimension is that we will need a multi-variable analogue of \eqref{eq:sqrt_intro} over an arbitrary polytope. The analogue of the ``cut-and-paste'' construction (needed for orthogonality) increases in complexity and we can no longer map $j_1 + \cdots + j_K \leq N$ directly to a simple rectangular region, which makes the relevant exponential sum estimates harder.

\subsection{Notations and Definitions}

We will adhere to a set of conventions and definitions throughout the paper. 
Let $k \geq 1$.
For a finite set $L\subset\Z^k\times\Z$, whose elements we call
\emph{constraints}, put
\[
\cP(L):=\{n\in\Z^k : v\cdot n+b\ge0 \text{ for every }(v,b)\in L\}.
\]
A \emph{$\Z$-polytope} is a set of the form $\cP(L)$. 

A \emph{box} $J\subset \z^k$ is a product of finite intervals, i.e., 
$J=J_1\times\cdots\times J_k$ with $J_i = [a_i,b_i] \cap \mathbb{Z}$. 
For a box $J$ and $w = (w_1, \ldots, w_k) \in\z^k$, put
\[
  \len(w;J):=\max\Bigl(1,\max_{j : w_j \neq 0}\#J_j\Bigr).
\]
Thus $\len(w;J)$ is the longest side seen by $w$. Define
\[
\vol(J) := \prod_{i = 1}^{k} \# J_i \quad \text{and} \quad \sect(w;J):=\frac{\vol(J)}{\len(w;J)}.
\] 

Given an irrational number $\al$ write $\alpha := [a_0; a_1, a_2, \ldots]$ for the continued fraction
expansion of $\al$. We say that $\al$ has bounded partial quotients when there exists a $C > 0$ such
that $a_i \leq C$ for all $a_i$.

Finally, for any symbol $\Box$ whatsoever, whenever we write $\Box \in F^k$ with $F$ a set 
we will denote by $\Box_j$ the $j$th component of $\Box$ so that $\Box = (\Box_1, \ldots, \Box_k)$.   
Thus if $s \in \mathbb{Q}^k$ then we automatically infer that $s_2$ refers to the second component of $s$,
i.e., $s = (s_1, s_2, \ldots, s_k)$. For any vector $v \in F^k$ we will also denote by $\supp v$ the set of
indices $j$ for which $v_j \neq 0$. For a real vector $v$ we write $|v|_1=\sum_j|v_j|$ and
$|v|_\infty=\max_j|v_j|$, and for a real matrix $A$ we write $|A|_\infty$ for the largest absolute
value of an entry of $A$.

\subsection{Outline}\label{sec:outline}

We first start with the main novel {number-theoretic} contribution of this paper, which will be the engine behind our deterministic construction. 

\begin{theorem}\label{thm:box}
Let {$k\geq1$,} $r\ge 0$ and $V,h,a\ge1$. Let $\alpha = [a_0; a_1, a_2, \ldots]$ be
an irrational number with $a_i \leq a$ for all $i \geq 1$.  
Let $c_1,{\ldots},c_k$ be positive integers with
$c_j\le h$. Let $I=I_1\times\cdots\times I_k$
be a box of nonempty intervals. Let
$L$ be a set of at most $r$ pairs $(v,b)\in\mathbb Z^k\times
\mathbb Z$ with $|v|_1\le V$. Then
\[
\sup_{\theta \in \RR^k} \, \Bigl|\sum_{n\in \cP(L) \cap I}e\Bigl(\alpha\sum_{j=1}^kc_jn_j^2
+\theta\cdot n\Bigr)\Bigr|
\ \ll_{k,r,V,h,a}\ \vol(I)^{1/2}.
\]
If both $(w_0,b_0)$ and $(-w_0,-b_0)$ belong to $L$, with
$w_0\ne0$, then also
\[
\sup_{\theta \in \RR^k} \, \Bigl|\sum_{n\in \cP(L) \cap I}e\Bigl(\alpha\sum_{j=1}^kc_jn_j^2
+\theta\cdot n\Bigr)\Bigr|
\ \ll_{k,r,V,h,a}\ \sect(w_0;I)^{1/2}.
\]
\end{theorem}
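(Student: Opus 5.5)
Our plan is to argue by induction on the dimension $k$ and on the number $r$ of constraints, using one-dimensional Hardy--Littlewood as the base engine. The sum over a box with no constraints factors as a product of one-dimensional sums $\sum_{n_j\in I_j} e(\alpha c_j n_j^2+\theta_j n_j)$. Each factor is $\ll_{h,a}(\#I_j)^{1/2}$ uniformly in $\theta_j$ by the Hardy--Littlewood theorem quoted in the introduction. That theorem is applied to $c_j\alpha$, whose partial quotients are bounded in terms of $a$ and $h$, since multiplying by a bounded integer preserves bounded type with controlled constants. This gives the first bound when $L=\emptyset$.

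For general $L$ we remove constraints one at a time. Fix $(v,b)\in L$ and pick a coordinate $i\in\supp v$ with $v_i\neq 0$. We split the sum according to the residue of $n_i$ modulo a suitable multiple of $|v_i|$, so that each sub-sum lives on an arithmetic progression. The quadratic phase restricted to such a progression is again of the form $\alpha c' m^2+\theta' m$ with $c'\ll_{V,h}1$, and there are only $O_V(1)$ residue classes. On each class we fix the remaining coordinates $n'$. The constraint $v\cdot n+b\ge 0$ then says that $n_i$ lies in a half-line whose endpoint depends affinely on $n'$. We write the indicator of this half-line through the partial sums in the $n_i$ variable: the one-dimensional sum over an interval whose endpoint varies with $n'$ is expressed by Fourier expansion of the cutoff, $\mathbf 1_{m\le x}=\int \widehat{\ }\ldots$, or more robustly by a completion argument. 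Concretely, we write $\sum_{m\in I_i, m\le x(n')}e(\phi(m))=\int_0^1 S_{I_i}(\phi,\beta)\,D_{x(n')}(\beta)\,d\beta$ with a Dirichlet-type kernel. The $n'$-dependence then enters as an extra linear phase $e(\beta v_{\hat i}\cdot n'/v_i)$. This reduces the problem to $(k-1)$- or same-dimensional sums with $r-1$ constraints, uniformly in the shifted frequency. The cost is the $L^1$ norm of the kernel, which is logarithmic. That logarithmic loss is exactly the obstacle.

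The main difficulty is therefore avoiding the $\log(\#I_i)$ loss from completing the sharp cutoff. To remove it we would exploit the fact that Hardy--Littlewood gives square-root cancellation at every scale and every frequency. We decompose the range of $n_i$ dyadically near the boundary hyperplane, in Whitney fashion. Slabs at distance $\sim 2^\ell$ from the face $v\cdot n+b=0$ are unions of boxes with side $2^\ell$ in the $i$ direction. There the cutoff is either trivial or handled by smooth partition pieces. Summing square-root bounds over these pieces yields a geometric series bounded by the full volume to the $1/2$ power, not a logarithm. The key counting point is that the number of Whitney boxes of scale $2^\ell$ times $(2^\ell\cdot\text{cross-section})^{1/2}$ sums geometrically. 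Verifying this counting, and controlling the tilted faces via an induction on the subbox of the $n'$ coordinates (where again at most $r-1$ constraints appear), is where we expect most of the work.

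For the second bound, the pair $(w_0,b_0),(-w_0,-b_0)$ forces $w_0\cdot n=-b_0$, a hyperplane slice. Choose $i$ with $w_{0,i}\neq 0$ and $\#I_i=\len(w_0;I)$. We solve for $n_i$ as an affine function of the other coordinates in $\supp w_0$, restricted to a congruence class modulo $|w_{0,i}|$. Substituting gives a sum over $n'\in\z^{k-1}$ of a quadratic form that is no longer diagonal, since the term $c_i n_i^2$ contributes cross terms. We handle this by a second application of the induction on the lattice of solutions, changing variables to diagonalize with bounded coefficients where possible, or alternatively by Cauchy--Schwarz in one variable followed by the diagonal bound. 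The first bound, applied in dimension $k-1$ to the box $\prod_{j\ne i}I_j$ with the translated constraints, gives $\ll \big(\vol(I)/\#I_i\big)^{1/2}=\sect(w_0;I)^{1/2}$. The constraint that $n_i\in I_i$ becomes an additional constraint with bounded $|v|_1$, so it is absorbed into $L$.
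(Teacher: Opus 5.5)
There is a genuine gap at exactly the step you flag, and the proposed repair cannot work. Your Whitney scheme still bounds each piece by its own square-root estimate and adds absolute values. No partition of a region with a tilted face into coordinate boxes survives that.

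Take $k=2$ and the triangle $\{n\in[0,H]^2:\ n_1+n_2\le H\}$. Let $R$ be a box with sides $s\le t$, and let $d_0$ be the depth of its top corner below the line $n_1+n_2=H$. Then $R$ meets the line $n_1+n_2=H-(d_0+j)$ in at most $\min(j+1,s)$ points. Hence
$\sum_{0\le d\le H/2}(d+1)^{-1}\,\#\bigl(R\cap\{n_1+n_2=H-d\}\bigr)\le s\bigl(1+\log(1+t/s)\bigr)\ll (st)^{1/2}$.
The same weighted count over the whole triangle is $\sum_{d\le H/2}(H-d+1)/(d+1)\asymp H\log H$. So every partition into coordinate boxes has $\sum_R\vol(R)^{1/2}\gg H\log H$. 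In particular each dyadic Whitney scale contributes $\asymp H$, and the series is not geometric.

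For $k=3$ and the face $n_1+n_2+n_3=H$ it is worse. A box inside the simplex contains at most $4$ points with $n_1+n_2+n_3\ge H-1$, so one needs $\gg H^2\gg H^{3/2}=\vol(I)^{1/2}$ boxes. Boxes that are thin in the $i$-direction but have full cross-section fit in a slab only when $v$ is supported on $\{i\}$, i.e. when the constraint is just a side of $I$. So the sharp bound needs cancellation \emph{across} the face, and nothing in your plan supplies it.

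The paper obtains this cancellation on the Fourier side. In the balanced case (Theorem~\ref{thm:general} via Proposition~\ref{prop:multiwall}) it proceeds as follows:
\begin{enumerate}
\item It takes a convergent $a/q$ of $\alpha$ with $q$ comparable to the common side length.
\item It expands every constraint indicator through $\operatorname{sgn}$ modulo $2q$, with coefficients $\widehat c_{2q}(v)\ll 1/|v|$.
\item It applies Poisson summation in every coordinate, producing Gauss sums $G(c_ja,\ell_j+g_j(v);q)$.
\item By Lemma~\ref{lem:gaussphase} these carry the phase $e\bigl(-\overline{a}(\ell_j+g_j(v))^2/(4c_jq)\bigr)$, which is quadratic in the dual variable $v$.
\item Since $\overline{a}/q$ is badly approximable below scale $q$, Lemma~\ref{lem:weylgauss} gives $\ll J^{1/2}2^{-J/2}$ on each dyadic block $|v|\sim 2^J$ instead of $\asymp 1$, and the sum over $J$ converges.
\end{enumerate}
This last step is precisely where your logarithm disappears. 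The argument needs all sides comparable, so the paper pairs it with Theorem~\ref{thm:decomp}. That result reduces arbitrary boxes to balanced pieces by splitting along inequalities that mix short and long coordinates, and by integral substitutions $x=u+\Phi y$ that keep $\sum_jc_jx_j^2$ diagonal.

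For the second bound your idea of making $w_0\cdot n$ a coordinate is the paper's. However, substituting for $n_i$ leaves a non-diagonal form, to which your first bound does not apply. Cauchy--Schwarz followed by differencing reintroduces a logarithmic loss. What is needed is the construction of Lemma~\ref{lem:short-coordinate} with $H=1$:
\begin{enumerate}
\item Run rational Gram--Schmidt for $\langle\xi,\eta\rangle=\sum_j\xi_j\eta_j/c_j$ starting from $w_0$.
\item Order the coordinates by decreasing side length, so that the new ranges stay $\ll\#I_j$.
\item Scale to an integral matrix and split into cosets.
\end{enumerate}
Even then, the resulting piece is a constrained, possibly unbalanced box, so this part still rests on the full first bound.
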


Theorem \ref{thm:box} is relatively easy to prove when the box is \textit{balanced}, that is,
$\# I_i \asymp \# I_j$ for all $1 \leq i, j \leq k$. This is Theorem~\ref{thm:general}, proved in
Section~\ref{sec:quadratic}. In Section~\ref{sec:decomp} we prove a decomposition theorem which
reduces the general case to the balanced one by splitting, adding and subtracting the conditions
in $L$. The two are combined into a proof of Theorem \ref{thm:box} in Section~\ref{sec:proofbox}.

In Section~\ref{part:one} we construct the basis, conditional on Theorem~\ref{thm:box} and on an
\textit{equidecomposition} of the polytope $j_1 + \cdots + j_{K - 1} \leq N$ into a rectangular box
which we now describe. This is the higher-dimensional analogue of Bourgain's
``cut and paste'' construction. 
This construction is done only when
$$
\lcmk := \text{lcm}(1,2,\ldots, k)
$$
divides $N$. In the remaining cases such an equidecomposition is not needed as the basis can be constructed recursively using an absorption trick. Before we proceed further let us define the meaning of \textit{equidecomposition}. The following definition is inspired by the notions of equidecomposition in \cite{HNP,HM08,KKMS}, but adapted to our needs.

\begin{definition}\label{def:equicomp}
Let $\aff_k(\Z) = \GL_k(\Z) \ltimes \Z^k $ be the group of affine transformations of the form $x\in \RR^k \mapsto Ax+b \in \RR^k$, where $A \in \GL_k(\Z)$ and $b\in \Z^k$. Let $P,Q\subset \mathbb Z^k$ be $\Z$-polytopes. We say that $P$ and $Q$ are \emph{$\aff_k(\Z)$-equidecomposable with complexity bounded by $R \geq 1$} if there is a disjoint collection of $\Z$-polytopes $\cP(L_1), \ldots, \cP(L_r)$ and aﬃne transformations $A_1 x + b_1,\ldots, A_r x + b_r \in \aff_k(\z)$ such that
\[
P = \bigsqcup_{i=1}^r {\cP(L_i)} \quad \text{and}  \quad Q = \bigsqcup_{i=1}^r (A_i \cP(L_i) + b_i),
\]
where
\[
r\leq R \ , \ \forall i: \# L_i \leq R \ , \ \forall i: |A_i|_\infty \leq R \ \ \text{and}\ \  \forall i \forall (v,b) \in L_i:   |v|_1 \leq R.
\]
\end{definition}
Notice that whenever two polytopes $P$ and $Q$ are equidecomposable as above we can write
a bijection $U : P \rightarrow Q$, piecewise affine, and defined by 
$$
U_{|\cP(L_i)}(x) = A_i x + b_i.
$$
Define the following $\Z$-polytopes
\begin{align}\label{eq:originalSimplexAndBox}
\cS_N&:=\left\{j\in \Z_{\ge0}^k:
  j_1+\cdots+j_k\le N\right\} \quad \text{and} \quad
\B_N:=\prod_{s=1}^k \bigg\{0,1,\ldots, \frac{N}{k+1-s}\bigg\},
\end{align}
where $\B_N$ is only defined when $\lcmk \mid N$.

\begin{proposition}\label{prop:UNmap}
Let {$N\geq0$ with} $\lcmk \mid N$. Then the $\Z$-polytopes $\cS_N$ and $\B_N$ are $\aff_k(\Z)$-equidecomposable with complexity $\ll_k 1$. Specifically, for each $N$, there exists a decomposition, 
$$
\cS_N := \bigsqcup_{i=1}^r {\cP(L_i)}
$$
with $r \ll_k 1$, $\#L_i \ll_k 1$ for each $i$, and 
$$
\sup_{\substack{1 \leq i \leq r \\ (v,b) \in L_i}} |v|_1 \ll_k 1,
$$
and a bijection $U_N : \cS_N \to \B_N$ such that ${U_N}_{|\cP(L_i)} = A_i x + b_i$
with $|A_i|_{\infty} \ll_k 1$ for each $i$.
\end{proposition}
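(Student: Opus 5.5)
The plan is to induct on $k$, and the base case $k=1$ is trivial since $\cS_N=\B_N=\{0,\ldots,N\}$. Write $\cS^{(k)}_N$ and $\B^{(k)}_N$ to record the dimension. The counting identity
\[
\#\cS^{(k)}_N=\binom{N+k}{k}=\prod_{m=1}^{k}\Bigl(\frac Nm+1\Bigr)=\Bigl(\frac Nk+1\Bigr)\#\cS^{(k-1)}_N
\]
suggests the following key step.

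\textbf{Key step.} Show that $\cS^{(k)}_N$ is $\aff_k(\Z)$-equidecomposable, with complexity $\ll_k 1$, with the prism
\[
\cS^{(k-1)}_N\times\{0,1,\ldots,N/k\}.
\]
Here the height coordinate is placed in the first slot, so that it matches the first factor of $\B_N$.

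Granting this, we finish as follows. Since $\lcmk\mid N$ implies $\operatorname{lcm}_{k-1}\mid N$, the induction hypothesis gives a piecewise affine bijection $U^{(k-1)}_N:\cS^{(k-1)}_N\to\B^{(k-1)}_N$. Applying it fiberwise to the prism, via the maps $(t,y)\mapsto(t,A_iy+b_i)$, lands exactly in $\B^{(k)}_N$, because $\B^{(k)}_N=\{0,\ldots,N/k\}\times\B^{(k-1)}_N$. To compose the two equidecompositions, pull back each constraint set $L_i$ of the second decomposition through the affine pieces of the first. Each pullback is again a $\Z$-polytope, and its constraints have $|v|_1\ll_k 1$ because all matrices have bounded entries. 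The numbers of pieces multiply, and all the resulting matrices lie in $\GL_k(\Z)$ with bounded entries. So the total complexity stays $\ll_k 1$.

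\textbf{Model case $k=2$ (Bourgain's cut and paste).} Split $\{a+b\le N\}$ into the bottom part $\{b\le N/2\}$ and the top part $\{b\ge N/2+1\}$. On the top part apply the map $(a,b)\mapsto(N-a,\,N+1-b)$, which has matrix $-I\in\GL_2(\Z)$. The top part is sent onto $\{b'\le N/2,\ N+1-b'\le a'\le N\}$. Together with the bottom part $\{b\le N/2,\ a\le N-b\}$ this tiles the box $\{0\le a\le N,\ 0\le b\le N/2\}$ exactly.

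\textbf{General $k$.} I would imitate the model case. Consider the slices $\{j_k=t\}$, each of which is a copy of $\cS^{(k-1)}_{N-t}$. Keep the slices with $t\le N/k$, and peel off the part of $\cS^{(k)}_N$ with $j_k>N/k$. Then map that peeled part, with boundedly many unimodular pieces, into the "missing" region
\[
\{(t,y): 0\le t\le N/k,\ y\in \cS^{(k-1)}_N\setminus \cS^{(k-1)}_{N-t}\}.
\]
To find such maps I would use the homogeneous coordinate $j_0=N-|j|_1$, which identifies $\cS^{(k)}_N$ with $\{(j_0,\ldots,j_k)\ge0,\ \sum_i j_i=N\}$. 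In these coordinates there is a natural symmetric group of unimodular affine maps, namely the coordinate permutations. The reflection used for $k=2$ should generalize to a combination of a permutation of $(j_0,\ldots,j_k)$ with a shift by $N+1$ in one coordinate. I would then decompose the peeled region according to which coordinates exceed thresholds $N/m$, with ties broken lexicographically so that the pieces are disjoint $\Z$-polytopes. Each piece is cut out by boundedly many constraints with $|v|_1\ll_k 1$, and only boundedly many pieces are needed.

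\textbf{Main obstacle.} The hard step is the explicit construction of the pieces and maps in the key step for general $k$, and the verification that the images are disjoint and exactly fill the prism. For the filling, counting is enough once injectivity and containment are checked, since the cardinalities agree by the identity above. I would first work out $k=3$ by hand to guess the pattern of the pieces. I would then prove the general version by a secondary induction on the height threshold, checking lattice-point equalities slice by slice in $j_k$. Here the divisibility $\lcmk\mid N$ is exactly what keeps all the thresholds $N/m$ integral, so that no boundary layers are lost.
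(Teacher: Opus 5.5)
Your reduction is sound, and it has the same architecture as the paper's constructive proof. The paper builds a fold $\Phi_{k,N}:\cS_{k,N}\to\{0,\ldots,N/k\}\times\cS_{k-1,N}$. It then sets $U_{k,N}(x)=(l_1,U_{k-1,N}(l_2,\ldots,l_k))$ where $l=\Phi_{k,N}(x)$, and pulls the constraints of the $(k-1)$-dimensional pieces back through the fold, exactly as you propose. Your $k=2$ cut-and-paste with $-I$ is also correct.

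The gap is that your key step is the entire content of the proposition for $k\ge 3$, and you do not prove it. For general $k$ you offer only a plan: the reflection ``should generalize'' to a permutation of $(j_0,\ldots,j_k)$ plus a shift, the peeled region should be cut at thresholds $N/m$, and working out $k=3$ would reveal the pattern. There are no explicit pieces and no explicit maps, so there is nothing whose injectivity and containment could be checked. Your counting remark only closes the argument once those are in hand. As written, the proposition is established only for $k\le 2$.

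What is missing is a concrete fold of the peeled simplex into the hole. Your normalization is not the problem. After relabeling $j_k$ as $x_1$, your peeled region is the paper's $\{x_1>N/k\}$. The unimodular involution $(t,y)\mapsto(t,N-|y|_1,y_2,\ldots,y_{k-1})$ of the prism swaps $y_1$ with the homogeneous coordinate. It carries your hole $\{|y|_1>N-t\}$ onto $\{y_1<t\}$, which is precisely what the paper leaves uncovered after shearing the slab $x_1\le N/k$ by $x\mapsto(x_1,x_1+x_2,x_3,\ldots,x_k)$.

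The paper fills that hole as follows. It cuts $\{x_1>N/k\}$ into pieces $\C_{k,N,m}$, $2\le m\le k$, according to the first $m$ with $x_1+\cdots+x_m\le mN/k$. Each piece is mapped by an explicit map in $\aff_k(\Z)$ with $\{0,1\}$ matrix entries, whose first coordinate is $x_1+\cdots+x_m-(m-1)N/k$. So the cuts compare partial sums with multiples of $N/k$, not single coordinates with $N/m$, and the maps are not permutations of homogeneous coordinates.

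Alternatively, you can avoid any construction, as in the paper's first proof. The real polytopes $\{x\ge0: x_1+\cdots+x_k\le\lcmk\}$ and $\prod_s[0,\lcmk/(k+1-s)]$ have the same Ehrhart polynomial. Both admit unimodular triangulations, since their facet normals are totally unimodular. Hence they split into the same numbers of open unimodular $m$-simplices, and these are $\aff_k(\Z)$-equivalent dimension by dimension. Dilating by $N/\lcmk$ and intersecting with $\Z^k$ yields $U_N$ with complexity depending only on $k$.
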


We prove the existence of $U_N$ in Section \ref{sec:explicitEquidecomposition} in two distinct ways: one implicit, but straightforward and another explicit and constructive, but a bit more involved. The benefit of the constructive proof is that it {provides an explicit recipe} to build the basis which we shall define below. The first one is a direct application of triangulations into open unimodular simplices and Ehrhart polynomial theory. Indeed, since the real polytopes $S=\{x\in {\RR_{\geq0}^k} : x_1+\cdots+x_k \leq \lcmk\}$ and  $B=\prod_{s=1}^k [0,\lcmk/(k+1-s)]$ have the same Ehrhart polynomial (i.e. $\vol(tS \cap \Z^k)=\vol(t B \cap \Z^k)$ for all $t\in \Z_{\geq 0}$) {and both admit unimodular triangulations,} they are equidecomposable by $ \aff_k(\z)$ transformations, which implies their respective integer sets are $\aff_k(\Z)$-equidecomposable with complexity bounded {independently} of $N$.  {We then use the identities $\cS_N = (\frac{N}{\lcmk}S)\cap\Z^k$ and $\B_N = (\frac{N}{\lcmk}B)\cap\Z^k$, and dilate the real pieces before intersecting with the lattice.}

Recall that {the} $H^2(\mathbb B_K)$ space is equipped with the following
inner product 
\begin{align*}
 \langle f,g\rangle
 :=
 \int_{\Sphere^{2K-1}}f(z)\ov{g(z)}\,\dd\sigma(z),
\end{align*}
where $\si$ is the induced Lebesgue surface measure on $\Sphere^{2K-1}$ normalized so that $\si (\Sphere^{2K-1})=1$.
A routine computation (see \cite{R}) shows that for any $j,l \in \z_{\geq 0}^K$ {with $N=j_1+\cdots+j_K=l_1+\cdots+l_K$} we have
\begin{equation}\label{def:innerprod}
 \langle z^j,z^l\rangle
 =\binom{N+K-1}{K-1}^{-1}\binom Nj^{-1}{\bo \delta}_{j,l},
 \quad \text{where} \quad
 \binom Nj:=\frac{N!}{j_1!\cdots j_K!}.
\end{equation}

The following theorem, proved in Section~\ref{part:one}, constructs the basis conditional on
Theorem~\ref{thm:box} and Proposition~\ref{prop:UNmap}.

\begin{theorem}\label{thm:onbexplicit}
Let $K\ge2${, and} put $k=K-1$. Let $a\ge1$, and let $\alpha = [a_0; a_1, a_2, \ldots]$ be an
irrational number with $a_i \leq a$ for all $i \geq 1$. 
For each $N\ge1$, with $\lcmk\mid N$, let $U_N$ be the map given by Proposition \ref{prop:UNmap}.
Then:
\begin{enumerate}
\item Let $D_N = {\rm diag}(\frac{1}{N/k+1}, \frac{1}{N/(k-1)+1},\ldots, \frac{1}{N+1})$ and define for each $l \in \B_N$
  \begin{equation}
  \p_{l}(z):=
  \sum_{\substack{j\in\mathbb Z_{\ge0}^K\\j_1+\cdots+j_K=N}}
  \binom Nj^{1/2}z^j
  e\left(\alpha\sum_{s=1}^Kj_s^2
  +U_N(j_1,\ldots,j_k)\cdot D_N l\right),
  \label{eq:originalBasis}
  \end{equation}
 where
  $\binom Nj=N!/(j_1!\cdots j_K!)$.
  Then $\{\p_{l}:l\in\B_N\}$ is an orthonormal basis
  of $\PolySpace_N${.}
\item Assuming the estimate in Theorem~\ref{thm:box},
  \[
  \sup_{\substack{N\ge1\\ \lcmk\mid N}}
  \ \sup_{l\in\B_N}
  \|\p_{l}\|_{L^\infty(\Sphere^{2K-1})}\ll_{K,a}1{.}
  \]
\item The divisibility condition can be
  removed: assuming Theorem~\ref{thm:box}, then, for every $N\ge0$, the space $\PolySpace_N$ admits an orthonormal basis
  $(\psi_\nu)_{1\leq \nu\le\dim\PolySpace_N}$ satisfying
  \[
    \sup_{1\leq \nu\le\dim\PolySpace_N}
    \|\psi_\nu\|_{L^\infty(\Sphere^{2K-1})}\ll_{K,a}1.
  \]
\end{enumerate}
\end{theorem}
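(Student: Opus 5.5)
For part (1), I would first rewrite $\p_l$ using the orthogonality relation \eqref{def:innerprod}. Set $e_j:=\binom{N+K-1}{K-1}^{1/2}\binom Nj^{1/2}z^j$ for $j_1+\cdots+j_K=N$. These vectors form an orthonormal basis of $\PolySpace_N$, and the multi-indices $j$ are in bijection with $(j_1,\ldots,j_k)\in\cS_N$. In this basis the coefficient matrix of $\{\p_l\}$ is
\[
M_{j,l}=\binom{N+K-1}{K-1}^{-1/2}\,\sigma_j\,e\bigl(U_N(j')\cdot D_N l\bigr),\qquad j'=(j_1,\ldots,j_k),\ |\sigma_j|=1 .
\]
Since $U_N:\cS_N\to\B_N$ is a bijection, reindex by $m=U_N(j')\in\B_N$. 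The matrix then becomes a unimodular diagonal matrix times the character table $e(m\cdot D_N l)=\prod_s e\bigl(m_sl_s/(N/(k+1-s)+1)\bigr)$ of the finite abelian group $\prod_s\Z/(N/(k+1-s)+1)\Z$, whose index set is exactly $\B_N$. The character table divided by $\vol(\B_N)^{1/2}$ is unitary. We have $\vol(\B_N)=\#\cS_N=\dim\PolySpace_N=\binom{N+K-1}{K-1}$, so the normalization matches and $M$ is unitary. This proves orthonormality and completeness.

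For part (2), take $z\in\Sphere^{2K-1}$; the maximum principle handles the ball. Write $z_s=r_s^{1/2}e(\theta_s)$ with $\sum r_s=1$. By symmetry we may assume $r_K\ge 1/K$, and then use $j_K=N-j_1-\cdots-j_k$ to eliminate $j_K$. The phase is $\alpha(\sum_{s\le k}j_s^2+(N-|j'|_1)^2)$ plus linear terms. This is a quadratic form in $j'$ that is not diagonal, so I would pass to $m=U_N(j')$ piece by piece. On each piece $\cP(L_i)$ the map is $j'=A_i^{-1}(m-b_i)$, and the quadratic part becomes $\alpha Q_i(m)$ with integer coefficients bounded in terms of $k$. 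Theorem~\ref{thm:box} requires a diagonal form $\sum c_jn_j^2$, so instead I would keep the original variable $j'$, where the nonlinear part is $\alpha\sum_{s\le k}j_s^2+\alpha(N-|j'|_1)^2$. The cross term $\alpha|j'|_1^2$ is the obstacle. I would try to remove it by fixing $t=|j'|_1$: add the pair of constraints $(\pm(1,\ldots,1),\mp t)$ to $L_i$. The phase is then diagonal in $j'$, the piece is still a $\Z$-polytope $\cP(L_i)$, and the extra linear phase $U_N(j')\cdot D_Nl=(A_ij'+b_i)\cdot D_Nl$ is absorbed into $\theta$.

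The amplitude $\prod_s\binom Nj^{1/2}r_s^{j_s/2}$ is handled as in the $K=2$ case. Stirling shows it is $\ll\prod_{s\le k}H_s^{-1/2}$ times exponential decay away from $j_s\approx Nr_s$, with widths $H_s=(1+Nr_s)^{1/2}$. For $r_K\ge1/K$ the decay in $t$ has width $\asymp(1+N(1-r_K))^{1/2}$, comparable to the largest $H_s$. I would cover the space by dyadic shells: boxes $I$ of side lengths $\asymp 2^{\nu_s}H_s$ centered at $Nr_s$. On each box, multidimensional summation by parts (the amplitude is smooth at scale $H_s$, so its mixed differences are controlled) reduces the problem to sums over $\cP(L_i\cup\{\text{slice}\})\cap I'$ for sub-boxes $I'$. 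For fixed $t$, the second bound of Theorem~\ref{thm:box} with $w_0=(1,\ldots,1)$ gives $\sect(w_0;I)^{1/2}$. Summing over $t$ costs a factor $\len(w_0;I)$ against the $t$-decay of width $\asymp\len$, which nets out. In total each shell contributes $\ll\prod_s(2^{\nu_s})^{1/2}e^{-c2^{\nu_s}}$, which sums to $O_{K,a}(1)$. I expect this bookkeeping to be the main obstacle: controlling the $t$-sum against $\sect$ and doing summation by parts over polytope-restricted sums (the sub-boxes $I'$ keep the form $\cP(L)\cap I'$, so Theorem~\ref{thm:box} still applies).

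For part (3), I would use absorption. Write $N=N_0+q$ with $\lcmk\mid N_0$ and $0\le q<\lcmk$. Split the monomials by the exponent of $z_K$. Those with $j_K\ge q$ are of the form $z_K^q\cdot\PolySpace_{N_0}$-type, handled by a modified basis in which the weight $\binom Nj^{1/2}$ replaces $\binom{N_0}{j}^{1/2}$; the same proof applies with a shifted $j_K$. Those with $j_K<q$ are $z_K^{j_K}$ times degree-$(N-j_K)$ polynomials in $k$ variables. The latter can be handled recursively in $K$, starting from the $K=2$ case, since multiplying by a bounded power of $z_K$ with the correct binomial normalization keeps boundedness. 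The two groups are orthogonal because their supports are disjoint monomials.
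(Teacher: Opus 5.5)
Part (1) is correct and matches the paper's argument (Proposition~\ref{prop:shiftedONS} with $b=0$). Parts (2) and (3) each have a genuine gap.

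\textbf{Part (2).} Fixing $t=|j'|_1$ and summing the slice bounds over $t$ in absolute value throws away the cancellation in the eliminated variable $j_K=N-t$. The bookkeeping then does not net out. Take a tile on which $j_s$ ($s\le k$) ranges over $\asymp H_s$ values, and put $L=\max_{s\le k}H_s$. The per-$t$ contribution is a product of three factors:
the Fourier mass of the localized amplitude in $j'$, which is $\prod_{s\le k}H_s^{-1/2}$;
the slice bound $\sect(w_0;J)^{1/2}=(\prod_{s\le k}H_s/L)^{1/2}$;
and $c_N\,q_{Nr_K}(N-t)\asymp N^{1/4}H_K^{-1/2}\asymp1$ on the relevant range, since $r_K\ge1/K$.
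This gives $L^{-1/2}$ per $t$. There are $\asymp L$ admissible values of $t$, so your bound is $\asymp L^{1/2}$, which reaches $N^{1/4}$ when $r_1=r_K=1/2$. For $K=2$ each slice is a single point, so your scheme is literally the triangle inequality.

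The remedy is not to eliminate $j_K$ at all. Stay in $\Z^K$, where $\alpha\sum_{s=1}^Kn_s^2$ is already diagonal, lift the constraints of $\cP(L_i)$ to $\Z^K$, and add the pair $(\pm\mathbf 1,\mp N)$. Theorem~\ref{thm:box} in dimension $K$ with $w_0=\mathbf 1$ then gives $\sect(\mathbf 1;J_m)^{1/2}\ll\prod_{s\ne q}H_s^{1/2}$, where $q$ now maximizes $H_s$ over all $K$ coordinates. Against the Fourier mass $\prod_{s=1}^KH_s^{-1/2}$ of the factorized amplitude $c_N\prod_{s=1}^Kq_{Nr_s}(n_s)$, this yields $c_NH_q^{-1/2}\ll1$. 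This route also removes your ``by symmetry'' step, which is not a genuine symmetry since $U_N$ singles out $j_K$.

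\textbf{Part (3).} Multiplying a bounded orthonormal basis of $\H_{K-1,N-i}$ by $z_K^i$ and normalizing does not keep it bounded. The normalizing constant is $c_{N,i}\asymp N^{(i+1)/2}$, and Lemma~\ref{lem:facenorm} only gives a bound $\ll N^{1/2}$. That bound is sharp: for $i=0$, on $\{z_K=0\}$ the function $c_{N,0}\psi$ has supremum at least $c_{N,0}\|\psi\|_{L^2}$, and $c_{N,0}^2=(N+K-1)/(K-1)$. So the orthogonal direct sum you describe is not uniformly bounded.

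Although you mention ``absorption'', your argument never mixes the two groups, and that mixing is the missing step. There are only $D_2\ll N^{K-2}$ face functions out of $D\asymp N^{K-1}$. The Haar-type rotation of Lemma~\ref{lem:absorption} spreads them over the bounded $\p^b_l$ and gives the bound $10A+10B\sqrt{D_2/D}\ll1$ with $B\ll N^{1/2}$. Separately, the shifted basis on $\{j_K\ge N-N_0\}$ needs the normalization factor $(D/D')^{1/2}$.
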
 

{The paper is organized as follows. In Section~\ref{part:one}, we deduce orthonormality from the bijection $U_N$ and the orthogonality of the Fourier characters on $\B_N$. We then localize the multinomial weights to boxes and use Fourier inversion to reduce uniform boundedness to the sectional estimate in Theorem~\ref{thm:box}. An induction on the dimension, combined with Olevskii's absorption lemma, removes the divisibility condition, thus, we only need the equidecompositions of Proposition~\ref{prop:UNmap} for degrees satisfying $\lcmk\mid N$. The next two sections establish the exponential sum estimate for balanced pieces in Theorem~\ref{thm:general} and the decomposition into such pieces in Theorem~\ref{thm:decomp}, which together yield Theorem~\ref{thm:box}. Finally, Section~\ref{sec:explicitEquidecomposition} proves Proposition~\ref{prop:UNmap} in two ways: through unimodular triangulations and through an explicit recursive construction. These ingredients complete the proof of Theorem~\ref{thm:onbexplicit}, from which Theorem~\ref{thm:onb} follows immediately.}

\section{Reduction to exponential sums: Proof of Theorem \ref{thm:onbexplicit} assuming Theorem \ref{thm:box} and Proposition \ref{prop:UNmap}}\label{part:one}
Recall that for $z=(z_1,z_2,\ldots,z_K) \in \cp^K$ and $j\in \z_{\geq 0}^K$ we write $z^j=z_1^{j_1}\cdots z_K^{j_K}$. We also set $k=K-1$. 
For $z \in \Sphere^{2K - 1}$ we {put} $z_s = \sqrt{r_s} e(\theta_s)$, for $s=1,{\ldots},K$, where $e(z)=e^{2\pi i z}$,  $\theta_s\in \RR$ and $r_s \geq 0$ so that $\sum_{s=1}^K r_s = 1$. In order to estimate uniformly
\[
\|\p\|_{L^\infty(\mathbb B_K)} = \|\p\|_{L^\infty(\Sphere^{2K-1})} = \sup_{\substack{r_1+\cdots+r_{K}=1 \\ r_s\geq 0, \ \theta_s \in \RR}} |\p( \sqrt{r_1} e(\theta_1),\ldots,\sqrt{r_K} e(\theta_K))|
\]
for $\p \in {\PolySpace_N}$, it will be useful to define the following auxiliary function.  For $\lambda>0$, define the square root of the Poisson
weight, extended by zero to the negative integers, by
\[
q_\lambda(n)=
\begin{cases}
\displaystyle e^{-\lambda/2}\frac{\lambda^{n/2}}{\sqrt{n!}},&n\geq0,\\[5pt]
0,&n<0.
\end{cases}
\]
Writing $r=(r_1,{\ldots},r_K)$, {for $N\ge1$} define for every $n\in\z^K$ the normalized multinomial weights
\[
a_{N,r}(n):=
\begin{cases}
\binom Nn^{1/2}\prod_{s=1}^Kr_s^{n_s/2},&n_1,{\ldots},n_K\ge0 \text{ and  } n_1+\cdots+n_K=N,\\[4pt]
0,&\text{otherwise}.
\end{cases}
\]
It is straightforward to see that when $a_{N,r}(n) \neq 0$ we have
\begin{align}
a_{N,r}(n)=c_N\prod_{s=1}^Kq_{Nr_s}(n_s)
\quad \text{with} \quad
c_N:=e^{N/2}\frac{\sqrt{N!}}{N^{N/2}}\asymp N^{1/4},
\label{eq:poissonFactorizationMultinomial}
\end{align}
where the estimate for $c_N$ follows from Stirling's \cite[Section~5.11]{DLMF}. Note that any $\p \in {\PolySpace_N}$, in the new coordinates $z_s = \sqrt{r_s} e(\theta_s)$, can be {expressed} as
\[
\p(z_1,\ldots,z_K) = \sum_{\substack{n \in \Z_{\geq 0}^K  \\  n_1+\cdots+n_K=N}} c(n) a_{N,r}(n) e(\theta \cdot n),
\]
for some coefficients $c(n)$, where $\theta = (\theta_1,\ldots,\theta_K)$. In particular, $\|\p\|^2 = \binom{N+K-1}{K-1}^{-1} \sum  |c(n)|^2$.  Put
\[
H_\lambda:=(1+\lambda)^{1/2}  \quad \text{and} \quad
\qquad \rho_\lambda(n):=\frac{|n-\lambda|}{H_\lambda}.
\]
{Also set} $\Delta b(j)=b(j+1)-b(j)$, for a sequence $b:\Z\to \cp$.  The next lemma produces the necessary bounds for the components of $a_{N,r}(n)$.

\begin{lemma}\label{lem:poisson1d}
We have, for all $n \in \mathbb{Z}$,
\begin{align}
q_\lambda(n)\ll H_\lambda^{-1/2}e^{-\rho_\lambda(n)/4} \quad \text{and} \quad
|\Delta q_\lambda(n)|\ll H_\lambda^{-3/2}e^{-\rho_\lambda(n)/4}.
\label{eq:poissonPointwiseBound}
\end{align}
\end{lemma}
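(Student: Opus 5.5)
We prove the two bounds in \eqref{eq:poissonPointwiseBound} separately for the bulk $|n-\lambda|\ll H_\lambda$, the tails $n\ge 0$ with $|n-\lambda|\gg H_\lambda$, and the trivial range $n<0$. Here $q_\lambda(n)=0$, and $\Delta q_\lambda(n)=0$ unless $n=-1$, where $\Delta q_\lambda(-1)=q_\lambda(0)=e^{-\lambda/2}$. Since $\rho_\lambda(-1)\le H_\lambda$, we have $e^{-\lambda/2}\ll H_\lambda^{-3/2}e^{-\rho_\lambda(-1)/4}$ trivially. We may also assume $\lambda\ge 1$, since for $\lambda\le1$ we have $H_\lambda\asymp1$ and $q_\lambda(n)\le \lambda^{n/2}/\sqrt{n!}\le 1/\sqrt{n!}$, which decays faster than any exponential in $n\asymp\rho_\lambda(n)$.

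\textbf{Pointwise bound.} We work with the Poisson probabilities $p(n)=q_\lambda(n)^2=e^{-\lambda}\lambda^n/n!$. The ratio $p(n+1)/p(n)=\lambda/(n+1)$ shows that $p$ is unimodal with mode at $\lfloor\lambda\rfloor$, and Stirling gives $p(\lfloor\lambda\rfloor)\asymp\lambda^{-1/2}$. Hence $p(n)\ll H_\lambda^{-1}$ everywhere, which covers the bulk. For the tails, write $\log p(n)-\log p(m)=\sum\log(\lambda/(i+1))$, summed over the integers between the mode $m$ and $n$. For $n=\lambda+t$ with $t>0$, each factor satisfies $\log(\lambda/(i+1))\le -(i+1-\lambda)/(i+1)$. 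Summing over $i$ up to $n$ gives a bound of $-c\min(t^2/\lambda,\,t)$, and symmetrically for $n<\lambda$, where the sum is $-c\,t^2/\lambda$ with $t\le\lambda$. In either case the exponent is at least $c\,t/H_\lambda=c\rho_\lambda(n)$ once $t\ge H_\lambda$, because $t^2/\lambda\ge t/H_\lambda$ for $t\ge H_\lambda$ and $t\ge t/H_\lambda$. Taking square roots, $q_\lambda(n)\ll H_\lambda^{-1/2}e^{-c\rho_\lambda(n)/2}$. The constant $c$ must then be tracked carefully enough to reach exponent $1/4$. If the elementary inequalities are too lossy, we use the standard Chernoff-type bound $p(n)\le e^{-\lambda h(t/\lambda)}$ with $h(u)=(1+u)\log(1+u)-u\ge u^2/(2(1+u/3))$, combined with $p(n)\ll\lambda^{-1/2}$, to secure the constant $1/4$ when $\lambda\ge1$.

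\textbf{Difference bound.} We write $\Delta q_\lambda(n)=q_\lambda(n)\bigl(\sqrt{\lambda/(n+1)}-1\bigr)$ for $n\ge0$. Now
\[
\Bigl|\sqrt{\lambda/(n+1)}-1\Bigr|\le \frac{|\lambda-n-1|}{\sqrt{n+1}\,(\sqrt{\lambda}+\sqrt{n+1})}.
\]
When $n+1\gg\lambda$, this is $\ll 1+|n-\lambda|/(n+1)$. Near $n\approx\lambda$ it is $\ll (1+|n-\lambda|)/H_\lambda^2 \ll H_\lambda^{-1}(1+\rho_\lambda(n))$. Combining this with the pointwise bound at a slightly better exponent (say $e^{-\rho/3}$), the polynomial factor $1+\rho$ is absorbed, giving $|\Delta q_\lambda(n)|\ll H_\lambda^{-3/2}e^{-\rho_\lambda(n)/4}$.

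Two ranges need separate treatment. For small $n$ with $\lambda$ large, the factor $\sqrt{\lambda/(n+1)}$ can be as large as $\sqrt\lambda$. However, $q_\lambda(n)$ is then super-exponentially small in $\rho\approx\sqrt\lambda$, because the Gaussian regime gives $e^{-c\lambda}=e^{-c\rho^2}$, so the extra $\sqrt\lambda=\rho$ is absorbed. For $n\gg\lambda$ the factor is bounded, and $H_\lambda^{1}\le 1+\rho$ is absorbed by the exponential.

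\textbf{Main obstacle.} The main obstacle is bookkeeping of constants. We must get exactly $e^{-\rho/4}$ uniformly across the Gaussian regime $|t|\le\lambda$ and the exponential regime $t\ge\lambda$. We do this by proving the stronger intermediate bound $p(n)\ll H_\lambda^{-1}\exp(-\tfrac{2}{3}\rho_\lambda(n))$ via the function $h$, which leaves room to absorb the polynomial factors in the difference estimate.
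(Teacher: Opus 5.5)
Your argument follows the paper's route. You bound $q_\lambda(n)^2$ by a normalizing prefactor times $e^{-F_\lambda(n)}$, where $F_\lambda(n)=n\log(n/\lambda)-n+\lambda=\lambda h((n-\lambda)/\lambda)$. You turn the lower bound for $F_\lambda$ (quadratic in $\rho_\lambda$ for $|n-\lambda|\le\lambda$, linear beyond) into decay in $\rho_\lambda$. Then you write $\Delta q_\lambda(n)=q_\lambda(n)\bigl(\sqrt{\lambda/(n+1)}-1\bigr)$ and let the exponential absorb the polynomial-in-$\rho_\lambda$ size of the quotient.

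The differences from the paper are cosmetic. You get the prefactor $\lambda^{-1/2}$ by telescoping from the mode, not from Stirling's $(1+n)^{-1/2}$ plus $(1+n)^{-1/2}\le H_\lambda^{-1}(1+\rho_\lambda(n))$. You also split into ranges instead of using the single quotient bound $(1+\rho_\lambda)^2/H_\lambda$.

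Your insistence on an intermediate exponent strictly better than $1/4$ is justified. The paper's displayed chain only yields $q_\lambda(n)\ll H_\lambda^{-1/2}(1+\rho_\lambda)^{1/2}e^{-\rho_\lambda/4}$. In its last display the factor $e^{-\rho_\lambda/2}$ should read $e^{-\rho_\lambda/4}$, since only $e^{-F_\lambda/2}$ is available for $q_\lambda$ itself. So as written it proves the lemma with any exponent below $1/4$, which is all Lemma~\ref{lem:fouriermass} needs.

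The one step you should not use as stated is the fallback ``Chernoff bound $p(n)\le e^{-\lambda h(t/\lambda)}$ combined with $p(n)\ll\lambda^{-1/2}$''. Two separate upper bounds only give their minimum. For $1\ll\rho_\lambda(n)\le(\log\lambda)^{1/2}$ one has $e^{-\lambda h(t/\lambda)}\approx e^{-\rho_\lambda^2/2}\ge\lambda^{-1/2}$, so the minimum is $\asymp\lambda^{-1/2}$. That misses your target $\lambda^{-1/2}e^{-2\rho_\lambda/3}$ by the unbounded factor $e^{2\rho_\lambda/3}$.

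What you need is the product $p(n)\ll\lambda^{-1/2}e^{-F_\lambda(n)}$. Your own telescoping gives it if you keep exact logarithms instead of $\log x\le x-1$. For $\lambda\ge1$ and $n>m:=\lfloor\lambda\rfloor$,
$$\sum_{j=m+1}^{n}\log(j/\lambda)\ge\int_m^n\log(x/\lambda)\,dx=F_\lambda(n)-F_\lambda(m),\qquad 0\le F_\lambda(m)\le\log 2.$$
The same works below the mode, or you can use Stirling directly as the paper does. Bennett's inequality $h(u)\ge u^2/(2(1+u/3))$ then gives $F_\lambda(n)\ge\rho_\lambda^2/(2(1+\rho_\lambda/(3H_\lambda)))\ge\frac23\rho_\lambda$ above the mode, for $\rho_\lambda\ge2$ and $\lambda\ge1$. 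Below the mode you get a bound quadratic in $\rho_\lambda$. With this, the rest of your difference estimate goes through as you describe.
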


\begin{proof}
Stirling's inequalities give
\begin{equation} \label{eq:lemma_first}
q_\lambda(n)^2\ll(1+n)^{-1/2}e^{-F_\lambda(n)},
\end{equation}
where $F_\lambda(n):=n\log(n/\lambda)-n+\lambda= {\int_{\la}^n (n-t)\frac{\dd t}{t}}$. Separating into cases, $n\geq \la \geq 0$ and $\la > n\geq 0$, we can bound the integral from below by
\begin{equation} \label{eq:lemma_second}
F_\lambda(n)\ge\frac{(n-\lambda)^2}{2(\lambda+|n-\lambda|)}
\ge\frac{\rho_\lambda(n)^2}{2(1+\rho_\lambda(n))}\ge\frac{\rho_\lambda(n)-1}{2}.
\end{equation}
The elementary inequality $\frac{1}{a} \leq \frac{1}{b} + |\frac{a^2}{b^2}-1|$, for $a,b\geq 1$, applied to $a=(n+1)^{1/2}$ and $b=H_\lambda$ implies
\begin{equation} \label{eq:lemma_bound_ineq}
(1+n)^{-1/2}\le H_\lambda^{-1}(1+\rho_\lambda(n))
\ll H_\lambda^{-1}e^{\rho_\lambda(n)/4}.
\end{equation}
To bound $\Delta q_{\lambda}$ we note that for $n \geq 0$ 
\[
\Delta q_\lambda(n)
=q_\lambda(n)\frac{\lambda-n-1}
{\sqrt{n+1}(\sqrt\lambda+\sqrt{n+1})}.
\]
Since $\sqrt\lambda+\sqrt{n+1}\ge H_\lambda$, the quotient satisfies
\[
\left|\frac{\lambda-n-1}
{\sqrt{n+1}(\sqrt\lambda+\sqrt{n+1})}\right|
\le\frac{1+\rho_\lambda(n)}{\sqrt{n+1}}
\le\frac{(1+\rho_\lambda(n))^2}{H_\lambda},
\]
where in the last inequality we reused the first inequality in \eqref{eq:lemma_bound_ineq}. Combining this with \eqref{eq:lemma_first} and \eqref{eq:lemma_second}, we get 
$$
| \Delta q_{\lambda}(n) | \ll H_{\lambda}^{-3/2} (1 + \rho_{\lambda}(n))^{5/2} e^{-\rho_{\lambda}(n) / 2} \ll H_{\lambda}^{-3/2} e^{-\rho_{\lambda}(n) / 4}.
$$
Also $|\Delta q_{\lambda}(-1)| = e^{-\lambda / 2}$ and $|\Delta q_{\lambda}(- n)| = 0$ for $n > 1$. Therefore the above inequality also holds for negative $n$.
\end{proof}

Fix the hat function $\eta(x)=(1-|x|)_+$.
It is supported in $[-1,1]$, bounded by one, $1$-Lipschitz and {satisfies}
\begin{align}
\sum_{n\in\z}\eta(x-n)=1,
\label{eq:shiftFreePartitionUnity}
\end{align}
for every $x\in\RR$. For a finitely supported $b:\z\to\cp$, write
\[
\ft b(u)=\sum_{j\in\z}b(j)e(-uj) \quad \text{for} \quad u\in \RR.
\]

\begin{lemma}\label{lem:fouriermass}
For $m\in\z$ put
\begin{align}
q_{\lambda,m}(n):=q_\lambda(n)\eta(n/H_{\la}-m).
\label{eq:localizedMultinomialSequence}
\end{align}
Then $q_\lambda=\sum_mq_{\lambda,m}$, each $q_{\lambda,m}$ is
supported on $\{n:\ |n/H_\lambda-m|<1\}$ and
\begin{align}
\sum_{m\in\z}\int_0^1|\ft{q_{\lambda,m}}(u)|\,\dd u\ \ll\ H_\lambda^{-1/2},
\label{eq:multinomialTotalFourierMass}
\end{align}
where the implied constant {is} independent of $\la >0$.
\end{lemma}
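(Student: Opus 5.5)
The decomposition $q_\lambda=\sum_m q_{\lambda,m}$ follows directly from the partition of unity \eqref{eq:shiftFreePartitionUnity} applied at $x=n/H_\lambda$. The support claim is immediate because $\eta$ vanishes outside $(-1,1)$. The real content is the Fourier mass bound \eqref{eq:multinomialTotalFourierMass}. For it, I will estimate each $\int_0^1|\ft{q_{\lambda,m}}|$ by an $\ell^2$ bound plus a summation-by-parts bound, exploiting that $q_{\lambda,m}$ lives on an interval of length about $2H_\lambda$ where, by Lemma~\ref{lem:poisson1d}, it has size roughly $H_\lambda^{-1/2}e^{-|m-\lambda/H_\lambda|/4}$.

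Set $E_m:=\max\{e^{-\rho_\lambda(n)/4}:|n/H_\lambda-m|<1\}$. Since $\rho_\lambda(n)=|n/H_\lambda-\lambda/H_\lambda|$ differs from $|m-\lambda/H_\lambda|$ by less than $1$ on the support, we have $E_m\ll e^{-|m-\lambda/H_\lambda|/4}$, and hence $\sum_m E_m\ll 1$ uniformly in $\lambda$. It therefore suffices to show $\int_0^1|\ft{q_{\lambda,m}}(u)|\,\dd u\ll H_\lambda^{-1/2}E_m$.

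I split the integral over $u\in[-1/2,1/2]$ (by periodicity) into the regions $|u|\le 1/H_\lambda$ and $|u|>1/H_\lambda$.

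\emph{Small frequencies.} Here I use the trivial bound $|\ft{q_{\lambda,m}}(u)|\le\sum_n|q_{\lambda,m}(n)|\ll H_\lambda\cdot H_\lambda^{-1/2}E_m$. The support has at most $2H_\lambda+1$ points, so this region contributes $\ll H_\lambda^{-1/2}E_m$.

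\emph{Large frequencies.} Here I sum by parts, using $|1-e(-u)|\asymp|u|$ to obtain
\[
|\ft{q_{\lambda,m}}(u)|\ll |u|^{-1}\sum_n|\Delta q_{\lambda,m}(n)|.
\]
By the product rule, $\Delta q_{\lambda,m}(n)$ splits into two pieces. The first is $\Delta q_\lambda(n)\,\eta(\cdot)$, which is bounded by $H_\lambda^{-3/2}E_m$ via Lemma~\ref{lem:poisson1d}. The second is $q_\lambda(n+1)\Delta[\eta(\cdot/H_\lambda-m)](n)$, which is $\ll H_\lambda^{-1/2}E_m\cdot H_\lambda^{-1}$ because $\eta$ is $1$-Lipschitz. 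Summing over the $O(H_\lambda)$ support points gives $\sum_n|\Delta q_{\lambda,m}|\ll H_\lambda^{-1/2}E_m$. Integrating $|u|^{-1}$ over $1/H_\lambda<|u|\le1/2$, however, produces a factor $\log H_\lambda$, which is too lossy.

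\emph{Removing the logarithm.} This is the main obstacle, and I will handle it with a second difference. Summing by parts twice gives
\[
|\ft{q_{\lambda,m}}(u)|\ll |u|^{-2}\sum_n|\Delta^2 q_{\lambda,m}(n)|,
\]
so I need $\sum_n|\Delta^2q_{\lambda,m}|\ll H_\lambda^{-3/2}E_m$, since $\int_{|u|>1/H_\lambda}u^{-2}\,\dd u\ll H_\lambda$. There are three contributions.
\begin{enumerate}
\item The term $\Delta^2 q_\lambda\cdot\eta$ is pointwise $\ll H_\lambda^{-5/2}E_m$. This follows by differentiating the ratio formula from the proof of Lemma~\ref{lem:poisson1d} once more. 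Near $n=0$ the second difference of the zero extension is of size $e^{-\lambda/2}$, which is harmless for $\lambda\gg1$; for $\lambda\ll1$ we have $H_\lambda\asymp1$ and the trivial bound suffices.
\item The cross term $\Delta q_\lambda\cdot\Delta\eta$ is pointwise $\ll H_\lambda^{-5/2}E_m$.
\item The term $q_\lambda\cdot\Delta^2[\eta(\cdot/H_\lambda-m)]$ needs care because $\eta$ is only piecewise linear. Its second difference vanishes except at $O(1)$ values of $n$ near the three kinks $n\approx H_\lambda(m+\{-1,0,1\})$, where it is $O(1/H_\lambda)$. This gives a total contribution $\ll H_\lambda^{-1}\cdot H_\lambda^{-1/2}E_m$.
\end{enumerate}
Summing these over the support gives $H_\lambda\cdot H_\lambda^{-5/2}E_m+H_\lambda^{-3/2}E_m\ll H_\lambda^{-3/2}E_m$, as required. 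Hence the large-frequency region contributes $\ll H_\lambda\cdot H_\lambda^{-3/2}E_m=H_\lambda^{-1/2}E_m$.

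Combining the two regions and summing over $m$ with $\sum_mE_m\ll1$ yields \eqref{eq:multinomialTotalFourierMass}. The only genuinely new estimate is the second-difference bound for $q_\lambda$; it comes from the same Stirling ratio computation as Lemma~\ref{lem:poisson1d} and is the step I expect to require the most care.
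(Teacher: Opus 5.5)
Your argument is correct, but it takes a different route from the paper.

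\textbf{The paper's route.} The paper never bounds $\widehat{q_{\lambda,m}}$ pointwise. It applies Cauchy--Schwarz against the weight $(1+H^2|e(u)-1|^2)^{-1}$, whose integral over $[0,1]$ is $\ll H^{-1}$, and then uses Parseval. This gives the discrete Sobolev inequality $\int_0^1|\widehat b|\ll H^{-1/2}(\|b\|_{\ell^2}+H\|\Delta b\|_{\ell^2})$. That inequality removes the $\log H_\lambda$ loss you ran into with a single summation by parts. The reason is that orthogonality is exploited in $\ell^2$ instead of the triangle inequality in $\ell^1$. It also needs only first differences, so the only inputs are Lemma~\ref{lem:poisson1d} and the Lipschitz bound for $\eta$.

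\textbf{Your route.} You split frequencies at $1/H_\lambda$ and sum by parts twice. This requires inputs the paper avoids:
\begin{enumerate}
\item the new estimate $|\Delta^2 q_\lambda(n)|\ll H_\lambda^{-5/2}e^{-c\rho_\lambda(n)}$;
\item the boundary terms at $n=-1,-2$;
\item the kink analysis for $\eta$.
\end{enumerate}
All of these do go through. For (1), set $a=\sqrt{\lambda/(n+1)}$ and $b=\sqrt{\lambda/(n+2)}$. For $n\ge0$ you have $\Delta^2q_\lambda(n)=q_\lambda(n)\bigl((a-1)^2+a(b-a)\bigr)$. Here $|a-1|\le(1+\rho_\lambda(n))^2/H_\lambda$, as in the proof of Lemma~\ref{lem:poisson1d}. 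Also $|a(b-a)|\le\lambda/(n+1)^2\le(1+\rho_\lambda(n))^4/H_\lambda^2$ by \eqref{eq:lemma_bound_ineq}. In return, your route yields the stronger pointwise decay $|\widehat{q_{\lambda,m}}(u)|\ll E_m\min(H_\lambda^{1/2},H_\lambda^{-3/2}u^{-2})$ for $|u|\le1/2$. That is more than the lemma needs.

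\textbf{Two bookkeeping points.} Neither affects $\sum_m E_m\ll1$.
\begin{itemize}
\item $\Delta q_{\lambda,m}$ and $\Delta^2 q_{\lambda,m}$ are supported slightly beyond $\{|n/H_\lambda-m|<1\}$. So $E_m$ should be defined over a window such as $|n/H_\lambda-m|<3$.
\item Absorbing the polynomial factors in $1+\rho_\lambda(n)$ may force a smaller constant in the exponent.
\end{itemize}
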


\begin{proof}
Write $H=H_\lambda$. The first two assertions follow from
\eqref{eq:shiftFreePartitionUnity} and the support of $\eta$. For any finitely supported $b:\z\to\cp$, Cauchy--Schwarz implies
\[
\int_0^1|\ft b(u)|\,\dd u
\le\Bigl(\int_0^1\frac{\dd u}{1+H^2|e(u)-1|^2}\Bigr)^{1/2}
\Bigl(\int_0^1\bigl(1+H^2|e(u)-1|^2\bigr)|\ft b(u)|^2\,\dd u\Bigr)^{1/2}.
\]
Since $|e(u)-1|=2|\sin\pi u|\ge4\min\{u,1-u\}$, the first factor is
$\ll H^{-1/2}$. As $\ft{\Delta b}(u)=(e(u)-1)\ft b(u)$, Parseval
evaluates the square of the second factor as
$\|b\|_{\ell^2}^2+H^2\|\Delta b\|_{\ell^2}^2$. Hence
\begin{align}
\int_0^1|\ft b(u)|\,\dd u
\ \ll\ H^{-1/2}\bigl(\|b\|_{\ell^2(\z)}+H\|\Delta b\|_{\ell^2(\z)}\bigr).
\label{eq:multinomialDiscreteSobolev}
\end{align}
Since $\eta$ is $1$-{Lipschitz} we have
\[
|\Delta q_{\lambda,m}(n)|
\le |\Delta q_\lambda(n)|+H^{-1}q_\lambda(n)
\ll H^{-3/2}\exp\Bigl(-  \frac{|n-\lambda|}{4H}\Bigr),
\]
using the previous lemma in the last inequality.
Both $q_{\lambda,m}$ and $\Delta q_{\lambda,m}(n)$ are supported on
integers $n$ satisfying $|n/H-m|\le2$. Thus Lemma~\ref{lem:poisson1d} gives
\[
\|q_{\lambda,m}\|_2+H\|\Delta q_{\lambda,m}\|_2
\ll \exp\Bigl(- \frac{1}{4} \cdot \Bigl|m-\frac\lambda H\Bigr|\Bigr),
\]
which by \eqref{eq:multinomialDiscreteSobolev} implies
\[
\sum_{m\in\Z}\int_0^1|\widehat{q_{\lambda,m}}(u)|\,\dd u
\ll H^{-1/2}\sum_{m\in\Z}
\exp\Bigl(-\frac{1}{4} \cdot \Bigl|m-\frac\lambda H\Bigr|\Bigr)
\ll H^{-1/2},
\]
uniformly.
\end{proof}

\subsection{Uniform boundedness for a more general shifted basis}
Throughout this section we will assume Theorem \ref{thm:box} and Proposition \ref{prop:UNmap}. 
We start by recalling the collection of functions we have already defined in Theorem \ref{thm:onbexplicit}.  
Let {$a\geq1$, and let $\alpha = [a_0; a_1, a_2, \ldots]$ be an}
irrational number with $a_i \leq a$ for all $i \geq 1$. Let $\cS_{N}:=\left\{{n}\in \Z_{\ge0}^k:
  n_1+\cdots+n_k\le N\right\}$, $\B_{N}:=\prod_{s=1}^k \{0,1,\ldots, \frac{N}{k+1-s}\}$ and $U_N:\cS_N \to \B_N$ be the map given by Proposition \ref{prop:UNmap}. For  $\lcmk\mid N$  write
   \begin{equation}
  \p_{l}(z):=
  \sum_{\substack{n\in\mathbb Z_{\ge0}^K\\n_1+\cdots+n_K=N}}
  {\binom Nn^{1/2}z^n}
  e\left(\alpha\sum_{s=1}^Kn_s^2
  +U_N(n_1,\ldots,n_k)\cdot D_Nl\right),
  \label{def:oribasis}
  \end{equation}
 for each $l\in \B_N$, where $D_N = {\rm diag}(\frac{1}{N/k+1}, \frac{1}{N/(k - 1)+ 1},\ldots, \frac{1}{N+1})$. The basis depends on \(\alpha\), but we suppress this dependence in the notation. The bounds we are going to derive are uniform over all \(\alpha\) with bounded partial quotients.

\begin{definition}[Shifted basis]\label{def:shiftedbasis}
Let $N\geq 0$ and set
\begin{align}
b:=N-\lcmk\Bigl\lfloor\frac N{\lcmk}\Bigr\rfloor \quad \text{and} \quad N'=N-b.
\end{align}
Also write
\begin{align}
D:=\binom{N+k}{k}=\dim\PolySpace_N\quad \text{and}\quad
D':=\binom{N'+k}{k}=\dim\PolySpace_{N'}.
\label{eq:dimensions}
\end{align}
For $l\in\B_{N'}$ put
\begin{align}
\p^b_{l}(z)
=\Bigl(\frac{D}{D'}\Bigr)^{1/2}
\sum_{\substack{n\in\z_{\ge0}^K,\\ n_1+\cdots+n_K=N\\ n_K\ge b}}
\binom Nn^{1/2}z^n\,
e\Bigl(\al\sum_{s=1}^Kn_s^2
+U_{N'}(n_1,\ldots,n_k)\cdot D_{N'}l\Bigr).
\label{eq:shiftedBasis}
\end{align}
\end{definition}
Note that when $b=0$ the collection $\{\p^b_{l}: l\in \B_{N'}\}$ is exactly the one defined in \eqref{def:oribasis}. We consider the more general basis elements with a shift by $b$ for the purpose of eventually removing the restriction that $\lcmk | N$.  

\begin{proposition}\label{prop:shiftedONS}
The functions $\{\p^b_{l}:l\in\B_{N'}\}$ form an orthonormal basis of the 
subspace spanned by the monomials $z^n \in {\PolySpace_N}$ with
$n_K\ge b$.
\end{proposition}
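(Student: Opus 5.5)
The plan is to first identify the index set and then reduce orthonormality to character orthogonality on the box $\B_{N'}$. The monomials $z^n$ with $n_1+\cdots+n_K=N$ and $n_K\ge b$ are determined by $(n_1,\ldots,n_k)$. Here $n_K=N-(n_1+\cdots+n_k)\ge b$ is equivalent to $n_1+\cdots+n_k\le N-b=N'$. Hence the map $n\mapsto (n_1,\ldots,n_k)$ is a bijection from this set of exponents onto $\cS_{N'}$. Composing with $U_{N'}$ from Proposition~\ref{prop:UNmap} gives a bijection onto $\B_{N'}$. In particular the subspace in question has dimension $\#\cS_{N'}=\binom{N'+k}{k}=D'=\#\B_{N'}$. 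So it suffices to prove that the $D'$ functions $\p^b_l$ are orthonormal: they lie in the subspace, and orthonormality gives linear independence, hence a basis.

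Orthonormality is computed from \eqref{def:innerprod}. Write $\p^b_l=(D/D')^{1/2}\sum_n \binom Nn^{1/2} c_l(n) z^n$, where
\[
c_l(n)=e\Bigl(\al\sum_s n_s^2+U_{N'}(n_1,\ldots,n_k)\cdot D_{N'}l\Bigr).
\]
Then
\[
\langle \p^b_l,\p^b_{l'}\rangle=\frac{D}{D'}\binom{N+k}{k}^{-1}\sum_{n}\binom Nn\binom Nn^{-1}c_l(n)\overline{c_{l'}(n)} =\frac1{D'}\sum_{n}e\bigl(U_{N'}(n_1,\ldots,n_k)\cdot D_{N'}(l-l')\bigr),
\]
because $K-1=k$ gives $\binom{N+K-1}{K-1}=D$. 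The quadratic phases cancel since $|e(\cdot)|=1$, and the binomial weights cancel exactly.

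Next, change variables $m=U_{N'}(n_1,\ldots,n_k)$, which ranges bijectively over $\B_{N'}=\prod_{s}\{0,\ldots,M_s-1\}$ with $M_s=N'/(k+1-s)+1$. This turns the sum into the product
\[
\prod_{s=1}^k\sum_{m_s=0}^{M_s-1}e\bigl(m_s(l_s-l'_s)/M_s\bigr).
\]
For $l,l'\in\B_{N'}$ we have $|l_s-l'_s|<M_s$, so this product equals $D'\,\bo\delta_{l,l'}$, using $\prod_s M_s=\#\B_{N'}=D'$. Therefore $\langle\p^b_l,\p^b_{l'}\rangle=\bo\delta_{l,l'}$.

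No step is a real obstacle. The only point needing care is the bookkeeping: checking that the constraint $n_K\ge b$ corresponds exactly to $\cS_{N'}$, and that the normalization $(D/D')^{1/2}$ combines with the factor $\binom{N+K-1}{K-1}^{-1}=D^{-1}$ to give $1/D'$. The bijectivity of $U_{N'}$ is supplied by Proposition~\ref{prop:UNmap}. The degenerate case $N'=0$, where $\B_0=\{0\}$ and the subspace is spanned by the single monomial $z_K^N$, is covered by the same computation.
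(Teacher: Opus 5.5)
Your proof is correct and is essentially the paper's argument. The paper also reduces the inner product via \eqref{def:innerprod} to $\frac1{D'}\sum_{x\in\cS_{N'}}e\bigl(U_{N'}(x)\cdot D_{N'}(l-l')\bigr)$, transfers the sum to $\B_{N'}$ through the bijection $U_{N'}$, factors it into geometric sums equal to $\bo\delta_{l,l'}$, and concludes from the dimension count $D'=\#\B_{N'}$. You additionally write out bookkeeping the paper leaves implicit: the identification of the exponents with $n_K\ge b$ with $\cS_{N'}$, and the cancellation $(D/D')\cdot D^{-1}=1/D'$.
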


\begin{proof}
By \eqref{def:innerprod} we have
\begin{align*}
\langle\p^b_{l},\p^b_{l'}\rangle
=\frac1{D'}
\sum_{x\in\cS_{N'}}
e\bigl(U_{N'}(x)\cdot D_{N'}(l-l')\bigr)
& =\frac1{D'}\sum_{w\in\B_{N'}}
e\bigl(w\cdot D_{N'}(l-l')\bigr) \\
& = \prod_{s=1}^k \Biggl(  \frac{1}{{N'}/s+1}\sum_{v=0}^{{N'}/s} e\left(\frac{v (l_{k+1-s}-l'_{k+1-s})}{{N'}/s+1} \right)\Biggr)
\\ & =\delta_{l,l'}.
\end{align*}
Since the stated span has dimension $D'=\# \B_{N'}$, the proposition follows.
\end{proof}

Write $z_s=r_s^{1/2}e(\theta_s)$, so that $r_s\geq0$, $\sum_{s=1}^Kr_s=1$,
and $\theta_s \in \RR$. Put $r=(r_1,\ldots,r_K)$ and $\theta=(\theta_1,\ldots,\theta_K)$. 
Set $H_s:=H_{Nr_s}=(1+Nr_s)^{1/2}$ and
\begin{align}
H_q=\max_{1\le s\le K} H_s\asymp_K {(1+N)^{1/2}}.
\label{eq:largestScale}
\end{align}

We now give a brief informal overview {of} how we obtain the uniform boundedness of $\p^0_{l}$ from Theorem \ref{thm:box} and Proposition \ref{prop:UNmap}. On the hyperplane $n_1+\cdots+n_K=N$ {the} normalized multinomial weight
$a_{N,r}(n)$ is a product of one-dimensional
functions of the coordinates $n_s$ and each
factor is concentrated at scale $H_s=(1+Nr_s)^{1/2}$ around $Nr_s$.
We localize each factor at its scale and Fourier-invert so the total
Fourier mass is $\ll\prod_sH_s^{-1/2}$ (Lemma~\ref{lem:fouriermass}).
The exponential sum over the resulting box, cut by the linear
equation $n_1 + \cdots + n_K = N$, is then $\ll\prod_{s\ne q}H_s^{1/2}$ by
Theorem~\ref{thm:box}, where $r_q=\max_s r_s$. The normalization
constant $c_N$ then gives
$c_NH_q^{-1/2}\ll_K1$, since
$c_N\asymp N^{1/4}$ and $H_q\asymp_K N^{1/2}$.

\begin{proposition}\label{prop:shiftedbound}
Let $a\ge1$ and let $\alpha=[a_0;a_1,a_2,\ldots]$ be an irrational number
with $a_i\le a$ for all $i\ge1$.  Let $N\ge0$, with $b \in [0, \lcmk)$ the residue of $N$ modulo $\lcmk$ and $N'=N-b$, and let $\p^b_{l}(z)$ be defined as in \eqref{eq:shiftedBasis}. Then there is a constant $A=A(K,a)$ such that
\[
  \sup_{l\in\B_{N'}}\ \sup_{z\in\Sphere^{2K-1}}
  \bigl|\p^b_{l}(z)\bigr|\ \le\ A.
\]
\end{proposition}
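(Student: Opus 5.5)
We plan to pass to polar coordinates and reduce to Theorem~\ref{thm:box} in dimension $K$ (not $k$), keeping $n_1+\cdots+n_K=N$ as a pair of opposite constraints. The point of not eliminating $n_K=N-n_1-\cdots-n_k$ is that this substitution would create cross terms $\alpha(n_1+\cdots+n_k)^2$ in the phase. Theorem~\ref{thm:box} only handles diagonal quadratic forms $\sum_j c_jn_j^2$; here we will take $c_j=1$. The case $N=0$ is trivial, so assume $N\ge1$.

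\textbf{Step 1: rewrite $\p^b_l$ as a sum over pieces.} Write $z_s=r_s^{1/2}e(\theta_s)$. Then
\[
\p^b_l(z)=\Bigl(\tfrac{D}{D'}\Bigr)^{1/2}c_N\sum_{n}\prod_{s=1}^K q_{Nr_s}(n_s)\,e\Bigl(\alpha\sum_{s}n_s^2+\theta\cdot n+U_{N'}(n_1,\ldots,n_k)\cdot D_{N'}l\Bigr),
\]
where $n$ ranges over $\Z^K$ with $n_1+\cdots+n_K=N$ and $n_K\ge b$. Equivalently, $(n_1,\ldots,n_k)\in\cS_{N'}$.

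Split $\cS_{N'}=\bigsqcup_{i\le r}\cP(L_i)$ as in Proposition~\ref{prop:UNmap}. On the $i$th piece, $U_{N'}(n')\cdot D_{N'}l=(A_in'+b_i)\cdot D_{N'}l$ is an affine function of $n$. Its linear part is absorbed into $\theta$ and its constant part is a unimodular factor.

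Lift each constraint $(v,\beta)\in L_i$ to $((v,0),\beta)\in\Z^K\times\Z$. Add the constraints $n_s\ge0$, $n_K\ge b$, and the pair $\pm\bigl((1,\ldots,1),-N\bigr)$. The resulting set $L_i'$ has $\#L_i'\ll_K1$ and $|v|_1\le \max(K,O_K(1))$, and its $\Z$-polytope is exactly the $i$th part of the summation range. Moreover $r\ll_K1$ and $D/D'\ll_K1$ because $b<\lcmk$.

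\textbf{Step 2: localize and Fourier invert.} Write each factor as $q_{Nr_s}=\sum_{m_s}q_{Nr_s,m_s}$ using Lemma~\ref{lem:fouriermass}, and invert
\[
q_{Nr_s,m_s}(n_s)=\int_0^1\ft{q_{Nr_s,m_s}}(u_s)e(u_sn_s)\,\dd u_s.
\]
For each $m\in\Z^K$ this gives
\[
|\p^b_l(z)|\ll_K c_N\sum_{i,m}\int_{[0,1]^K}\prod_s|\ft{q_{Nr_s,m_s}}(u_s)|\;\Bigl|\sum_{n\in\cP(L_i')\cap I_m}e\bigl(\alpha\textstyle\sum_s n_s^2+(\theta+u+\cdot)\cdot n\bigr)\Bigr|\,\dd u.
\]
Here $I_m$ is the box $\prod_s\{n_s:|n_s/H_s-m_s|<1\}$, whose sides satisfy $\#(I_m)_s\ll H_s$. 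Empty boxes contribute nothing.

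\textbf{Step 3: apply the sectional bound.} Apply the second part of Theorem~\ref{thm:box} with $w_0=(1,\ldots,1)$, whose support is all coordinates. This gives, uniformly in the linear phase,
\[
\sum_{n\in\cP(L_i')\cap I_m}\ll_{K,a}\sect(w_0;I_m)^{1/2}=\Bigl(\frac{\vol(I_m)}{\max_s\#(I_m)_s}\Bigr)^{1/2}\ll\prod_{s\ne q}H_s^{1/2}.
\]
To justify the last inequality, note that $\vol(I_m)/\max_s\#(I_m)_s$ is the product of all sides except the longest one. That product is at most the product of all sides except the $q$th one, which is $\ll\prod_{s\ne q}H_s$.

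Then sum over $m$ and $i$ using \eqref{eq:multinomialTotalFourierMass}, which contributes $\prod_sH_s^{-1/2}$. This yields
\[
|\p^b_l(z)|\ll_{K,a}c_N\,H_q^{-1/2}.
\]
Finally, $r_q=\max_s r_s\ge1/K$ gives $H_q\asymp_K N^{1/2}$, and $c_N\asymp N^{1/4}$. Hence $|\p^b_l(z)|\ll_{K,a}1$, uniformly in $l$, $\theta$ and $r$.

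The main obstacle is Step 1: choosing a formulation in which the phase stays diagonal, while the degree constraint and the equidecomposition pieces fit the hypotheses of Theorem~\ref{thm:box} with uniformly bounded complexity. Once that is in place, the remaining estimates are bookkeeping of the scales $H_s$.
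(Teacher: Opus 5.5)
Your proposal is correct and follows the paper's proof essentially step for step. You decompose $\cS_{N'}$ via Proposition~\ref{prop:UNmap}, lift the constraints to $\Z^K$ together with the pair $\pm(\mathbf 1,-N)$ so the phase stays diagonal, localize and Fourier-invert the Poisson factors via Lemma~\ref{lem:fouriermass}, and apply the sectional bound of Theorem~\ref{thm:box} with $w_0=\mathbf 1$ to reach $c_NH_q^{-1/2}\ll_K 1$. The only detail you leave implicit is the reduction, by continuity, to $r_s>0$ for every $s$; this is needed because $q_\lambda$ and Lemma~\ref{lem:fouriermass} are only set up for $\lambda>0$.
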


\begin{proof}{
We can assume $N\geq1$. {By continuity we may also assume $r_s>0$ for every $s$.}
First note that
\[
\frac{D}{D'}=\prod_{s=1}^k\left(1+\frac{b}{N'+s}\right)
\leq\prod_{s=1}^k\left(1+\frac{\lcmk}{s}\right)\ll_K1.
\]
By Proposition \ref{prop:UNmap}, there are
finite sets $L_i\subset\Z^k\times\Z$ and affine maps
$A_ix+a_i\in\aff_k(\Z)$, for $1\leq i\leq n_0$, such that
\[
\cS_{N'}=\bigsqcup_{i=1}^{n_0}\cP(L_i) \quad \text{and} \quad
U_{N'}(x)=A_ix+a_i\quad \text{for} \quad x\in\cP(L_i),
\]
where $n_0\ll_K1$, $\#L_i\ll_K1$ and $|v|_1\ll_K1$ for every $(v,d)\in L_i$. Let $\pi : \mathbb{Z}^k \rightarrow \mathbb{Z}^K$ be defined by $\pi((v_1, \ldots, v_k)) = (v_1, \ldots, v_k, 0)$.
Define the lifted set $L_i'=\{(\pi(v),d) : (v,d)\in L_i\}\cup {\{({\bf 1},-N),(-{\bf 1},N)\}}$, where ${\bf 1} = (1,\ldots, 1)\in {\Z^K}$ so that
\begin{align*}
\{ n \in \Z^K : n_1+\cdots+n_K=N,\ n_K\ge b\}
 & = \{ n \in \Z^K : (n_1,{\ldots},n_k)\in\cS_{N'},\ n_1+\cdots+n_K=N\} \\
 & = \bigsqcup_{i=1}^{n_0}\cP(L_i').
\end{align*}
Therefore,
\begin{align}
\p^b_{l}(z)=\Bigl(\frac D{D'}\Bigr)^{1/2}
\sum_{i=1}^{n_0}e\bigl(a_i\cdot D_{N'}l\bigr)
\sum_{{n\in\cP(L_i')}}
a_{N,r}(n)\,e\Bigl(\al\sum_{s=1}^Kn_s^2+\xi_i\cdot n\Bigr),
\label{eq:basisSplitIntoPieces}
\end{align}
where $\xi_i:=\theta+\pi(A_i^{\mathsf T}D_{N'}l)\in\RR^K$. By \eqref{eq:poissonFactorizationMultinomial} we have
$a_{N,r}(n)=c_N\prod_{s=1}^Kq_{Nr_s}(n_s)$
{for $n\in\cP(L_i')$}. Fourier inversion then gives
\[
a_{N,r}(n)=\sum_{m\in \Z^K} c_N  \int_{[0,1]^K}\prod_{s=1}^K \ft{q_{Nr_s,m_s}}(u_s) e(n\cdot u) \dd u,
\]
where $u=(u_1,\ldots,u_K)$. Since $\prod_{s=1}^K {q_{Nr_s,m_s}}(n_s)$ is supported in the box $J_m=\{n\in \Z^K : |n_s/H_s-m_s| \leq 1\ {\text{for }1\le s\le K}\}$, for $m\in \Z^K$, we commute the sums in \eqref{eq:basisSplitIntoPieces} {to obtain}
\begin{align}
|\p^b_{l}(z)|\ll_K c_N\sum_{i=1}^{n_0}\sum_{m\in\Z^K}
\int_{[0,1]^K}\prod_{s=1}^K|\ft{q_{Nr_s,m_s}}(u_s)|\,
\Bigl|\sum_{n\in \cP(L_i') \cap J_m}
e\Bigl(\al\sum_{s=1}^Kn_s^2+(\xi_i+u)\cdot n\Bigr)\Bigr|\,\dd u.
\end{align}
The sectional bound of Theorem \ref{thm:box} then applies in dimension $K$,
with all $c_j=1$ and $w_0=\mathbf1$, producing
\begin{align}
\Bigl|\sum_{n\in \cP(L_i') \cap J_m}
e\Bigl(\al\sum_{s=1}^Kn_s^2+(\xi_i+u)\cdot n\Bigr)\Bigr|
\ll_{K,a}\sect(\mathbf1;J_m)^{1/2}
\leq\Bigl(\prod_{s\ne q}\#J_{m,s}\Bigr)^{1/2}
\ll_K\prod_{s\ne q}H_s^{1/2},
\label{eq:sectionalVolumeOfTile}
\end{align}
because $\len(\mathbf1;J_m)\geq\#J_{m,q}$. Here $H_q=\max_{1\leq s\leq K} H_s \asymp_K (1+N)^{1/2}$. 
We deduce
\begin{align*}
|\p^b_{l}(z)|
&\ll_{K,a}c_N\prod_{s\ne q}H_s^{1/2}
\prod_{s=1}^K\Bigl(\sum_{m_s\in\Z}\int_0^1
|\ft{q_{Nr_s,m_s}}(u_s)|\,\dd u_s\Bigr)\\
&\ll_{K,a}c_N\prod_{s\ne q}H_s^{1/2}\prod_{s=1}^KH_s^{-1/2}
=c_NH_q^{-1/2}\ll_K1,
\end{align*}
where we used Lemma \ref{lem:fouriermass} and that $c_N \asymp_K {N^{1/4}}$.
This finishes the proof.
}
\end{proof}

\subsection{Removing the divisibility condition}\label{sec:alldegrees}
Let $N\geq 0$ and put
$$
\H_{K,N}^{(i)} = \text{span}\{z^n : n\in \Z_{\geq 0}^K, \, n_1+\cdots+n_K=N, \,  n_K\geq i\}
$$
and set $\H_{K,N}:=\H_{K,N}^{(0)}{=\PolySpace_N}$. Let $b \in [0, \lcmk)$ {be} the residue of $N$ modulo $\lcmk$ and $N'=N-b$. We start by noticing that
\begin{align}\label{eq:Hfacesdecomp}
\H_{K,N} =  \H_{K,N}^{(b)} \bigoplus \bigg(\bigoplus_{0\leq i<b} z_K^i \H_{K-1,N-i} \bigg),
\end{align}
where this is an orthogonal decomposition such that each subspace $z_K^i \H_{K-1,N-i}$ is isometric to ${\H_{K-1,N-i}}$ (Lemma \ref{lem:facenorm} below). In the subspace $\H_{K,N}^{(b)}$ we have already constructed {a} uniformly bounded orthonormal basis by Propositions \ref{prop:shiftedONS} and \ref{prop:shiftedbound}. The idea then is to construct the final desired uniformly bounded orthonormal basis for all $N$ by a recursive method, using this decomposition,  and attaching to each $z_K^i \H_{K-1,N-i}$ the basis previously constructed in the lower dimension. {A priori} these will not be uniformly bounded, but they will have the property that the exceptional set has at most ${O_K(N^{K-2})}$ functions, all uniformly bounded by $O_K(\sqrt{N})$. In this situation one can appeal to Olevskii's absorption Lemma \ref{lem:absorption} (below), which applies a Haar rotation to redistribute the large functions among many other small basis {functions} to obtain a final uniformly bounded orthonormal set.

\begin{lemma}[The faces]\label{lem:facenorm}
{Let $K\ge2$, $N\ge1$ and $0\le i\le N$ be integers.} Put 
\begin{align}
c_{N,i}^2
:=\frac{\binom{N+K-1}{K-1}\binom Ni}
{\binom{N-i+K-2}{K-2}}.
\label{eq:facenorm}
\end{align}
Then  $c_{N,i}\asymp_{K,i} N^{(i+1)/2}$.
Moreover:
\begin{enumerate}
\item The map $\psi\mapsto c_{N,i}\,z_K^{\,i}\psi$ is a linear isometry
from ${\H_{K-1,N-i}} \subset  L^2(\Sphere^{2K-3})$ onto
$z_K^i \H_{K-1,N-i} \subset L^2(\Sphere^{2K-1})${.}
\item If ${\H_{K-1,N-i}}$ has an orthonormal basis
$\{\psi_1,{\ldots},\psi_{{\dim\H_{K-1,N-i}}}\}$ with
$\|\psi_\nu\|_{L^\infty(\Sphere^{2K-3})}\le C$, then the functions
$c_{N,i}z_K^{\,i}\psi_\nu$ form an orthonormal basis of $z_K^i \H_{K-1,N-i}$ satisfying
\begin{align}
\bigl\|c_{N,i}z_K^{\,i}\psi_\nu\bigr\|_{L^\infty(\Sphere^{2K-1})}
 \ll_{K,i}\ C\,N^{1/2}.
\label{eq:facesup}
\end{align}
\end{enumerate}
\end{lemma}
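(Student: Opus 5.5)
The argument rests on the monomial inner product formula \eqref{def:innerprod}, applied in dimension $K$ on $\Sphere^{2K-1}$ and in dimension $K-1$ on $\Sphere^{2K-3}$. Take $\psi=\sum_{m}c(m)z'^m\in\H_{K-1,N-i}$, where $z'=(z_1,\ldots,z_{K-1})$ and $|m|=N-i$. In $\H_{K-1,N-i}$ we have
\[
\|\psi\|^2=\binom{N-i+K-2}{K-2}^{-1}\sum_m |c(m)|^2\binom{N-i}{m}^{-1}.
\]
Now view $z_K^i\psi=\sum_m c(m)z^{(m,i)}$ as an element of $\PolySpace_N$ in $K$ variables. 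Since distinct monomials are orthogonal, its squared norm is
\[
\binom{N+K-1}{K-1}^{-1}\sum_m|c(m)|^2\binom{N}{(m,i)}^{-1}.
\]
The multinomial coefficients factor as
\[
\binom{N}{(m,i)}=\frac{N!}{m_1!\cdots m_{K-1}!\,i!}=\binom Ni\binom{N-i}{m}.
\]
Hence $\|z_K^i\psi\|^2=\binom{N+K-1}{K-1}^{-1}\binom Ni^{-1}\binom{N-i+K-2}{K-2}\|\psi\|^2$, that is, $\|c_{N,i}z_K^i\psi\|=\|\psi\|$. The map $\psi\mapsto c_{N,i}z_K^i\psi$ is linear and injective. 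Its image is exactly the span of the monomials $z^n$ with $n_K=i$, which is $z_K^i\H_{K-1,N-i}$. This proves (1).

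For the size of $c_{N,i}$, fix $K$ and $i$ and let $N\to\infty$. Then $\binom{N+K-1}{K-1}\asymp_K N^{K-1}$, $\binom Ni\asymp_i N^i$, and $\binom{N-i+K-2}{K-2}\asymp_{K,i}N^{K-2}$. The last estimate needs $N-i\ge0$, and the implied constants are uniform once $N\ge 2i$, say. The remaining finitely many $N$ are absorbed into the constant, since all quantities are positive. This gives $c_{N,i}^2\asymp_{K,i}N^{i+1}$.

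For (2), orthonormality and spanning of $\{c_{N,i}z_K^i\psi_\nu\}$ follow immediately from the isometry in (1). For the sup bound, write a point of $\Sphere^{2K-1}$ as $(z',z_K)$ with $|z'|^2+|z_K|^2=1$, and set $t=|z_K|^2\in[0,1]$. Homogeneity of degree $N-i$ gives $|\psi_\nu(z')|\le C(1-t)^{(N-i)/2}$, because $z'/|z'|$ lies on $\Sphere^{2K-3}$. (The case $z'=0$ is trivial when $N>i$; when $N=i$, $\psi$ is a constant of modulus $1$.) Therefore
\[
|c_{N,i}z_K^i\psi_\nu|\le C\,c_{N,i}\,t^{i/2}(1-t)^{(N-i)/2}.
\]
The main step is to bound this maximum. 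The function $t^{i}(1-t)^{N-i}$ is maximized at $t=i/N$, where its value is $\asymp_i N^{-i}$ (for $i=0$ it is just $1$). Combined with $c_{N,i}\asymp N^{(i+1)/2}$, this gives the bound $\ll_{K,i} C N^{1/2}$, which is \eqref{eq:facesup}. The only delicacy is keeping the constants uniform in $N$ for fixed $i$, which the explicit maximization handles.
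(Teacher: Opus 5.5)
Your proposal is correct and takes essentially the same route as the paper. The isometry comes from the monomial norm formula \eqref{def:innerprod} together with the factorization $\binom{N}{(m,i)}=\binom Ni\binom{N-i}{m}$, and the sup bound from homogeneity on $\Sphere^{2K-3}$ followed by maximizing $t^{i}(1-t)^{N-i}$ at $t=i/N$. Your two-sided estimate for $c_{N,i}$ and your treatment of the point $z'=0$ are a bit more explicit than the paper's, which only records the upper bound $c_{N,i}\ll_{K,i}N^{(i+1)/2}$ actually used.
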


\begin{proof}
Recall that
for every monomial $\psi(z_1,{\ldots},z_{K-1})$ of degree $N-i$ with individual degrees $m_1, \ldots, m_{K - 1}$ we have
$$
\| z_K^i \psi \|^2_{L^2(\Sphere^{2K - 1})} = \binom{N + K - 1}{K - 1}^{-1} \cdot \binom{N}{m_1, \ldots, m_{K - 1}, i}^{-1}
$$
and
$$
\| \psi \|^2_{L^2(\Sphere^{2K - 3})} = \binom{N - i + K - 2}{K - 2}^{-1} \binom{N - i}{m_1, \ldots, m_{K - 1}}^{-1}.
$$
Recall also the identity
$$
\binom{N}{m_1, \ldots, m_{K - 1} , i} = \binom{N}{i} \cdot \binom{N -i }{m_1, \ldots, m_{K - 1}}.
$$
Therefore,
$$
\| c_{N,i} z^i_K \psi \|^2_{L^2(\Sphere^{2K - 1})} = c_{N,i}^2 \| z^i_K \psi \|_{L^2(\Sphere^{2K - 1})}^2 = \| \psi \|_{L^2(\Sphere^{2K - 3})}^2.
$$
Distinct monomials remain orthogonal, so the same norm identity holds
for every $\psi\in{\H_{K-1,N-i}}$. This proves the isometry. 
Put $\tau=|z_K|^2$. 
Since $|z_1|^2 + \cdots + |z_K|^2 = 1$ we can write 
$(z_1, \ldots, z_{K - 1}) = (1 - \tau)^{1/2} w$ with $w \in \mathbb{S}^{2K - 3}$.
Since $\psi$ is homogeneous of degree $N - i$, 
$$|\psi(z_1, \ldots, z_{K - 1})| = (1 - \tau)^{(N - i) / 2} |\psi(w)| \leq C (1 - \tau)^{(N - i) / 2}.$$ 
Moreover it is easy to see that $c_{N,i} \ll_{K,i} N^{(i + 1) / 2}$. It follows that
\[
| c_{N,i} z_K^i \psi_\nu(z_1,{\ldots},z_{K-1})|\ll_{K,i}  C N^{(i+1)/2} \tau^{i/2}(1-\tau)^{(N-i)/2}.
\]
The maximum of $\tau^i(1-\tau)^{N-i}$ occurs at $\tau=i/N$ and this produces the final bound  $\ll_{K,i} CN^{1/2}$.
\end{proof}
The term $N^{1/2}$ in \eqref{eq:facesup} is sharp. However, for
$0\le i<b$ the orthonormal bases $\{c_{N,i}z_K^i\psi_\nu\}$ of
$z_K^i\H_{K-1,N-i}$ given in Lemma~\ref{lem:facenorm} consist of
\[
\sum_{0\le i<b}\dim\H_{K-1,N-i}
=\sum_{0\le i<b}\binom{N-i+K-2}{K-2}\ll_K N^{K-2}
\]
functions, whereas $\dim\H_{K,N}=\binom{N+K-1}{K-1}\asymp_K N^{K-1}$.
Because there are only a few of them, an orthogonal change of basis that mixes them with the uniformly bounded functions $\p^b_{l}$ produces an orthonormal basis of $\H_{K,N}$ that is uniformly bounded. This is Olevskii's
absorption scheme \cite{Olv1}, used by Bourgain \cite[Section~3]{Bour2}
for $K=3$ and $N$ odd. We include a self-contained proof. 

\begin{lemma}[Absorption Lemma]\label{lem:absorption}
Let $D \geq 2$. Let $X$ be a set. Let $V$ be a $D$-dimensional complex vector space
of functions $f: X\to\mathbb C$.
Let $\p_1,{\ldots},\p_D$ be an orthonormal
basis of $V$ with respect to an inner product $\langle \cdot, \cdot \rangle : V \times V \rightarrow \mathbb{C}$. Let $A,B\ge0$ and let $D_2$ be an integer with
$1\le D_2\le D/2$. Suppose
\[
\sup_{x \in X} |\p_j(x)| \leq B \quad(1\le j\le D_2)
{\quad \text{and} \quad}
\sup_{x \in X} |\p_j(x)| \le A \quad(D_2<j\le D).
\]
Then $V$ has an orthonormal basis $\psi_1,{\ldots},\psi_D$ with respect to 
the inner product $\langle \cdot, \cdot \rangle$ such that
\[
\max_{1\le s\le D} \sup_{x \in X} |\psi_s(x)|
\le 10A+10B\sqrt{D_2/D}.
\]
\end{lemma}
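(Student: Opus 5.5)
The plan is Olevskii's Haar-rotation construction. Put $m:=\lfloor D/D_2\rfloor\ge 2$, so $mD_2\le D$ and $m\ge D/(2D_2)$. Group the indices into $D_2$ disjoint blocks $G_1,\dots,G_{D_2}$ of size $m$. Block $G_t$ contains the bad index $t$ together with $m-1$ good indices taken from $\{D_2+1,\dots,D\}$. This is possible because $D_2+D_2(m-1)=D_2m\le D$. Any good indices left over are kept unchanged as members of the new basis.

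Within each block we apply a fixed unitary $m\times m$ matrix $W=(w_{ab})$ with all entries of modulus comparable to $m^{-1/2}$. The natural choice is the normalized DFT matrix $w_{ab}=m^{-1/2}e(ab/m)$. One could also use a Haar-type matrix, but the DFT is simpler because every entry has modulus exactly $m^{-1/2}$. Label the block as $G_t=\{g_0=t,g_1,\dots,g_{m-1}\}$ and set $\psi_{t,a}=\sum_b w_{ab}\p_{g_b}$.

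Unitarity of $W$, together with the disjointness of the blocks, shows that the $\psi$'s together with the untouched good $\p_j$ form an orthonormal basis of $V$. This holds for any inner product, since the change of basis is unitary on each block and the identity elsewhere.

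For the sup bound, fix $x\in X$. Then $|\psi_{t,a}(x)|\le m^{-1/2}|\p_t(x)|+\bigl|\sum_{b\ge1}w_{ab}\p_{g_b}(x)\bigr|$. The first term is at most $B m^{-1/2}\le B\sqrt{2D_2/D}$.

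The second term is the main obstacle. The trivial bound $m^{-1/2}(m-1)A\approx \sqrt m A$ is too weak, so pointwise cancellation is needed, and a fixed unitary matrix does not give it for arbitrary functions. The fix is to use a different decomposition. Write $\psi_{t,a}=w_{a0}\p_t+\sum_{b\ge1}w_{ab}\p_{g_b}$. Here $\sum_{b}w_{ab}\p_{g_b}$ is the $a$-th row of $W$ applied to the vector $(\p_{g_b}(x))_b$. Its $\ell^2$ norm over $a$ is controlled, but its $\ell^\infty$ norm over $a$ is not.

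So the remedy is a unitary that only mixes the single bad vector. Take a unitary $W$ whose first column is $(m^{-1/2},\dots,m^{-1/2})$, for example the Householder reflection $W=I-2uu^*$ sending $e_0$ to $m^{-1/2}\mathbf 1$. Its off-first-column part is then $I$ plus a rank-one correction with entries $O(m^{-1/2})$ times a vector whose entries are $O(m^{-1/2})$. Concretely, $u\propto e_0-m^{-1/2}\mathbf 1$ has norm $\asymp1$, so each entry of $uu^*$ on the good coordinates is $O(1/m)$.

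With this choice, $\psi_{t,a}=(\text{const}\cdot m^{-1/2})\p_t+\p_{g_a}\mathbf 1_{a\ge1}+\sum_{b\ge1}O(1/m)\,\p_{g_b}$. This is bounded by $Bm^{-1/2}+A+(m-1)\cdot O(1/m)A\ll A+B\sqrt{D_2/D}$.

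Tracking the constants of the Householder vector ($|u_0|\le 1$, $|u_b|\le m^{-1/2}$ up to normalization with norm at least $1/\sqrt2$ for $m\ge2$) should give the stated bound $10A+10B\sqrt{D_2/D}$.
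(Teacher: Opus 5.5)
\textbf{There is a genuine gap in the sup bound.} Your block setup ($m=\lfloor D/D_2\rfloor$, one bad vector per block, a unitary mixing inside each block) is sound, and so is the orthonormality argument. The Householder fix, however, fails.

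Since $u=(e_0-m^{-1/2}\mathbf 1)/\|e_0-m^{-1/2}\mathbf 1\|$ is real, $W=I-2uu^{*}$ is symmetric. Its first \emph{row} therefore equals its first column $m^{-1/2}\mathbf 1$. Concretely, $\|e_0-m^{-1/2}\mathbf 1\|^2=2(1-m^{-1/2})$, $u_0\asymp1$ and $u_b\asymp m^{-1/2}$, so $W_{0b}=-2u_0u_b=m^{-1/2}$ for every $b\ge1$.

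Your claim that the good-coordinate entries of $uu^{*}$ are $O(1/m)$ holds only in the block $a,b\ge1$. There $W_{ab}=\delta_{ab}-\frac{1}{m-\sqrt m}$, and indeed $|\psi_{t,a}|\le m^{-1/2}B+(2+m^{-1/2})A$ for $a\ge1$. It fails in the row $a=0$, where
\[
\psi_{t,0}=m^{-1/2}\bigl(\varphi_t+\varphi_{g_1}+\cdots+\varphi_{g_{m-1}}\bigr).
\]
This is exactly the DFT obstruction you identified yourself. The only available bound is $m^{-1/2}B+(m-1)m^{-1/2}A\approx\sqrt m\,A$. The lemma allows arbitrary $X$ and an arbitrary inner product, so nothing prevents $\varphi_{g_b}(x_0)=A$ for all $b\ge1$ at some point $x_0$, and the bound is attained. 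Since $m$ is unbounded (in the application $m\asymp_K N$), the construction does not give $10A+10B\sqrt{D_2/D}$.

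\textbf{What is needed.} You need a unitary matrix with a flat first column, to dilute the bad vector, \emph{and} with every row of bounded $\ell^1$-norm on the good coordinates. No symmetric matrix can do this, since a flat first column forces a flat first row. The DFT fails for the same reason.

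The Haar-type matrix you set aside is precisely the right tool. Each row of $R_{2^n}$ has first entry $2^{-n/2}$ and exactly one nonzero entry, of modulus $2^{(\ell-n)/2}$, in each dyadic range $2^\ell<j\le2^{\ell+1}$. So every row's $\ell^1$-norm is a geometric series bounded by an absolute constant, and its mass on the first $d$ coordinates is at most $5\sqrt d\,2^{-n/2}$.

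The paper applies a single $R_{2^n}$ with $2^n\le D<2^{n+1}$ and places all $D_2$ bad vectors in the first $D_2$ slots. This yields $5B\sqrt{D_2/2^n}+5A\le10A+10B\sqrt{D_2/D}$. Your blockwise scheme would also work if you used a Haar matrix of size $2^{\lfloor\log_2 m\rfloor}\ge m/2$ in each block, at the cost of slightly worse but still admissible constants.
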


\begin{proof}
Let $R_1=(1)$ and, for $n\ge1$, define
\begin{align}
R_{2^n}:=\frac1{\sqrt2}
\begin{pmatrix}
R_{2^{n-1}}&I_{2^{n-1}}\\
R_{2^{n-1}}&-I_{2^{n-1}}
\end{pmatrix},
\label{eq:haarrecursion}
\end{align}
where $I_m$ is the $m\times m$ identity matrix.
It can be seen by induction that this is an orthogonal matrix. 
Indeed if $R_{2^{n - 1}} R_{2^{n - 1}}^T = I_{2^{n - 1}}$ then 
$$
R_{2^n} R_{2^n}^{T} = \frac{1}{2} \begin{pmatrix}
R_{2^{n - 1}} R_{2^{n - 1}}^T + I_{2^{n - 1}} & R_{2^{n - 1}} R_{2^{n - 1}}^T - I_{2^{n - 1}} \\ 
R_{2^{n - 1}} R_{2^{n - 1}}^T - I_{2^{n - 1}} & R_{2^{n - 1}} R_{2^{n - 1}}^T + I_{2^{n - 1}}
\end{pmatrix}  = I_{2^n}.
$$ 
Let $(r_{s,1},{\ldots},r_{s,2^n})$ be a row of $R_{2^n}$.
It is easy to see from the inductive definition that $r_{s,1}=2^{-n/2}$. 
Moreover, for each $0\le\ell<n$,
exactly one entry with $2^\ell<j\le2^{\ell+1}$ is nonzero,
of modulus $2^{(\ell-n)/2}$. This also follows by induction from
\eqref{eq:haarrecursion}: the first half of a row is a row of
$R_{2^{n-1}}$ divided by $\sqrt2$, and the second half has
a single nonzero entry of modulus $2^{-1/2}$. Consequently,
if $2\le d\le2^n$ and $2^L<d\le2^{L+1}$, then
\begin{align}
\sum_{j\le d}|r_{s,j}|
\le2^{-n/2}\left(1+\sum_{\ell=0}^L2^{\ell/2}\right)
\le5\sqrt d\,2^{-n/2}.
\label{eq:haartail}
\end{align}
The same bound holds for $d=1$. Now let $n=\lfloor\log D / \log 2\rfloor$, so that
$2^n\le D<2^{n+1}$, in particular, $D_2\le D/2<2^n$. Put
\[
Q = (Q_{s,j})_{\substack{1 \leq s \leq D \\ 1 \leq j \leq D}} := \begin{pmatrix} R_{2^n} & 0 \\ 0 & I_{D - 2^n} \end{pmatrix}
  \quad \text{and} \quad
\psi_s:=\sum_{j=1}^D Q_{s,j}\p_j.
\]
Since $Q Q^T = I_D$, the $\psi_i$ are orthonormal:
$$
\langle \psi_i, \psi_j \rangle = \sum_{1 \leq s, t \leq D} Q_{i, s} Q_{j, t} \langle \p_s, \p_t \rangle = \sum_{s = 1}^{D} Q_{i,s} Q_{j, s} = \delta_{i j}.
$$
Applying \eqref{eq:haartail} with $d=D_2$
and $d=2^n$ gives for $s \leq 2^n$
\begin{align*}
\sup_{x \in X} |\psi_s(x)|
\le B\sum_{j\le D_2}|Q_{s,j}|
   +A\sum_{D_2<j\le2^n}|Q_{s,j}|
\le5B\sqrt{D_2}\,2^{-n/2}+5A
\le10A+10B\sqrt{D_2/D}.
\end{align*}
For $s>2^n$ we have $\psi_s=\p_s$ and $|\psi_s(x)| \leq A$ for all $x \in X$, which finishes the proof.
\end{proof}

\subsection{Proof of Theorem \ref{thm:onbexplicit} assuming Theorem \ref{thm:box} and Proposition \ref{prop:UNmap}}
\begin{proof}
(1) and (2) follow from Propositions \ref{prop:shiftedONS} and ~\ref{prop:shiftedbound} applied to $b=0$. 
Fix $a$ and $\alpha$ as in the theorem. We will prove (3) by induction on $K$.  

\emph{Base case.} For $K=2$ we have $\lcmk = 1$. Therefore we can simply apply Propositions \ref{prop:shiftedONS} and ~\ref{prop:shiftedbound}, which already prove that the orthonormal set $\Psi_{2,N}:=\{\p^0_{l}: l=0,1,{\ldots}, N\}$, given by Definition ~\ref{def:shiftedbasis}, is uniformly bounded and spans the whole space $\H_{2,N}$.

\emph{Inductive step.} Let ${K\ge3}$ and assume that for all $N \geq 0$
there exists an orthonormal basis $\Psi_{K - 1, N}$ of $\H_{K-1,N}$
such that for every $\psi \in \Psi_{K - 1, N}$ we have $\| \psi \|_{L^{\infty}(\Sphere^{2K - 3})} \leq C_{K - 1}$.
We assume without loss of generality that $N \geq 2 (K - 1) \lcmk$, as for the smaller $N$ any basis would do.
Let $b \in [0, \lcmk)$ be the residue of $N$ modulo $\lcmk$ and $N'=N-b$.
Recall the orthogonal decomposition  \eqref{eq:Hfacesdecomp},
$$\H_{K,N} =  \H_{K,N}^{(b)} \bigoplus \left(\bigoplus_{0\leq i<b} z_K^i \H_{K-1,N-i} \right).$$
By Propositions \ref{prop:shiftedONS} and \ref{prop:shiftedbound} the subspace $\H_{K,N}^{(b)}$ admits the orthonormal basis $\{\p^b_{l}:l\in\B_{N'}\}$ with
$$
\| \p^b_{l} \|_{L^{\infty}(\Sphere^{2K - 1})} \leq A := A(K,a).
$$
If $b=0$ this is already the required basis of $\H_{K,N}$, so assume $b\ge1$.
By Lemma~\ref{lem:facenorm}(1) the following is an orthonormal basis for $\H_{K,N}$:
\[
\Phi_{K,N}= \bigcup_{0\leq i  < b} \Bigl(c_{N,i}\,z_K^{\,i} \Psi_{K-1,N-i}\Bigr) \cup \{\p^b_{l} : l\in \B_{N'}\},
\]
where $c_{N,i}$ are the constants in \eqref{eq:facenorm}.
By Lemma~\ref{lem:facenorm}(2) and the inductive hypothesis, for each $0\le i<b$ and each $\psi \in \Psi_{K - 1, N - i}$ we have
$$
\| c_{N,i} z_K^i \psi \|_{L^{\infty}(\Sphere^{2K-1})} \leq B
$$
with $B \ll_{K,a} N^{1/2}$.
Put
$$
D_2:=\dim \left(\bigoplus_{0\leq i<b} z_K^i \H_{K-1,N-i}\right) =\sum_{i=0}^{b-1}\binom{N-i+k-1}{k-1} \ll_K N^{K - 2}
$$
and
$$
D:=\dim \H_{K,N}=\binom{N+k}{k} \asymp_K N^{K - 1},
$$
so that $1\le D_2\le D/2$ (since $N \geq 2 (K - 1) \lcmk$).
Then Lemma~\ref{lem:absorption} gives an orthonormal basis $\Psi_{K,N}$ of $\H_{K,N}$ such that
for all $\psi \in \Psi_{K,N}$ we have
\[
 \|\psi\|_{L^\infty(\Sphere^{2K-1})} \leq 10A+10B\sqrt{D_2/D} \ll_{K,a}  1.
\] 
This completes the inductive step, and therefore finishes the proof of Theorem \ref{thm:onbexplicit}.  
\end{proof}

\section{Quadratic exponential sums: Proof of Theorem~\ref{thm:box} for balanced boxes}\label{sec:quadratic}

In this section we prove Theorem ~\ref{thm:box} under the extra assumption that the {summation set} is balanced (as defined below).
 
\begin{definition}\label{def:balanced}
Let $k\geq1$ and $R\ge1$. Let $I=\prod_{j=1}^kI_j$ be a box and
$L\subset\Z^k\times\Z$ a finite set of constraints. We say that the pair
$(L,I)$ is \emph{$R$-balanced} if
\[
 {\#L\le R \quad \text{and} \quad 0<|v|_1\le R \quad \text{for all }(v,b)\in L},
\]
and
\[
 \#I_i\le R\,\#I_j\ \ \text{ for all } i,j{.}
\]
The locations of the intervals and the constants $b$ are unrestricted. 
\end{definition} 
 
The following is the main result of this section.

\begin{theorem}\label{thm:general}
Let $k,R\geq1$ be integers. Let $(L,I)$ be an $R$-balanced pair.  Let $\alpha = [a_0; a_1, a_2, \ldots]$ 
be an irrational number with $a_i \leq R$ for all $i \geq 1$
and $c_1,{\ldots},c_k$ be positive integers with
$c_j\le R$.  Then
\[
\sup_{\theta\in\mathbb R^k} \Bigl|\sum_{n\in \cP(L) \cap I}e\Bigl(\alpha\sum_{j=1}^kc_jn_j^2
 +\theta\cdot n\Bigr)\Bigr|\ll_{k,R}\vol(I)^{1/2}.
\]
\end{theorem}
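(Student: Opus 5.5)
We first remove the constraints. Since $(L,I)$ is $R$-balanced, every side satisfies $\#I_j\asymp_{k,R} M$, where $M:=\max_j\#I_j$, and so $\vol(I)\asymp_{k,R}M^k$. The indicator of $\cP(L)\cap I$ is a product of at most $R$ half-space indicators $\mathbf 1[v\cdot n+b\ge0]$ with $|v|_1\le R$. Each such indicator depends only on the integer linear form $t=v\cdot n$, which ranges over an interval of length $\ll_{k,R} M$ on $I$. We expand each indicator in additive characters of $t$. A sharp cutoff of an interval of length $T$ has Fourier coefficients of total $\ell^1$ mass $\ll\log T$, which would cost a logarithm. To avoid this we follow the scheme of Lemma~\ref{lem:fouriermass}: split each half-space dyadically according to the distance of $v\cdot n+b$ from $0$, and write the constraint as a smooth partition. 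The near-boundary layers, where $|v\cdot n+b|\le 2^{m}$, are again $\Z$-polytopes, but they are thin slabs. So the plan proceeds by induction: we bound the sum over a slab of width $W$ by $\ll (W/M)^{1/2}\vol(I)^{1/2}$ times a summable factor. Each smooth dyadic piece, after Fourier inversion in $t$, becomes a twist $e(\beta v\cdot n)$ of the full box sum, and this twist is absorbed into $\theta$ since the supremum is over all $\theta$. The resulting $\ell^1$ Fourier mass is $O(1)$ per dyadic scale, with geometric decay in $m$ coming from the square-root gain $(2^m/M)^{1/2}$. Summing over $m$ therefore gives $O(1)$, and no logarithm appears.

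Next we handle the box with a twist and no constraints. There the sum factorizes as $\prod_{j}\sum_{n_j\in I_j}e(\alpha c_jn_j^2+\theta_jn_j)$. Each factor is a one-dimensional quadratic Weyl sum with leading coefficient $c_j\alpha$. The number $c_j\alpha$ again has bounded partial quotients, with bound depending on $c_j\le R$ and $a_i\le R$; this is a classical fact, since multiplying by a fixed integer preserves badly approximable numbers with controlled constant. So the Hardy--Littlewood bound quoted in the introduction gives $\ll_R(\#I_j)^{1/2}$ uniformly in $\theta_j$ and in the location of $I_j$. Taking the product gives $\vol(I)^{1/2}$.

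The main obstacle is the thin slabs $\{n\in I: 0\le v\cdot n+b<W\}$, where the constraint genuinely interacts with the quadratic phase. Here I would change variables: complete $v/\gcd(v)$ to a unimodular matrix $A\in\GL_k(\Z)$ with $|A|_\infty\ll_{k,R}1$, so that $v\cdot n$ becomes the first new coordinate. The slab becomes a union of $W$ hyperplane sections. The quadratic form $\alpha\sum c_jn_j^2$ becomes $\alpha Q(m)$ for an integral positive definite form $Q$ with bounded coefficients. On each section $m_1$ is fixed, and we obtain a $(k-1)$-dimensional quadratic sum with irrational leading part $\alpha Q'$, over a $\Z$-polytope of diameter $\asymp M$ with boundedly many constraints. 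We then diagonalize $Q'$ over $\Q$ with bounded denominators, splitting into residue classes modulo a bounded integer, so that the phase becomes $\alpha\sum c'_j m_j^2$ plus linear terms. This reduces the section to a lower-dimensional instance of the same problem on a polytope inscribed in a balanced box of side $\asymp M$. Induction on $k$ then gives each section $\ll M^{(k-1)/2}$, hence each slab $\ll W M^{(k-1)/2}$, which is not quite $(W/M)^{1/2}M^{k/2}$. To close this gap I would apply Cauchy--Schwarz in $m_1$ and expand $|\sum_{m_1}\cdot|^2$ as a van der Corput differencing in the $m_1$-direction. The differenced phase is linear in $m_1$ with frequency $2\alpha h\cdot(\text{bounded integer})$, and bounded partial quotients give $\sum_{h\le W}\min(W,\|c\alpha h\|^{-1})\ll W\log W$. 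The key risk is that this step loses a logarithm; the alternative route is to invoke the sectional structure, i.e. the $\sect$ bound of Theorem~\ref{thm:box}, for thin slabs as part of the same induction. I expect this is exactly where the bounded-partial-quotient hypothesis must be used sharply, through the Hardy--Littlewood approximate functional equation applied in the $m_1$ variable.
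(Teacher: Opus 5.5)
There is a genuine gap. Your argument rests on a slab estimate: that the sum over $\{n\in I:\ 0\le v\cdot n+b<W\}$ is $\ll (W/M)^{1/2}\vol(I)^{1/2}$. You never prove it.

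\textbf{Step 1 needs more than the slab estimate for near-boundary layers.} Once you Fourier-invert a smooth dyadic piece $\psi(t/2^m)$ and absorb $e(\beta v\cdot n)$ into $\theta$, you are bounding a twisted \emph{full-box} sum, which is only $\ll\vol(I)^{1/2}$. The localization to $t\sim 2^m$ is lost, so each of the $\asymp\log M$ scales costs $\vol(I)^{1/2}$ and the logarithm comes back. The factor $(2^m/M)^{1/2}$ would have to come from the slab estimate applied to the smooth pieces too.

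\textbf{The slab estimate itself is not established.} Induction on dimension gives only $WM^{(k-1)/2}$, because the section sum $T(m_1)$ depends on $m_1$ through the mixed constraints. Differencing in $m_1$ does not repair this. For fixed $h$ and fixed long variables $m',m''$, the $m_1$-sum runs over an interval and is $\ll\min(W,\|2c\alpha h\|^{-1})$. But the sums over $m',m''$ are then trivial, giving only $|S|^2\ll M^{2(k-1)}W\log W$. That forfeits all cancellation in the $k-1$ long directions, which is far more than a logarithm.

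\textbf{Invoking the sectional bound of Theorem~\ref{thm:box} is circular.} The paper deduces that bound from Theorem~\ref{thm:general}. Moreover, a slab crossing a corner of $I$ contains regions that are again $k$-dimensional balanced polytopes, so induction on dimension alone cannot supply it.

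What is missing is the mechanism that actually removes the logarithm: cancellation in the \emph{dual} variables attached to the cutoffs, not spatial localization. The paper argues as follows.
\begin{enumerate}
\item Choose a convergent $a/q$ with $q\asymp_R M$. Write each constraint as $\operatorname{sgn}(2(w_i\cdot m-B_i)+1)$ and expand it exactly in characters modulo $2q$, with coefficients $|\widehat c_{2q}(v)|\ll 1/|v|$ (Lemma~\ref{lem:sgn}).
\item The sum then factors over coordinates. Poisson summation modulo $q$ (Lemma~\ref{lem:poisson}) produces Gauss sums $G(c_ja,\ell_j+g_j(v);q)=\sqrt q\,\phi_{a,q}(\ell_j+g_j(v);c_j)\,e\bigl(-\overline a(\ell_j+g_j(v))^2/(4c_jq)\bigr)$ (Lemma~\ref{lem:gaussphase}), where $g_j(v)=\sum_iv_iw_{ij}$.
\item So the cutoff frequencies $v$ enter through a quadratic phase whose coefficient is a bounded rational multiple of $\overline a/(4q)$. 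Since $\overline a\equiv\pm q_{s-1}\pmod{q}$ and $q_{s-1}/q=[0;a_s,\ldots,a_1]$, this coefficient is badly approximable at every scale below $q$ (Lemma~\ref{lem:weylgauss}).
\item Hence a dyadic block $|v_i|\sim2^{J}$, with weights of size $2^{-J}$, contributes $\ll(1+J)^{1/2}2^{-J/2}$ after Abel summation. The sum over blocks converges, and the coefficient sum $\Sigma_\ell$ is $O(1)$ rather than the trivial $(\log q)^r$.
\end{enumerate}
This yields $q^{k/2}\asymp_R\vol(I)^{1/2}$ with no slab estimate and no induction on dimension. Your instinct about an approximate functional equation points in the right direction. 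However, it must be applied in all coordinates at the single scale $q$, and the decisive input is the arithmetic of $\overline a/q$.
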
 

The analytic core of Theorem \ref{thm:general} is the following proposition.
 
\begin{proposition}\label{prop:multiwall}
Let $A, C, V, k \geq 1$ and $r \geq 0$.
Let $\alpha = [a_0; a_1, a_2, \ldots]$ be an irrational number with
$a_i \leq A$ for all $i \geq 1$, and let $a/q$ be a convergent of
$\alpha$. Let $c_1,{\ldots},c_k$ be integers with $1\le c_j\le C$, let
$1 \leq P \leq q$ be an integer, let $w_1,{\ldots},w_r\in\Z^k\setminus\{0\}$ with
$|w_i|_1\le V$, and let $B_1,{\ldots},B_r$ be integers such that
\[
|w_i\cdot h-B_i|\le\frac q4
\qquad\text{whenever } |h_j|\le2P \text{ for all } j.
\]
Let $U$ be smooth and supported in $[-2,2]$, and let
$\beta\in\RR^k$. Then
\[
\Bigl|\sum_{h\in\mathbb Z^k}\ \prod_{i=1}^r
\operatorname{sgn}\bigl(2(w_i\cdot h-B_i)+1\bigr)
\prod_{j=1}^kU\Bigl(\frac{h_j}P\Bigr)\,
e\Bigl(\alpha\sum_{j=1}^kc_jh_j^2+\beta\cdot h\Bigr)\Bigr|
\ \ll\ q^{k/2},
\]
where the implied constant depends only on $A$, $k$, $r$, $V$, $C$ and
$\|U^{(m)}\|_{L^1}$ for $0\le m\le2$.

\end{proposition}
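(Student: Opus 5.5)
Throughout, $\|x\|$ denotes the distance from $x$ to the nearest integer, and we use $|\alpha - a/q| \le 1/q^2$ for the convergent $a/q$. The plan is to remove the sign cutoffs by a Fourier expansion adapted to period $q$. After that, the sum factors into one-dimensional quadratic Weyl sums of length $\asymp P\le q$. Each of these is bounded by Hardy--Littlewood-type estimates at scale $q$.

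\textbf{Step 1: expand the signs.} The hypothesis $|w_i\cdot h-B_i|\le q/4$ on the support means only the values of $x_i:=w_i\cdot h-B_i$ in $[-q/4,q/4]$ matter. So replace $\operatorname{sgn}(2x+1)$ by a $q$-periodic function $g$ on $\Z$ that agrees with it on $|x|\le q/4$. Choose $g$ to be a trapezoid: equal to $\pm1$ on the relevant ranges, with linear transitions of length $\asymp q$ near $\pm q/2$. Then $g(x)=\sum_{t \bmod q}\hat g(t)e(tx/q)$ with $|\hat g(t)|\ll \min(1,\|t/q\|^{-1}q^{-1}\cdot \|t/q\|^{-1}q^{-1}\cdot q)$. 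More simply, a jump at $0$ plus Lipschitz tails gives $\hat g(t)\ll 1/(1+|t|)$ for $|t|\le q/2$. This is the usual $\ell^1$ loss of $\log q$, which we cannot afford, so a sharper device is needed. We keep the sharp jump at $0$ (it is genuinely there), but exploit that after substitution the phase shift is $t\,w_i/q$. The multi-fold expansion gives
\[
\sum_{t\in(\Z/q)^r}\prod_i\hat g(t_i)\,e\Bigl(-\sum_i t_iB_i/q\Bigr)\,S\Bigl(\beta+\sum_i t_iw_i/q\Bigr),
\]
where $S(\gamma)=\prod_j\sum_{h_j}U(h_j/P)e(\alpha c_jh_j^2+\gamma_jh_j)$.

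\textbf{Step 2: one-dimensional bound with decay in $\gamma$.} For a single factor, Weyl differencing (or van der Corput with Poisson) using $|\alpha-a/q|\le 1/q^2$, bounded partial quotients (so $q_{\text{next}}\ll_A q$), and $P\le q$ should give
\[
\Bigl|\sum_hU(h/P)e(\alpha ch^2+\gamma h)\Bigr|\ll \sqrt q\cdot \min\bigl(1,\ \text{(distance of }\gamma\text{ from a grid of spacing}\asymp 1/q)\bigr)\text{-type decay}.
\]
More precisely, approximating $\alpha c\approx ac/q$ and completing the sum mod $q$ gives Gauss sums of size $\sqrt q$, weighted by $\hat U$-type factors. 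These localize $\gamma$ to within $O(1/P)$ of points $\frac{\text{residue}}{q}$, with decay $(1+P\|\gamma-\cdot\|)^{-2}$, coming from $\|U''\|_{L^1}$. The error from $\alpha-ac/q$ is controlled since $P^2|\alpha-a/q|\le 1$.

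\textbf{Step 3: sum over $t$.} This is the main obstacle: we must show that summing $\prod_i|\hat g(t_i)|$ against the product of the localized profiles from Step 2 costs only $O(1)$ rather than $(\log q)^r$. The idea is that the profile for $S(\beta+\sum t_iw_i/q)$, as $t$ varies, is a Gauss-sum magnitude that is essentially $t$-independent, namely $\sqrt q$ per coordinate. So the bound would be $q^{k/2}\sum_t\prod|\hat g(t_i)|$, which diverges logarithmically. To avoid this, I would instead not expand fully. Write each sign as a smooth $q$-periodic part, whose Fourier coefficients are in $\ell^1$ uniformly, plus a part $\operatorname{sgn}$ minus smooth, supported on $|x_i|\le q^{\epsilon}$-type strips or handled by the partial-summation structure of the Gauss sum. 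Concretely, first evaluate the sum exactly via the completion: the $h$-sum with the $a/q$ phase becomes a sum over residues. Then apply the sign cutoffs at the level of the dual variable, where the quadratic phase $e(\bar{4ac}\,m^2/q)$ with bounded-quotient $\alpha$ gives square-root cancellation over any interval by induction on the continued fraction (Hardy--Littlewood). Repeating this over the $r$ half-spaces, with induction on $r$, should give $\ll q^{k/2}$ with constants depending on $A,k,r,V,C$ and $\|U^{(m)}\|_{L^1}$.
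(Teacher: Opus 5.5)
Your Steps 1--2 match the paper's setup: expand each sign in a Fourier series of period $\asymp q$, factor over coordinates, and turn each one-dimensional sum into Gauss sums by Poisson. You also correctly locate the crux, namely that the trivial bound $\sum_t\prod_i|\hat g(t_i)|$ costs $(\log q)^r$. Your ``Concretely'' sentence points at the right mechanism. After Poisson one shifts the dual variable, $\ell_j\mapsto\ell_j+\sum_it_iw_{ij}$, so that the $W_U$-weights no longer depend on $t$. Then $t$ enters only through $\prod_jG(c_ja,\ell_j+\sum_it_iw_{ij};q)/\sqrt q$, whose phase is quadratic in $t$, and one must show this sign-frequency sum is $O(1)$ uniformly in $\ell$.

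That is exactly the step you leave as ``should give'', and the tools you name do not deliver it:
\begin{enumerate}
\item The smooth/rough split cannot work. Any $q$-periodic function agreeing with $\operatorname{sgn}(2x+1)$ outside strips of width $\delta$ has Fourier coefficients of $\ell^1$-norm $\gg\log(q/\delta)$. So uniform $\ell^1$ control forces $\delta\asymp q$, and $q^{\epsilon}$-strips gain nothing.
\item The phase $e(\overline{4ac}\,m^2/q)$ is undefined when $\gcd(4c,q)>1$ (e.g.\ $q$ even). One needs $G(ca,b;q)=\sqrt q\,\phi(b)\,e(-\bar a b^2/(4cq))$ with $\phi$ bounded and $2c$-periodic, and then fixes $t$ in residue classes modulo $2c_1\cdots c_k$.
\item The cancellation required is for the rational coefficient $\frac mn\cdot\frac{\bar a}{4q}$, not for $\alpha$, so Hardy--Littlewood does not apply as stated. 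The missing arithmetic input is that $\bar a\equiv\pm q_{s-1}\pmod q$ with $q_{s-1}/q=[0;a_s,\dots,a_1]$. Hence $\|t\bar a/q\|\gg_A 1/t$ for $1\le t<q$. One Weyl-differencing step then gives $\ll\sqrt{R\log 2R}$ over progressions in intervals of length $R\le 2q$, uniformly in the linear coefficient.
\end{enumerate}

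The most serious gap is the case of several constraints, which ``induction on $r$'' does not handle. The $t$-sum is $r$-dimensional, with weights $\ll\prod_i|t_i|^{-1}$ and a quadratic form containing cross terms $t_it_{i'}$. Cancelling in one variable while summing the other $r-1$ trivially still loses $(\log q)^{r-1}$.

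The paper's device is a dyadic decomposition $2^{J_i}\le\pm t_i<2^{J_i+1}$. In each block it freezes every variable except one with the largest $J_i$, so the cross terms become linear. It then applies Abel summation together with the Weyl bound above. This uses the exact period-$2q$ expansion, whose coefficients $-\frac iq\cot\frac{\pi v}{2q}$ are, up to the factor $-i$, real, monotone and $\ll 2^{-J_i}$ on the block. Each frozen variable contributes only $O(1)$ per dyadic block. This gives $\ll(1+u)^{1/2}2^{-u/2}$ for a block with $u=\max_iJ_i$. Since only $\ll(u+1)^{r-1}$ blocks have maximum $u$, the sum over $t$ is $O(1)$, and summing the $W_U$-weights over $\ell$ yields $q^{k/2}$. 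Without these ingredients your Step 3 remains a heuristic rather than a proof.
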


Section~\ref{sec:general} deduces Theorem~\ref{thm:general} from
Proposition~\ref{prop:multiwall}. The remaining sections prove
Proposition~\ref{prop:multiwall}: Sections
\ref{sec:lemmas} and~\ref{sec:arith} collect the analytic and
arithmetic inputs{.} Section~\ref{sec:proofmulti} gives the proof.

\subsection{Proof of Theorem~\ref{thm:general}}\label{sec:general}
\begin{proof}[Proof of Theorem~\ref{thm:general}, assuming Proposition~\ref{prop:multiwall}]
Put $H:=\min_j\#I_j$ so that $H \leq \# I_j \leq R H$ for all $j$.  Adjoin to $L$ the bounds of $I$ and set $P=\cP(L)$.
Let $a / q$ with $(a,q) = 1$ be the convergent of $\alpha$ with
the smallest denominator $q \geq 12 R^2 H$. Since the continued fraction expansion
of $\alpha$ has $a_i \leq R$ for all $i \geq 1${, we have} 
$$
q \leq 100 R^3 H.
$$
Notice that $P \subset x + [0, R H]^k$ for some $x \in \mathbb{Z}^k$. 
Pick a smooth function $0 \leq U \leq 1$ such that $U(x) = 1$ for $0 \leq x \leq 1$
and $U$ is compactly supported in $[-2,2]$. 
Therefore the sum we want to bound can be vacuously rewritten as
$$
\sum_{\substack{n \in P}} \prod_{i = 1}^{k} U \Big ( \frac{n_i - x_i}{R H} \Big ) e \Big ( \alpha \sum_{j = 1}^{k} c_j n_j^2 + \beta \cdot n \Big ). 
$$
Write $n = x + m$ so that the constraints defining $P$ take the form $w_i \cdot m - B_i \geq 0$
and the smooth function becomes $\prod_{i = 1}^{k} U \Big ( \frac{m_i}{R H} \Big )$.
Since $|w_i \cdot m| \leq R^2 H$, any such condition with $|B_i| > R^2 H$ is
either vacuous or unsatisfiable for $m \in [0, R H]^k$. 
If a single constraint is unsatisfiable we are done
since the sum is then equal to zero. Vacuous constraints can be removed at no cost.
Therefore we can assume without loss of generality that $|B_i| \leq R^2 H$. 
We notice that since 
$$
\mathbf{1}_{t \geq 0} = \frac{1}{2} \cdot ( 1 + \text{sgn}(2 t + 1) ){,}
$$
the indicator function of $P$ can be expressed as a product of 
$$
\frac{1}{2^{r}} \prod_{i = 1}^{r} \Big ( 1 + \text{sgn} (2 (w_i \cdot m - B_i) + 1) \Big ) \quad \text{with} \quad \ r := \# L + 2k.
$$
Therefore we can re-express our initial sum as
$$
\frac{1}{2^{r}} \sum_{m \in \mathbb{Z}^k} \prod_{i = 1}^{k} U \Big ( \frac{m_i}{R H} \Big ) \prod_{i = 1}^{r} \Big (1 + \text{sgn} ( 2 (w_i \cdot m - B_i) + 1) \Big ) e \Big ( \alpha \sum_{j = 1}^{k} c_j (x_j + m_j)^2 + \beta \cdot (x + m) \Big ).
$$
Expanding the product and the squares $(x_j + m_j)^2$ and then appealing to the Proposition yields the result. 
\end{proof}

\subsection{Preparation for Proposition~\ref{prop:multiwall}: general
lemmas}\label{sec:lemmas}

The next three lemmas seem to be known, but we include them here for completeness.

\begin{lemma}\label{lem:sgn}
Let $L\ge4$ be an even integer. Then for every integer $h$ with $|h|<L/2$,
\begin{equation}\label{eq:saw}
\operatorname{sgn}(h)
=\sum_{\substack{0<|\ell|<L/2\\ \ell\ \mathrm{odd}}}
\widehat c_L(\ell)\,e\Bigl(\frac{\ell h}L\Bigr)
\quad \text{with} \quad
\widehat c_L(\ell)=-\frac{2i}L\cot\Bigl(\frac{\pi\ell}L\Bigr).
\end{equation}
In particular, $|\widehat c_L(\ell)|\ \le\ \frac2{\pi|\ell|}$ for $0<|\ell|<L/2$.
\end{lemma}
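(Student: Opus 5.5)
The plan is to compute the discrete Fourier transform of $\operatorname{sgn}$ on the cyclic group $\Z/L\Z$ and read off the coefficients. Since $|h|<L/2$, the residues $h \bmod L$ for $-L/2<h<L/2$ are distinct, so $\operatorname{sgn}$ on this window extends to a function $f$ on $\Z/L\Z$. Set $f(0)=0$, $f(h)=1$ for $0<h<L/2$, $f(h)=-1$ for $-L/2<h<0$, and $f(L/2)=0$. With this choice $f$ is odd on $\Z/L\Z$. Fourier inversion on $\Z/L\Z$ then gives
\[
f(h)=\sum_{\ell \bmod L}\widehat c_L(\ell)\,e\Bigl(\frac{\ell h}{L}\Bigr),\qquad \widehat c_L(\ell)=\frac1L\sum_{h\bmod L}f(h)e\Bigl(-\frac{\ell h}{L}\Bigr).
\]

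Oddness reduces this to $\widehat c_L(\ell)=-\frac{2i}{L}\sum_{0<h<L/2}\sin(2\pi\ell h/L)$, a geometric-type sum that I will evaluate in closed form. It is cleaner to write it as $\frac1L\sum_{0<h<L/2}\bigl(e(-\ell h/L)-e(\ell h/L)\bigr)$ and sum each geometric series with ratio $e(\pm\ell/L)$, using $e(\pm\ell/2)=(-1)^\ell$. For even $\ell\ne0$ the two partial sums coincide, so the coefficient vanishes; the case $\ell=0$ is zero by oddness. For odd $\ell$ the computation should give $\widehat c_L(\ell)=-\frac{2i}{L}\cot(\pi\ell/L)$. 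The quickest route is the identity
\[
\sum_{h=1}^{M-1}\sin(2h x)=\frac{\cos x-\cos((2M-1)x)}{2\sin x}
\]
with $M=L/2$ and $x=\pi\ell/L$. For odd $\ell$ one has $\cos((L-1)\pi\ell/L)=-\cos(\pi\ell/L)$, which yields the cotangent.

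To finish, I represent the residues $\ell \bmod L$ by $-L/2<\ell\le L/2$. Since $L$ is even, $\ell=L/2$ has the same parity as $L/2$. If it is odd, its coefficient is $\cot(\pi/2)=0$, so it can be dropped. This gives exactly the index range $0<|\ell|<L/2$ with $\ell$ odd.

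For the bound on $|\widehat c_L(\ell)|$, put $x=\pi|\ell|/L\in(0,\pi/2)$. Then $\cot x\le 1/x$, because $\tan x\ge x$ on this interval. Hence $|\widehat c_L(\ell)|\le \frac{2}{L}\cdot\frac{L}{\pi|\ell|}=\frac{2}{\pi|\ell|}$.

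The only delicate step is the bookkeeping in the geometric sum: getting the parity cancellation and the sign convention right, and checking that the value $f(L/2)=0$ is consistent. This is where errors are likely. I would verify the formula numerically for $L=4$, where it reads $\operatorname{sgn}(1)=\widehat c_4(1)i+\widehat c_4(-1)(-i)=1$ with $\widehat c_4(\pm1)=\mp i/2$.
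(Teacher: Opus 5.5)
Your proposal is correct and follows the same route as the paper: take the discrete Fourier transform on $\mathbb{Z}/L\mathbb{Z}$ of the sign function, set to $0$ at $L/2$ so that it is odd, and apply Fourier inversion. You also carry out the ``routine computations'' the paper omits — the telescoping sine sum, the vanishing of the even-$\ell$ and $\ell=L/2$ coefficients, and the bound via $\tan x\ge x$ — and all of these check out.
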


\begin{proof}
Apply the finite field Fourier transform to the function $s_L: \mathbb Z/L\mathbb Z \to \mathbb{R}$ defined by $s_L(h)=\operatorname{sgn}(h)$ for $|h|<L/2$ and $s_L(L/2)=0$. Fourier inversion and routine computations imply the desired identity. The bound on the coefficients follows trivially.
\end{proof}

We will also use the following standard application of the Poisson summation formula. 

\begin{lemma}\label{lem:poisson}
Let $U:\mathbb R\to\mathbb C$ be a smooth function supported in $[-2,2]$. 
Given $\lambda,\xi\in\RR$ set
\[
W_U(\lambda;\xi):=\int_{\mathbb R}U(t)\,e\bigl(\lambda t^2+\xi t\bigr)\,{dt}
\]
and, for an integer $q \geq 1$ and $a,b \in \mathbb{Z}$, set
\[
G(a,b;q):=\sum_{x\bmod q}e\Bigl(\frac{ax^2+bx}q\Bigr).
\]
Then for any $N, q \geq 1$, $\delta, \theta \in \RR$ and $a, b \in \mathbb{Z}$,  
\begin{equation}\label{eq:poisson}
\sum_{n\in\mathbb Z}e\Bigl(\Bigl(\frac aq+\delta\Bigr)n^2
+\Bigl(\frac bq+\theta\Bigr)n\Bigr)\,U\Bigl(\frac nN\Bigr)
=\frac Nq\sum_{\ell\in\mathbb Z}G(a,b+\ell;q)\,
W_U\Bigl(\delta N^2;\,N\Bigl(\theta-\frac{\ell}q\Bigr)\Bigr).
\end{equation}
Moreover, for $\Lambda\ge1$, $|\lambda|\le\Lambda$ and $\xi\in\RR$,
\[
|W_U(\lambda;\xi)|\ll_{\Lambda} (1 + \|U'\|_{1} + \|U''\|_{1}) \cdot \frac{1}{1+|\xi|^2}.
\]
\end{lemma}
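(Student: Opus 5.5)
The identity \eqref{eq:poisson} comes from splitting $n$ into residue classes modulo $q$ and applying Poisson summation to each class. The decay bound for $W_U$ comes from integrating by parts twice when $|\xi|$ is large compared with $\Lambda$, and from the trivial bound otherwise.

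\emph{Step 1 (the identity).} I write $n = x + qm$ with $x$ running over $\Z/q\Z$ and $m\in\Z$. Since $a,b\in\Z$, the rational phase $e((ax^2+bx)/q)$ depends only on $x \bmod q$. The sum then becomes
\[
\sum_{x \bmod q} e\Bigl(\frac{ax^2+bx}{q}\Bigr)\sum_{m\in\Z} F(x+qm),
\]
where $F(t) = e(\delta t^2 + \theta t)\,U(t/N)$ is smooth and compactly supported. Poisson summation in $m$ gives
\[
\sum_m F(x+qm) = \frac1q \sum_{\ell\in\Z} \widehat F(\ell/q)\, e(\ell x/q),
\qquad \widehat F(\xi)=\int F(t)e(-\xi t)\,dt .
\]
Interchanging the finite sum over $x$ with the absolutely convergent sum over $\ell$ produces $G(a,b+\ell;q)$. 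The substitution $t=Ns$ gives
\[
\widehat F(\ell/q) = N\, W_U\bigl(\delta N^2;\, N(\theta - \ell/q)\bigr).
\]
This is exactly \eqref{eq:poisson}. Absolute convergence of the $\ell$-sum follows from the decay bound in Step 2, since $G$ is trivially bounded by $q$.

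\emph{Step 2 (decay of $W_U$).} The trivial bound is $|W_U|\le \|U\|_1 \le 4\|U\|_\infty \le 2\|U'\|_1$, using that $U$ vanishes at $\pm2$. This handles the range $|\xi| \le 100\Lambda$. For $|\xi| > 100\Lambda$, the phase $\phi(t)=\lambda t^2+\xi t$ satisfies
\[
|\phi'(t)| = |2\lambda t + \xi| \ge |\xi| - 4\Lambda \ge |\xi|/2 \quad\text{on } [-2,2].
\]
I write $e(\phi) = \frac{1}{2\pi i\phi'}\,(e(\phi))'$ and integrate by parts twice; boundary terms vanish by the compact support. The resulting integrand is bounded by $|\xi|^{-2}$ times a combination of $|U''|$, $|U'|\,|\phi''|/|\phi'|$ and $|U|\,(|\phi''|^2/|\phi'|^2)$, and $\phi''=2\lambda$ is bounded by $2\Lambda$. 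Hence the contribution is
\[
\ll_\Lambda |\xi|^{-2}\bigl(\|U\|_1+\|U'\|_1+\|U''\|_1\bigr).
\]
The first norm is again controlled by $\|U'\|_1$. Combining the two ranges gives the factor $(1+|\xi|^2)^{-1}$.

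The only mild obstacle is the bookkeeping in the second integration by parts, namely tracking the derivative of $U/\phi'$. It is routine because $\phi'$ is affine and stays away from zero in this range.
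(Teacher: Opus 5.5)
Your proposal is correct and follows the same route as the paper: splitting $n$ into residue classes modulo $q$ and applying Poisson summation gives the identity, and integrating by parts twice gives the decay of $W_U$. Your split into the ranges $|\xi|\le 100\Lambda$ (the trivial bound, using $\|U\|_1\le 2\|U'\|_1$) and $|\xi|>100\Lambda$ (where $|\phi'|\ge|\xi|/2$ on $[-2,2]$) makes explicit what the paper's one-line ``integrate by parts twice'' leaves implicit.
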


\begin{proof}
Splitting $n$ into residue classes modulo $q$ and applying Poisson
summation to each class (see e.g.\ \cite[\S4.3]{IK}) gives
\eqref{eq:poisson}. The decay bound follows by integrating by parts
twice in $t$.
\end{proof}

The next lemma records the quadratic dependence of a Gauss sum on its
linear coefficient.
 
\begin{lemma}\label{lem:gaussphase}
Let $c, q \geq 1$ and $(a,q) = 1$. Denote by $\overline{a}$ the inverse
of $a$ modulo $q$. Then, for all $b \in \mathbb{Z}$, 
\[
G(ca,b;q)=\sqrt q \cdot \phi_{a,q}(b;c) \cdot e\Big(-\frac{\overline a\,b^2}{4cq}\Big),
\]
where $x \mapsto \phi_{a,q}(x; c)$ is periodic modulo $2c$ and
$|\phi_{a,q}(x; c)|\leq \sqrt{2c}$ for all $x \in \mathbb{Z}$.
\end{lemma}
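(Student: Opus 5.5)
The plan is to define
\[
\phi_{a,q}(b;c):=q^{-1/2}\,G(ca,b;q)\,e\Big(\frac{\overline a\,b^2}{4cq}\Big)
\]
for every $b\in\Z$, with $\overline a$ a fixed integer representative satisfying $a\overline a\equiv1 \mymod{q}$. The identity in the lemma then holds by definition. What remains is to show that $b\mapsto\phi_{a,q}(b;c)$ is $2c$-periodic and bounded by $\sqrt{2c}$. These are two independent elementary computations with the Gauss sum: a change of variables gives periodicity, and Weyl squaring gives the bound.

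\emph{Periodicity.} Write $a\overline a=1+mq$ with $m\in\Z$. Then $2ca\overline a\,x\equiv 2cx \mymod{q}$ for every integer $x$, so
\[
G(ca,b+2c;q)=\sum_{x\bmod q}e\Big(\frac{cax^2+(b+2ca\overline a)x}{q}\Big).
\]
Completing the square with the shift $x\mapsto x+\overline a$ gives
\[
cax^2+(b+2ca\overline a)x=ca(x+\overline a)^2+b(x+\overline a)-ca\overline a^{\,2}-b\overline a .
\]
The map $x\mapsto x+\overline a$ permutes the residues mod $q$, and the sum is well defined mod $q$ because the summand is $q$-periodic in $x$. Hence
\[
G(ca,b+2c;q)=e\Big(-\frac{ca\overline a^{\,2}+b\overline a}{q}\Big)\,G(ca,b;q).
\]
On the other hand, the quadratic phase changes by
\[
e\Big(\frac{\overline a\,((b+2c)^2-b^2)}{4cq}\Big)=e\Big(\frac{\overline a(b+c)}{q}\Big).
\]
Since $ca\overline a^{\,2}=c\overline a+cm\overline a\,q$, we have $e(ca\overline a^{\,2}/q)=e(c\overline a/q)$, so the two phases cancel exactly. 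This gives $\phi_{a,q}(b+2c;c)=\phi_{a,q}(b;c)$. The only point requiring care is that $\overline a$ is a fixed integer, not just a residue class, because $e(\overline a b^2/(4cq))$ depends on more than $\overline a \bmod q$. The computation above uses only integer identities, so this causes no difficulty.

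\emph{Bound.} I will show $|G(A,b;q)|^2\le q\gcd(2A,q)$ for $A=ca$. Expand $|G|^2=\sum_{x,y}e\big((A(x^2-y^2)+b(x-y))/q\big)$ and put $x=y+h$. The exponent becomes $(A h^2+bh)/q+2Ahy/q$. Summing over $y \bmod q$ gives $q$ when $q\mid 2Ah$ and $0$ otherwise. Therefore $|G|^2\le q\,\#\{h \bmod q: q\mid 2Ah\}=q\gcd(2A,q)$. Since $(a,q)=1$, we get $\gcd(2ca,q)=\gcd(2c,q)\le 2c$, so $|\phi_{a,q}(b;c)|\le\sqrt{2c}$. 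I do not expect a real obstacle here; the only delicate step is the bookkeeping of the phase $ca\overline a^{\,2}$ versus $c\overline a$ in the periodicity argument.
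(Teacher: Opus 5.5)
Your proposal is correct and follows the paper's proof essentially step for step. You use the same definition of $\phi_{a,q}$ and the same completing-the-square shift to get $G(ca,b+2c;q)=e\bigl(-(b+c)\overline a/q\bigr)G(ca,b;q)$; you shift by $+\overline a$ where the paper shifts by $-\overline a$. You also use the same Weyl-squaring bound $|G(ca,b;q)|^2\le q\gcd(2c,q)\le 2cq$. Your remark that $\overline a$ must be a fixed integer representative, and not just a residue class, is a useful clarification that the paper leaves implicit.
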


\begin{proof}
Substituting $x\mapsto x-\overline a$ and using $a\overline a\equiv1\mymod q$,
we have
\[
ca(x-\overline a)^2+(y+2c)(x-\overline a)\equiv cax^2+yx-(y+c)\overline a \quad
\mymod q,
\]
and therefore
\begin{align*}
G(ca, y + 2c; q) & = \sum_{x \bmod{q}} e \Big ( \frac{ca x^2 + (y + 2 c) x}{q} \Big ) \\
& = \sum_{x \bmod{q}} e \Big ( \frac{ca (x - \overline{a})^2 + (y + 2 c) (x - \overline{a})}{q} \Big ) \\
& = e \Big ( - \frac{(y + c) \overline{a}}{q} \Big ) \sum_{x \bmod{q}} e \Big ( \frac{ca x^2 + y x}{q} \Big )
= e \Big ( - \frac{(y + c) \overline{a}}{q} \Big ) G(ca, y; q).
\end{align*}
Let
\[
\phi_{a,q}(x;c) := \frac{1}{\sqrt{q}} \cdot G(ca, x; q) e \Big ( \frac{\overline{a} x^2}{4 c q} \Big ).
\]
Since $(x+2c)^2-x^2=4c(x+c)$, we find
\begin{align*}
\phi_{a,q}(x + 2c;c) & = \frac{1}{\sqrt{q}} \cdot G(ca, x + 2c; q) e \Big ( \frac{\overline{a} (x + 2c)^2}{4 c q} \Big ) \\
& = \frac{1}{\sqrt{q}} \cdot G(ca, x; q) e \Big ( -\frac{(x + c) \overline{a}}{q} \Big )
e \Big ( \frac{\overline{a} x^2}{4cq}+\frac{(x+c)\overline a}{q} \Big )
= \phi_{a,q}(x;c).
\end{align*}
For the bound, write $|G(ca,b;q)|^2=G(ca,b;q)\,\overline{G(ca,b;q)}$:
\[
|G(ca,b;q)|^2=\sum_{x\bmod q}\ \sum_{z\bmod q}
e\Big(\frac{ca(x^2-z^2)+b(x-z)}{q}\Big).
\]
For fixed $z$, the map $x\mapsto h:=x-z$ is a bijection of $\Z/q\Z$, and
$x^2-z^2=h^2+2hz$. Hence
\[
|G(ca,b;q)|^2=\sum_{h\bmod q}e\Big(\frac{cah^2+bh}{q}\Big)
\sum_{z\bmod q}e\Big(\frac{2cah\,z}{q}\Big).
\]
By orthogonality, the inner sum equals $q$ if $q\mid 2cah$ and $0$
otherwise. Bounding the outer terms by $1$ in absolute value gives
$|G(ca,b;q)|^2\le q\cdot\#\{h\bmod q:\ q\mid 2cah\}$.
Since $(a,q)=1$, the condition $q\mid 2cah$ is equivalent to
$q\mid 2ch$, that is, to $\frac{q}{g}\mid h$ with $g:=\gcd(2c,q)$. There
are exactly $g$ such residues $h$ modulo $q$, so
\[
|G(ca,b;q)|^2\le q\gcd(2c,q)\le 2cq,
\]
and therefore $|\phi_{a,q}(b;c)|=q^{-1/2}|G(ca,b;q)|\le\sqrt{2c}$.
\end{proof}

\subsection{Preparation for Proposition~\ref{prop:multiwall}: an
arithmetic lemma}\label{sec:arith}

Throughout this section $\alpha = [a_0; a_1, a_2, \ldots]$ is an irrational
number with $a_i \leq A$ for all $i \geq 1$, and
$a/q$ is a convergent of $\alpha$, and for $x\in\RR$ we write $\|x\|$ for
the distance from $x$ to the nearest integer.
The following bound follows from a single Weyl differencing step,
combined with the fact that, since the partial quotients of $\alpha$ are
bounded, the fraction $\overline a/q$ is badly approximable at all scales
below $q$. It loses a factor $(\log 2R)^{1/2}$ compared to square-root
cancellation. This loss is harmless for our purposes, since in the proof
of Proposition~\ref{prop:multiwall} the lemma is only applied to sums over
dyadic ranges $[2^J,2^{J+1})$ weighted by $2^{-J}$.

\begin{lemma}\label{lem:weylgauss}
Let $A , M \geq 1$.
Let $\alpha = [a_0; a_1, a_2, \ldots]$ be an irrational number with $a_i \leq A$ for all $i \geq 1$.
Let $a / q$ be a convergent of $\alpha$ and denote by $\overline{a} \in [0,q)$
the modular inverse of $a$ modulo $q$. Then, for all $1 \leq R \leq 2q$,
$$
\sup_{\beta \in \mathbb{R}} \sup_{\substack{1 \leq |m|,|n| \leq M \\ m, n, N \in \mathbb{Z} \\ v, r \geq 1}} \Big | \sum_{\substack{N \leq h < N + R \\ h \equiv v \mymod{r}}} e \Big ( \frac{m}{n} \cdot \frac{\overline{a}}{4 q} \cdot h^2 + \beta h \Big ) \Big | \ll_{M,A} \sqrt{R\log(2R)}.
$$
\end{lemma}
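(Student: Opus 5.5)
One Weyl differencing step (van der Corput) turns the quadratic sum into a sum of geometric series. Set $\gamma := \frac{m}{n}\cdot\frac{\overline a}{4q}$. Write $h = v_0 + r t$, where $v_0$ is the least element of the progression in $[N, N+R)$, so that $t$ ranges over an interval $T$ of length $R' \le R/r + 1$. If $r > R$ the sum has at most one term and there is nothing to prove, so assume $r \le R$. The phase is then a quadratic polynomial in $t$ with leading coefficient $\gamma r^2$, and $\beta$ and $v_0$ contribute only linear and constant terms. Squaring and differencing gives
\[
|S|^2 \le \sum_{|d|<R'} \Bigl|\sum_{t,\,t+d\in T} e(2\gamma r^2 d\, t)\Bigr| \le R' + \sum_{1\le |d| < R'} \min\bigl(R', \|2\gamma r^2 d\|^{-1}\bigr).
\]
The linear phase $\beta$ cancels in the differenced inner sum, which is where the supremum over $\beta$ becomes harmless.

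The frequencies are $2\gamma r^2 d = \frac{m r^2 d\, \overline a}{2nq}$. Multiply numerator and denominator to clear $n$, setting $x := m r^2 d$ and $Q := 2|n|q$, so the frequencies are $x\overline a/Q$. The key arithmetic input concerns $\overline a/q$. Since $a/q$ is a convergent of $\alpha$ with bounded partial quotients, $q/a$ has the reversed partial quotients $[a_k;\dots,a_1]$, all $\le A$. Moreover $\overline a \equiv \pm q_{k-1}$ modulo $q$ (from $a q_{k-1} - a_{k-1} q = \pm1$), and $q/q_{k-1}$ has continued fraction $[a_k;a_{k-1},\dots,a_1]$, again with partial quotients $\le A$. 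Consequently, for every integer $1\le y<q$ we have $\|y\overline a/q\| \gg_A 1/y$. More usefully, we get a spacing statement: any interval of $y$'s of length $Y \le q$ contains $O_A(1+Y\delta)$ values with $\|y\overline a/q\|\le\delta$. This follows from the three-gap structure of $\{y\overline a/q\}$ at the convergent denominator scales of $q/q_{k-1}$.

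To sum the min, write $x = m r^2 d$ with $|x| \le M r^2 R' \ll M r R$, and dyadically decompose by the size of $\|x\overline a/Q\|$. Reducing modulo $2|n|$, we have $\|x\overline a/(2nq)\|\ge \frac{1}{2|n|}\min_j\|(x\overline a + jq)/q\|$, and the residue of $x$ modulo $2|n|$ splits the problem into $O_M(1)$ sub-progressions. Within each, the spacing statement for $\overline a/q$ applies to $y = m r^2 d$ as $d$ runs over an interval. The values $y$ form a progression with step $|m|r^2$, so the fractions $y\overline a/q$ are themselves a scaled instance. The cleanest route is to use the spacing count with $y$ ranging over an interval of length $\ll M r R \cdot$, intersected with multiples of $mr^2$. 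This gives at most $O(1+ rR\delta\cdot M)$ values of $d$ per window, and one needs $\ll 1 + R'\delta$ instead. Resolving this mismatch is the main obstacle.

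I would try to remove the $r^2$ loss by applying the spacing count directly to the rational $r^2\overline a \bmod q$, whose badly-approximable property is not inherited. The alternative is to count solutions $d$ to $\|m r^2 d\,\overline a/Q\| \le\delta$ via the lattice $\{(d,u): |mr^2 d\overline a - uQ|\le \delta Q\}$. Its determinant is $Q$, and its shortest vector is controlled by the Diophantine property of $\overline a/q$ after factoring out $\gcd(r^2 m, q)$. If the $r$-dependence cannot be eliminated, I would settle for the bound as stated only when $r \ll_M 1$ and check whether the later application needs more. Assuming the count $\#\{1\le d<R': \|2\gamma r^2 d\|\le\delta\}\ll_{M,A} 1+R'\delta$ for $\delta\ge 1/q$, the dyadic sum is
\[
\sum_{\delta=2^{-j}\ge 1/R'} (1+R'\delta)\delta^{-1} + R'\cdot\#\{d: \|\cdot\|<1/R'\} \ll R'\log(2R'),
\]
using $R' \le 2q$ so that $\delta \ge 1/R'$ stays in the admissible range. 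Hence $|S|^2 \ll R\log(2R)$, which is the claim.
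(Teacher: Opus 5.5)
\textbf{There is a genuine gap: the bound is not uniform in $r$.} The lemma requires an implied constant depending only on $M$ and $A$, uniformly in the modulus $r$. Your parametrization $h=v_0+rt$ puts $r^2$ into the leading coefficient, $\gamma r^2=\frac{mr^2}{n}\cdot\frac{\overline a}{4q}$. As you note yourself, $r^2\overline a/q$ does not inherit the bad approximability of $\overline a/q$.

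Applying the available input $\|t\gamma\|\gg 1/|t|$ to $t\approx r^2(d-d')$ only shows that the relevant $d$ are $\gg 1/(r^2\delta)$-separated. That gives a count $1+r^2R'\delta\approx 1+rR\delta$, hence only $|S|^2\ll rR\log R$. Your proposal then simply assumes the needed count $\ll 1+R'\delta$, so the stated lemma is not proved.

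Your fallback for $r\ll_M 1$ is correct: absorb $r^2$ into the numerator $m$, replacing $M$ by $Mr^2$. It would even suffice for the later application, where the modulus is $2c_1\cdots c_k\le 2C^k$. But it is not the lemma as stated.

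\textbf{The missing idea: keep $r$ out of the quadratic coefficient.} Write $\mathbf 1_{h\equiv v \mymod{r}}=\frac1r\sum_{\ell=0}^{r-1}e(\ell h/r)\,e(-\ell v/r)$. Each resulting sum runs over the full interval $[N,N+R)$ with linear coefficient $\beta+\ell/r$. By the triangle inequality, the progression sum is then at most $\sup_{\beta'}\bigl|\sum_{N\le h<N+R}e(\gamma h^2+\beta'h)\bigr|$, uniformly in $r$. This is exactly how the paper reduces to $r=1$.

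\textbf{The case $r=1$.} Here your differencing-plus-spacing argument is essentially the paper's. The leading coefficient is $\gamma$ itself, and $\|t\gamma\|\ge c/|t|$ for $1\le|t|\le q/(2M)$. This follows by combining two facts:
\begin{itemize}
\item $\|y\overline a/q\|\ge 1/((A+2)y)$ for $1\le y<q$, which comes from $\overline a\equiv\pm q_{s-1}$ and the fact that $q/q_{s-1}=[a_s;\dots,a_1]$;
\item $\|x\|\le 4|n|\,\|x/(4n)\|$.
\end{itemize}
Incidentally, it is $q/q_{s-1}$, not $q/a$, that has the reversed partial quotients.

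This Diophantine input is only available for $|t|$ up to about $q/M$, whereas your $d$ ranges up to $R'\le 2q$. So first treat $q<16M$ trivially, then split $[N,N+R)$ into $O(M)$ subintervals of length at most $q/(4M)$. After that, your spacing count $\ll 1+R\delta$ and the dyadic summation give $|S|^2\ll_{M,A}R\log(2R)$.
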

 \begin{proof}
We can assume that $r = 1$, since if the result is known for $r = 1$ it
follows for general $r \geq 1$ using the identity
$$
\mathbf{1}_{h \equiv v \mymod{r}} = \frac{1}{r} \sum_{\ell = 0}^{r - 1} e \Big ( \frac{\ell h}{r} \Big ) e \Big ( - \frac{\ell v}{r} \Big ){,}
$$
which perturbs only the linear term of the exponential sum. Replacing $R$
by $\lceil R\rceil\le2R$ does not change the sum, so we may also assume
that $R$ is an integer with $R\le4q$. If $q<16M$, then $R<64M$ and the
claim follows from the trivial bound, so we assume that $q\ge16M$.
Splitting the range of summation into at most $32M+1$ intervals, each
consisting of at most $q/(4M)$ consecutive integers, and noting that
$\sqrt{R\log(2R)}$ is increasing in $R$, we may further assume that
$R\le q/(4M)$. Finally, the substitution $h\mapsto h+N$ only changes the
linear coefficient $\beta$ and introduces a unimodular constant factor, so
we may assume that $N=0$.

Write $p_i / q_i$ for the sequence of convergents of $\alpha$
so that by assumption $a / q = p_s / q_s$ for some $s \geq 0$. Since
$q\geq16M>1=q_0$, we have $s\geq1$.
Then, as is well known (see \cite[Chapter~X]{HW})
\begin{enumerate}
\item $p_i q_{i- 1} - p_{i - 1} q_i = (-1)^{i - 1}$
for all $i \geq 0$, with the convention $p_{-1} = 1$ and $q_{-1} = 0${.}
\item $q_{i - 1} / q_i = [0; a_i, a_{i - 1}, \ldots, a_1]$ for all $i\geq1$.
\end{enumerate}
In particular, $a q_{s - 1} - p_{s - 1} q = (-1)^{s - 1}$, and thus,
$$
\overline{a} \equiv \pm q_{s - 1} \quad \mymod{q},
$$
and
\begin{equation} \label{eq:seriescont}
\frac{q_{s - 1}}{q} = [0;a_{s}, a_{s - 1}, \ldots, a_{1}].
\end{equation}
Let $1 = w_0 \leq w_1 < w_2 < {\cdots} < w_s = q$ be the sequence
of denominators of the convergents of \eqref{eq:seriescont}.
Since $a_i \leq A$ we have $w_{j + 1} \leq (A + 1) w_{j}$ for $0\leq j<s$.
Moreover, for any integer $1 \leq t < q$ we can find a
$j$ with $w_{j} \leq t < w_{j + 1}$, and then, by the best approximation
property of convergents,
$$
\Big \| t \cdot \frac{\overline{a}}{q} \Big \|
= \Big \| t \cdot \frac{q_{s-1}}{q} \Big \|
\geq \Big \| w_j \cdot \frac{q_{s-1}}{q} \Big \|
\geq \frac{1}{w_j + w_{j + 1}} \geq \frac{1}{(A + 2) t}{.}
$$
Put
$$
\alpha':=\frac{m}{n}\cdot\frac{\overline a}{4q}
\qquad\text{and}\qquad
c:=\frac{1}{12AM^2}.
$$
If $t$ is an integer with $1\leq|t|\leq q/(2M)$, then $1\leq|tm|\leq q/2<q$.
Since moreover $\|x\|\leq4|n|\,\|x/(4n)\|$ for every $x\in\RR$, we obtain
\begin{equation} \label{eq:dioph}
\| t\alpha' \| \geq \frac{ \| t m \overline{a} / q \|}{4 |n|} \geq \frac{1}{4 (A + 2) |t m n|} \geq \frac{c}{|t|}
\qquad\Big(1\leq|t|\leq\frac{q}{2M}\Big).
\end{equation}
It now remains to {bound}
\begin{equation} \label{eq:quadraticsum}
S:=\sum_{0 \leq h < R} e \big ( \alpha' h^2 + \beta h \big ).
\end{equation}
Expanding $|S|^2$ and writing $h=h'+d$, so that $h^2-h'^2=2dh'+d^2$, we get
$$
|S|^2=\sum_{|d|<R}e\big(\alpha'd^2+\beta d\big)
\sum_{\substack{0\leq h'<R\\ 0\leq h'+d<R}}e\big(2\alpha'd\,h'\big).
$$
The inner sum runs over an interval of at most $R$ integers, so it is
bounded in absolute value by $\min\{R,(2\|2d\alpha'\|)^{-1}\}$. The terms
$d$ and $-d$ obey the same bound, hence
\begin{equation}\label{eq:weyldiff}
|S|^2\leq R+\sum_{1\leq d<R}\|2d\alpha'\|^{-1},
\end{equation}
where all terms are finite by \eqref{eq:dioph}, as $2d<2R\leq q/(2M)$.
Put $x_d:=2d\alpha'\in\RR/\Z$ for $1\leq d<R$. If $1\leq d<d'<R$, then
$1\leq2(d'-d)<2R\leq q/(2M)$, so \eqref{eq:dioph} gives
$$
\|x_{d'}-x_d\|=\|2(d'-d)\alpha'\|\geq\frac{c}{2(d'-d)}>\frac{c}{2R}
\qquad\text{and similarly}\qquad
\|x_d\|\geq\frac{c}{2d}>\frac{c}{2R}.
$$
Thus the points $x_d$ are $c/(2R)$-separated in $\RR/\Z$, and none of them
lies within distance $c/(2R)$ of $0$. Consequently, for every integer
$j\geq1$, the points $x_d$ with $\|x_d\|<jc/(2R)$ lie in the union of two
half-open arcs of length $(j-1)c/(2R)$, each of which contains at most $j-1$
of them. Ordering the values $\|x_d\|$ increasingly as
$\delta_1\leq\delta_2\leq\cdots\leq\delta_{R-1}$, it follows that
$\delta_t\geq\lceil t/2\rceil\,c/(2R)$, and therefore
$$
\sum_{1\leq d<R}\|2d\alpha'\|^{-1}
=\sum_{t=1}^{R-1}\delta_t^{-1}
\leq\frac{2R}{c}\sum_{t=1}^{R}\frac{1}{\lceil t/2\rceil}
\leq\frac{4R}{c}\,(1+\log R).
$$
Inserting this into \eqref{eq:weyldiff} gives
$|S|^2\leq R+48AM^2R(1+\log R)\ll_{A,M}R\log(2R)$, and the lemma follows.
\end{proof}

\subsection{Proof of Proposition~\ref{prop:multiwall}}\label{sec:proofmulti}

\begin{proof}
We aim to bound
\[
T:=\sum_{h\in\Z^k}\ \prod_{i=1}^r\operatorname{sgn}\bigl(2(w_i\cdot h-B_i)+1\bigr)
\prod_{j=1}^kU\Bigl(\frac{h_j}P\Bigr)\,
e\Bigl(\alpha\sum_{j=1}^k c_j h_j^2+\beta\cdot h\Bigr).
\]
By hypothesis, $|2(w_i\cdot h-B_i)+1|\le q/2+1<q$ on the support of
$\prod_jU(h_j/P)$. Hence Lemma~\ref{lem:sgn} with $L=2q$ gives the
Fourier expansion
\[
\operatorname{sgn}\bigl(2(w_i\cdot h-B_i)+1\bigr)
=\sum_{\substack{|v| < q}}\widehat c_{2q}(v)\,
e\Bigl(\frac{(1-2B_i)v}{2q}\Bigr)\,
e\Bigl(\frac{v\,w_i\cdot h}q\Bigr),
\]
with $\widehat{c}_{2q}(v) = 0$ for even $v$.
Plugging this in for each $1\le i\le r$ and expanding the product, the
sum over $h$ factors over the coordinates $h_1,{\ldots},h_k$, and we find that
\begin{equation} \label{eq:mainlong}
T=\sum_{v \in \mathbb{Z}^r \cap (-q,q)^{r}}\Big (\prod_{i=1}^r\widehat c_{2q}(v_i) e \Big ( \frac{(1 - 2B_i) v_i}{2q} \Big )\Big )
\prod_{j=1}^k S_j \Big ( \beta_j + \frac{1}{q} \sum_{i = 1}^{r} v_i w_{ij}  \Big ),
\end{equation}
where $w_{i} = (w_{i1}, \ldots, w_{ik})$ and
$$
S_j(\gamma):=\sum_{h\in\Z}U\Big (\frac hP\Big )
e\Bigl(\alpha c_jh^2+\gamma h\Bigr).
$$
Recall that by assumption $a/q$ denotes a convergent of $\alpha$.
Let 
$$
\delta := \alpha - \frac{a}{q} \text{ so that } |\delta| \leq \frac{1}{q^2}. 
$$
By Lemma \ref{lem:poisson}, 
\begin{equation} \label{eq:poissonapplied}
S_j(\gamma) = \frac{P}{q} \sum_{\ell \in \mathbb{Z}} G(c_j a, \ell; q) W_U \Big ( c_j \delta P^2 ; P \Big ( \gamma - \frac{\ell}{q} \Big ) \Big )
\end{equation}
with 
$$
W_U(\lambda;\xi) := \int_{\mathbb{R}} U(t)\, e\big(\lambda t^2 + \xi t\big)\, dt.
$$
In particular, 
\begin{align*}
S_j \Big (\beta_j + \frac{1}{q} \sum_{i = 1}^{r} v_i w_{i j} \Big ) & = \frac{P}{q} \sum_{\ell \in \mathbb{Z}} G(c_j a, \ell ; q) W_U \Big ( c_j \delta P^2; P \Big ( \beta_j + \frac{1}{q} \sum_{i = 1}^{r} v_i w_{i j} - \frac{\ell}{q} \Big ) \Big ) \\ & = \frac{P}{q} \sum_{\ell \in \mathbb{Z}} G(c_j a, \ell + g_j(v); q) W_U \Big ( c_j \delta P^2; P \Big ( \beta_j - \frac{\ell}{q} \Big ) \Big ),
\end{align*}
where $g_j(v) := \sum_{i = 1}^{r} v_i w_{i j}$.
Plugging the above equation into \eqref{eq:mainlong} and interchanging sums{, we obtain} 
\begin{equation} \label{eq:exactformula}
T=\Big(\frac {P}{\sqrt{q}}\Big)^k\sum_{\ell \in\Z^k}
\Big(\prod_{j=1}^kW_U\Big(c_j\delta P^2;\,P\Big(\beta_j-\frac{\ell_j}{q}\Big)\Big)\Big) \cdot \Sigma_{\ell},
\end{equation}
where
$$
\Sigma_{\ell}:=\sum_{v\in\Z^r\cap(-q,q)^r}
\Big(\prod_{i=1}^r\widehat c_{2q}(v_i) e \Big ( \frac{(1 - 2B_i) v_i}{2q} \Big ) \Big)
\prod_{j=1}^k \frac{G (c_ja,\ell_j + g_j(v);q)}{\sqrt{q}}.
$$
The trivial bound for $\Sigma_{\ell}$ is $\ll (\log q)^{r}$. We will show the
stronger {bound}
$$
\sup_{\ell \in \mathbb{Z}^k} |\Sigma_{\ell}| \ll 1. 
$$
Indeed with this bound at hand \eqref{eq:exactformula} is
\begin{align*}
& \ll \prod_{j = 1}^{k} \Big ( \frac{P}{\sqrt{q}} \sum_{\ell \in \mathbb{Z}} \Big | W_U \Big ( c_j \delta P^2 ; P \Big ( \beta_j - \frac{\ell}{q} \Big ) \Big ) \Big | \Big ) \\
& \ll \prod_{j = 1}^{k} \Big ( \frac{P}{\sqrt{q}} \sum_{\ell \in \mathbb{Z}} \Big ( 1 + P \Big | \beta_j - \frac{\ell}{q} \Big | \Big )^{-2} \Big ) \ll \Big ( \frac{P}{\sqrt{q}} \cdot \frac{q}{P} \Big )^{k} = q^{k / 2}.
\end{align*}
It remains to prove that $\Sigma_\ell\ll1$ uniformly in $\ell\in\Z^k$.
Let $\overline a\in[0,q)$ be the inverse of $a$ modulo $q$, and write
$\phi_{a,q}(\cdot\,;c_j)$ for the function given by
Lemma~\ref{lem:gaussphase}. Then
\begin{equation} \label{eq:big}
\Sigma_\ell=\sum_{v\in\Z^r\cap(-q,q)^r}
\Big(\prod_{i=1}^r\widehat c_{2q}(v_i)\,e\Big(\frac{(1-2B_i)v_i}{2q}\Big)\Big)
\prod_{j=1}^k\phi_{a,q}(\ell_j+g_j(v); c_j )
e\Big(-\frac{\overline a}{4q}\cdot\frac{(\ell_j+g_j(v))^2}{c_j}\Big).
\end{equation}
Let $Q = 2 {c_1 \cdots c_k}$. We fix each component of $v$ in a residue class
modulo $Q$. On each such congruence class the product of the $\phi_{a,q}(\ell_j + g_j(v); c_j)$ is
fixed. 
Therefore \eqref{eq:big} is
\begin{equation} \label{eq:crude}
\ll \sup_{\rho \in \{0,{\ldots},Q-1\}^r} \Big |
\sum_{\substack{v \in \mathbb{Z}^r \cap (-q, q)^r \\ v_i \equiv \rho_i \ (\mathrm{mod}\ Q)}}
\Big(\prod_{i = 1}^{r} \widehat{c}_{2q}(v_i)\, e \Big ( \frac{(1 - 2B_i) v_i}{2q} \Big )\Big)
\prod_{j=1}^k e \Big ( - \frac{\overline{a}}{4 q} \cdot \frac{(\ell_j + g_j(v))^2}{c_j} \Big ) \Big |.
\end{equation}
Expanding the squares,
$$
\sum_{j=1}^k\frac{(\ell_j+g_j(v))^2}{c_j}
=\sum_{i=1}^r\Big(\sum_{j=1}^k\frac{w_{ij}^2}{c_j}\Big)v_i^2
+\sum_{i\ne i'}\Big(\sum_{j=1}^k\frac{w_{ij}w_{i'j}}{c_j}\Big)v_iv_{i'}
+2\sum_{i=1}^r\Big(\sum_{j=1}^k\frac{\ell_jw_{ij}}{c_j}\Big)v_i
+\sum_{j=1}^k\frac{\ell_j^2}{c_j}.
$$
In particular, once all coordinates except $v_{i}$ are fixed, the
phase is
$$
-\frac{m_{i}}{{c_1 \cdots c_k}}\cdot\frac{\overline a}{4q}\,v_{i}^2+\gamma\,v_{i}+\theta
{\quad \text{with} \quad} m_{i}:=\sum_{j=1}^kw_{i j}^2\,\frac {{c_1 \cdots c_k}}{c_j},
$$
for some $\theta, \gamma\in\RR$ depending on the 
coordinates $v_j$ with $j \neq i$.
We now split the sum in \eqref{eq:crude} into 
$$
\sum_{\varepsilon_1, \ldots, \varepsilon_r \in \{-1,1\}} \sum_{2^{J_1}, \ldots, 2^{J_r} \leq q} S_{\varepsilon_1 J_1, \ldots, \varepsilon_r J_{r}}{,}
$$
where
$$
S_{\varepsilon_1 J_1, \ldots, \varepsilon_r J_r} := \sum_{\substack{v \in \mathbb{Z}^r \cap (-q, q)^r \\ v_i \equiv \rho_i \ (\mathrm{mod}\ Q) \\ 2^{J_i} \leq \varepsilon_i v_i < 2^{J_{i} + 1}}}
\Big(\prod_{i = 1}^{r} \widehat{c}_{2q}(v_i)\, e \Big ( \frac{(1 - 2B_i) v_i}{2q} \Big )\Big)
\prod_{j=1}^k e \Big ( - \frac{\overline{a}}{4 q} \cdot \frac{(\ell_j + g_j(v))^2}{c_j} \Big ).
$$
We now make the following observation: For each such sum, pick an index $1 \leq i \leq r$ and freeze all
variables $v_{j}$ with $j\ne i$. As a function of $v_i$, the phase is then
$$-\frac{m_i}{c_1\cdots c_k}\cdot\frac{\overline a}{4q}v_i^2+\gamma v_i+\kappa,$$
and $v_i$ runs over the progression $v_i\equiv\rho_i\mymod Q$ in the
range $2^{J_i}\le\varepsilon_iv_i<2^{J_i+1}$, which has length
$2^{J_i}<q$. Absorbing all terms linear in $v_i$ into $\beta$, we obtain
\begin{align*}
|S_{\varepsilon_1J_1, \ldots,\varepsilon_rJ_r}| 
\le \sum_{\substack{(v_j)_{j\ne i}\\ v_j \equiv \rho_j \ (\mathrm{mod}\ Q) \\ 2^{J_j} \leq \varepsilon_j v_j < 2^{J_{j} + 1}}}
\Big(\prod_{j \neq i} |\widehat{c}_{2q}(v_j)| \Big)
 \sup_{\beta \in \mathbb{R}} \Big | \sum_{\substack{v_i \equiv \rho_i \mymod{Q} \\ 2^{J_i} \leq \varepsilon_i v_i < 2^{J_i + 1}}}
\widehat{c}_{2q}(v_i)\, e \Big ( - \frac{m_i}{c_1 \cdots c_k} \cdot \frac{\overline{a}}{4 q}\, v_i^2 + \beta v_i \Big ) \Big |.
\end{align*}
Since $i \widehat c_{2q}$ is real and monotone on the range of $v_i$
and bounded by $2^{1-J_i}$ (Lemma~\ref{lem:sgn}), Abel summation followed by Lemma \ref{lem:weylgauss}
(applied with $m=-m_i$, $n=c_1\cdots c_k$, $M=V^2C^k$, modulus $Q$, and
intervals of length at most $2^{J_i}\leq q$) gives
\begin{align*}
\sup_{\beta\in\RR} & \Big|\sum_{\substack{v_i\equiv\rho_i\ (\mathrm{mod}\ Q)\\ 2^{J_i}\le\varepsilon_iv_i<2^{J_i+1}}}
\widehat c_{2q}(v_i)\,e\Big(-\frac{m_i}{c_1\cdots c_k}\cdot\frac{\overline a}{4q}\,v_i^2+\beta v_i\Big)\Big|
\\ & \ll 2^{-J_i}\sup_{\beta\in\RR}\ \sup_{I}\Big|\sum_{\substack{v_i\in I\\ v_i\equiv\rho_i\ (\mathrm{mod}\ Q)}}
e\Big(-\frac{m_i}{c_1\cdots c_k}\cdot\frac{\overline a}{4q}\,v_i^2+\beta v_i\Big)\Big|
\\ & \ll 2^{-J_i}\bigl(2^{J_i}\log 2^{J_i+1}\bigr)^{1/2}
\ll (1+J_i)^{1/2}\,2^{- J_i / 2}{,}
\end{align*}
where $I$ runs over the subintervals of
$\{v:\ 2^{J_i}\le\varepsilon_iv<2^{J_i+1}\}$.
Since for all $j \neq i$ {we have}
 $$\sum_{2^{J_j}\le\varepsilon_jv<2^{J_j+1}}|\widehat c_{2q}(v)|\ll1{,}$$
it follows that for every $1 \leq i \leq r$ we have $|S_{\varepsilon_1J_1,\ldots,\varepsilon_rJ_r}|\ll(1+J_i)^{1/2}2^{-J_i/2}$.
In particular, choosing $i$ with $J_i=\max_{1\leq i'\leq r}J_{i'}$,
$$
|S_{\varepsilon_1 J_1, \ldots, \varepsilon_r J_r}| \ll (1+u)^{1/2}\,2^{-u/2}
\qquad\text{with}\qquad u:=\max_{1 \leq i \leq r} J_i.
$$
There are at most $r(u+1)^{r-1}$ tuples $(J_1,\ldots,J_r)$ of non-negative
integers with $\max_iJ_i=u$. Summing over all the $J_1, \ldots, J_r$ and
over the $2^r$ choices of signs, we obtain
$$
\sup_{\ell \in \mathbb{Z}^k} |\Sigma_{\ell}| \ll \sum_{u \geq 0} \frac{(u+1)^{r - 1/2}}{2^{u/2}} \ll 1.
$$
This finishes the proof.
\end{proof}

\section{The decomposition theorem and proof of Theorem \ref{thm:box}}\label{sec:decomp}

This section proves a decomposition theorem which, combined with Theorem~\ref{thm:general}
from the previous section, yields the proof of Theorem~\ref{thm:box} at the end of this section.
Recall that a box is a product of finite intervals of integers, that $\cP(W)$ is the set of
$n\in\z^k$ with $w\cdot n+b\ge0$ for all $(w,b)\in W$, and that
\[
  \vol(J)=\prod_j\#J_j\qquad\text{and}\qquad
  \len(w;J)=\max\bigl(1,\max_{j\in\supp w}\#J_j\bigr).
\]
For a finite $W\subset\z^k\times\z$ we write
\[
 \rho(W)=\max\Bigl\{\#W,\ \max_{(w,b)\in W}|w|_1\Bigr\},
\]
and associate to $W$ the undirected graph $\Gamma_k(W)$ with vertices $\{1, \ldots, k\}$, in which $i\neq j$ are joined
by an edge whenever they occur together in some inequality of $W$, i.e.\ $w_iw_j\neq0$ for some $(w,b)\in W$.

\begin{definition}[Componentwise balanced boxes]\label{def:balanced-W}
Let $k, R\ge1$ and let $W\subset(\z^k\setminus\{0\})\times\z$ be finite. A box $J=\prod_{j=1}^kJ_j$
is \emph{$R$-balanced with respect to $W$} if
\[
 \#J_i\le R\#J_j
 \quad\text{whenever }i,j\text{ belong to the same component of }
 \Gamma_k(W).
\]
\end{definition}

\begin{theorem}[Decomposition with balanced enclosing boxes]\label{thm:decomp}
Let $k,V,h\ge1$ and $r\ge0$ be integers, and let $c_1,\ldots,c_k$ be positive integers with
$c_j\le h$. Let $W\subset\z^k\times\z$ with $\#W\le r$ be such that $0<|w|_1\le V$ for all $(w,b)\in W$.
Suppose that $P=\cP(W)$ is finite and nonempty, and let $I$ be the smallest box containing $P$.
Then, there exists an integer $R=R(k,r,V,h)$ and a disjoint decomposition
\begin{equation}\label{eq:decomposition}
 P=\bigsqcup_{m=1}^M\bigl(u_m+\Phi_m\cP(W_m)\bigr)
\end{equation}
with the property that $M\le R$, $u_m\in\z^k$, $\Phi_m\in\z^{k\times k}$ is non-singular, each $W_m \subset \z^k \setminus\{0\} \times \z$ is finite, the boxes $J_m$ are $R$-balanced with respect to
$W_m$ and
\begin{equation}\label{eq:boxed-conclusion}
 \cP(W_m)\subseteq J_m \quad \text{and} \quad  \vol(J_m)\le R\vol(I).
\end{equation}
Furthermore each
$\Phi_m^{\mathsf T}\operatorname{diag}(c_j)\Phi_m$ is diagonal, and
\[
\max\Bigl(|\Phi_m|_\infty ,\, |\Phi_m^{\mathsf T}\operatorname{diag}(c_j)\Phi_m|_\infty ,\, \rho(W_m)\Bigr) \leq R
\qquad(1\le m\le M).
\]
If moreover both $(w_0,b_0)$ and $(-w_0,-b_0)$ belong to $W$, then the decomposition
\eqref{eq:decomposition} can be chosen so that, in addition,
\begin{equation}\label{eq:decomposition-section}
 \vol(J_m)\le R\,\frac{\vol(I)}{\len(w_0;I)}\qquad(1\le m\le M).
\end{equation}
\end{theorem}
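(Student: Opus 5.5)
I would prove Theorem~\ref{thm:decomp} by induction on $k$ and, within fixed $k$, on a complexity measure of $W$: the number of constraints together with the structure of $\Gamma_k(W)$. The basic move is a \emph{scale separation}. Order the side lengths $\#I_1,\ldots,\#I_k$ of the minimal enclosing box. If $I$ is already $R_0$-balanced with respect to $W$ for a suitable $R_0=R_0(k,r,V,h)$, we stop with $M=1$, $\Phi=\mathrm{Id}$, $W_1=W$ and $J_1=I$. Otherwise, by pigeonhole over at most $k$ consecutive ratios, there is a threshold $T$ with a large gap. The side lengths $\ge T$ then exceed those $<T$ by a factor $\ge R_1$, where $R_1$ can be taken as large as we like in terms of $k,r,V,h$, at the cost of a larger final $R$. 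Call the long coordinates $\mathcal L$ and the short ones $\mathcal S$.

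The key step is to examine every constraint $(w,b)\in W$ that mixes $\mathcal L$ and $\mathcal S$. On $P\subset I$, the short part $w_{\mathcal S}\cdot n_{\mathcal S}$ varies in a window of size $\ll V\max_{s\in\mathcal S}\#I_s$, which is much smaller than the scale of the long variables. I would split $P$ according to the value of $w_{\mathcal L}\cdot n_{\mathcal L}+b$. When it lies outside a window of width $O(V\max_{\mathcal S}\#I_s)$, the constraint is either automatically satisfied or violated, so it can be replaced by a constraint involving only long variables. When it lies inside the window, we have a thin slab in the long directions. Each such split adds $O_{k,V}(1)$ new constraints with bounded $|v|_1$.

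In the thin slab, I would change variables by a unimodular-type map $\Phi$ so that $w_{\mathcal L}\cdot n_{\mathcal L}$ becomes a new short coordinate. The constraint that $\Phi^{\mathsf T}\operatorname{diag}(c_j)\Phi$ stay diagonal is the delicate point. I would pass to the sublattice orthogonal to $w_{\mathcal L}$ with respect to the form $\operatorname{diag}(c_j)$. Concretely, I would choose an integer basis of $\{x: \sum c_j w_j x_j=0\}\cap\z^{\mathcal L}$ together with the vector $(w_j/c_j)\cdot\prod c$, and then orthogonalize it in the Gram–Schmidt sense over $\Q$ with denominators cleared. This yields a nonsingular integer $\Phi$ of bounded entries whose image has bounded index. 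The image is a finite union of cosets $u+\Phi\z^k$, with the number of cosets bounded by $|\det\Phi|\ll_{k,V,h}1$, and this produces the translates $u_m$. Pulling back $W$ gives $W_m$ with bounded $\rho$. In the new coordinates the slab direction is short, which reduces the number of mixed constraints or the size of $\mathcal L$. Iterating at most $O_{k,r}(1)$ times ends with every component of $\Gamma_k(W_m)$ living inside a single scale class, which is exactly componentwise balance. The volume bound $\vol(J_m)\le R\vol(I)$ follows because each change of variables has bounded determinant and the slabs are contained in boxes comparable to $I$.

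For the sectional refinement \eqref{eq:decomposition-section}: the equality constraint $w_0\cdot n=-b_0$ is an exact slab of width $0$. In the coordinate change above, with $w=w_0$, the new coordinate is constant, so its box side is $1$. The longest side among $\supp w_0$, namely $\len(w_0;I)$, is replaced by $1$ up to bounded factors, which gives the factor $\len(w_0;I)^{-1}$.

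I expect the main obstacle to be keeping the process terminating while maintaining the diagonality of $\Phi_m^{\mathsf T}\operatorname{diag}(c_j)\Phi_m$. A change of variables can create new mixing between scales, so I would organize the induction on the pair (number of distinct scale classes, number of mixed constraints) in lexicographic order, and verify that each step strictly decreases it.
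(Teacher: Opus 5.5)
Your outline has the same architecture as the paper: split along a mixed inequality, make the thin linear form a coordinate by a rational change of basis that is orthogonal for $\operatorname{diag}(c_j)$, clear denominators, split into cosets, and recurse lexicographically. But the step that actually delivers \eqref{eq:boxed-conclusion} and \eqref{eq:decomposition-section} is missing, and the reason you give for it does not hold up. A bounded determinant controls the number of lattice points. It does not control the smallest box containing the image in the new coordinates, and that box is what $J_m$ must be. Orthogonality also does not pin down how the thin form is completed. For example, take $c_j=1$, $w_0=(1,1,1,1)$ and $P=\{x\in[0,M]^2\times[0,1]^2:\ x_1+x_2+x_3+x_4=M\}$, so that $\vol(I)\asymp M^2$ and $\sect(w_0;I)\asymp M$. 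The forms $x_1-x_4$, $x_2-x_3$ and $x_1-x_2-x_3+x_4$ are orthogonal to each other and to $w_0$. Together with $w_0\cdot x$ they diagonalize $\sum_j x_j^2$, and $4T^{-1}$ is integral, so you get a legitimate bounded-complexity transition. Yet each of the three forms varies by $\asymp M$ on $P$, so the new enclosing box has volume $\asymp M^3$, which violates both bounds. The missing idea is the ordered Gram--Schmidt of Lemma~\ref{lem:short-coordinate}. Sort $\#J_1\ge\cdots\ge\#J_d$ on $\supp a$, and orthogonalize the coordinate functionals $a,e_d,e_{d-1},\ldots,e_2$ in this order for the dual form $\sum_i\xi_i\eta_i/c_i$. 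Then $r_j$ is supported on the first $j$ coordinates, and $r_j\cdot x=x_j-\frac{a_j}{c_j\tau_j}\,a\cdot x+\frac{a_j}{c_j\tau_j}\sum_{\ell>j}a_\ell x_\ell$ with $\tau_j=\sum_{i\le j}a_i^2/c_i$. Since $a\cdot x$ is thin, this varies by $O(\#J_j)$. Hence the thin coordinate replaces exactly the longest side in $\supp a$, every other side grows by at most a factor $3$, and $\vol(J')\le 3^{d-1}(H/\#J_q)\vol(J)$. With $a=w_0$ and $H=1$ this is the sectional bound.

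A second, smaller issue is termination. The pair (number of scale classes, number of mixed constraints) need not decrease. Suppose the classes are $\{x_1\}$ and $\{x_2,x_3\}$ and you make $x_2$ thin. You end with $\{x_1,x_2\}$ and $\{x_3\}$, still two classes, while a constraint such as $x_2+x_3\le c$ has become mixed. Likewise, if the threshold is re-chosen by pigeonhole at each stage, $\#\mathcal L$ is not monotone unless the threshold is pinned to something the transitions preserve. The paper calls $j$ short when $\#J_j\le AH$, where $H$ is the minimal side. No transition changes $H$: the short sides are untouched and the thin side is set to exactly $H$. The statement is proved for all $A$ at once, which absorbs the growth of constants. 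The index $(d,n_{\mathrm{long}},n_{\mathrm{mix}})$ then strictly drops in each branch, with the case $n_{\mathrm{mix}}=0$ handled by splitting into a product of lower-dimensional pieces. No gap between scales is needed: the window of $w_{B^c}\cdot x$ on the slab is $<A^2H$, and rescaling by $s>(A+1)^2$ compresses it to side $H$. Finally, when you replace the mixed inequality by one in the long variables, you must also adjoin the box inequalities $x\in J$, since the implication relies on them.
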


We now briefly outline the main idea in the proof. 
Let $J\supseteq\cP(W)$ be a box with shortest side $H$, and call a coordinate $j$ of $J$
\emph{short} if $\#J_j\ll H$ and \emph{long} otherwise. If all coordinates of $J$ are short, they
are all comparable in size to $H$ and we are done. Otherwise, there are two possibilities: either no
inequality in $W$ mixes short and long coordinates, or there exists an inequality that mixes short
and long coordinates of $J$. In the first case, after reordering the coordinates we can 
write $\cP(W) = \cP(W_1) \times \cP(W_2)$ with both factors of lower dimension and we recurse. 
In the second case, some inequality $w \cdot x+b\geq 0$ with $(w,b) \in W$ involves both long and short
coordinates. Let $B$ be the set of short coordinates of $J$
and write $w=w_B+w_{B^c}$, with $w_B$ supported on $B$ and $w_{B^c}$ on its complement. Then
we split $\cP(W)$ into two parts and recurse on both: on the first, the inequality $w \cdot x + b \geq 0$ can be replaced 
by one involving only $w_{B^c}\cdot x${, while} on the second, $w_{B^c}\cdot x$ varies by only $O(H)$
(Figure~\ref{fig:split}), and Lemma~\ref{lem:short-coordinate}, after an affine change of variables,
turns $w_{B^c} \cdot x$ into a coordinate of side $H$. In either case we lower 
the triple 
$$(\text{dimension}, \text{number of long coordinates}, \text{number of mixed inequalities})$$
lexicographically, and every change of variables keeps $\sum_jc_jx_j^2$ diagonal.
The recursion resembles Lenstra's algorithm for integer programming in fixed
dimension~\cite{Len}, which slices a flat set along its thin direction{. Here} the
thin direction is instead made a coordinate, by changes of variables that keep
$\sum_jc_jx_j^2$ diagonal.

\subsection{Two tools}\label{sec:tools}

\begin{definition}[Pieces]\label{def:piece}
A \emph{piece} of dimension $d\ge1$ is a quadruple $(W,J,Q,A)$, where $W\subset\Z^d\times\Z$ is finite,
$J=\prod_{j=1}^dJ_j$ is a box, $Q(x)=\sum_{j=1}^d\gamma_jx_j^2$ with integers $\gamma_j\ge1$, and
$A\ge1$ is an integer, such that
\[
 \cP(W)\subseteq J\qquad\text{and}\qquad\max\{d,\rho(W),|(\gamma_j)|_\infty\}\le A.
\]
Let $H=\min_{1\le j\le d}\#J_j$. A coordinate $j$ is said to be \emph{short} if $\#J_j\le AH$ and \emph{long}
otherwise. We define $B:=\{j : \#J_j \leq AH\}$ and $B^c=\{1,\ldots,d\}\setminus B$ as the sets of short and long coordinates. An
inequality $(w,b)\in W$ is said to be \emph{mixed} if $\supp w$ has nonempty intersection with both $B$ and $B^c$.
The \emph{index} of the piece is $(d,n_{\mathrm{long}},n_{\mathrm{mix}})$, where $n_{\mathrm{long}}=\#B^c$ and $n_{\mathrm{mix}}$ is the number of mixed inequalities in $W$. Indices are ordered lexicographically.
\end{definition}

Clearly $n_{\mathrm{mix}}\le\#W\le A$ and, since $B\ne\varnothing$, we have $n_{\mathrm{long}}<d$.

\begin{definition}[Transitions, size and decompositions]\label{def:transition}
Let $(W,J,Q,A)$ and $(W',J',Q',A')$ be pieces of dimension $d$. A \emph{transition} from
$(W,J,Q,A)$ to $(W',J',Q',A')$ is a substitution $x=u+\Phi y$, with $u\in\Z^d$ and
$\Phi\in\Z^{d\times d}$ non-singular, such that
\[
 u+\Phi\,\cP(W')\subseteq\cP(W)\qquad\text{and}\qquad Q'(y)=Q(\Phi y).
\]
We define the size of a transition by $|\Phi|_\infty$. A \emph{decomposition} of $(W,J,Q,A)$ is a finite family of
transitions $x=u_m+\Phi_my$ from $(W,J,Q,A)$ to pieces $(W_m,J_m,Q_m,A_m)$ such that
\[
\cP(W)=\bigsqcup_m\bigl(u_m+\Phi_m\cP(W_m)\bigr).
\]
\end{definition}

Since $Q(u+\Phi y)=Q'(y)+2(\Phi^{\mathsf T}Gu)\cdot y+Q(u)$, where
$G=\operatorname{diag}(\gamma_1,{\ldots},\gamma_d)$, after the substitution $x=u+\Phi y$ the form $Q$ is
diagonal in $y$ up to terms of degree at most $1$.

\begin{lemma}[Composing and multiplying transitions]\label{lem:compose}
The following holds true:
\begin{enumerate}
\item If $x=u+\Phi y$ is a transition from $(W,J,Q,A)$ to $(W',J',Q',A')$ and $y=v+\Psi z$ is one
from $(W',J',Q',A')$ to $(W'',J'',Q'',A'')$, then $x=(u+\Phi v)+\Phi\Psi z$ is a transition from
$(W,J,Q,A)$ to $(W'',J'',Q'',A'')$ of size at most $d\,|\Phi|_\infty|\Psi|_\infty$. Consequently, a
decomposition of a piece, followed by decompositions of each of the resulting pieces, gives a
valid decomposition of the original piece.
\item If $(W_i,J_i,Q_i,A_i)$ is a piece of dimension $d_i$ for $i=1,2$, then
$(W_1\oplus W_2,\,J_1\times J_2,\,Q_1\oplus Q_2,\,A_1+A_2)$ is a piece of dimension $d_1+d_2$, where
$W_1\oplus W_2$ is the union of $W_1$ and $W_2$ with the vectors $w$ padded with zeros and
$(Q_1\oplus Q_2)(y_1,y_2)=Q_1(y_1)+Q_2(y_2)$. Transitions from $P_i := (W_i,J_i,Q_i,A_i)$ to
$P'_i := (W'_i,J'_i,Q'_i,A'_i)$, $i=1,2$, combine coordinatewise into a transition from the product of 
$P_1$ and $P_2$ to the product of $P_1'$ and $P_2'$. Consequently, decompositions of two lower-dimensional 
pieces combine into a decomposition of their product.
\end{enumerate}
\end{lemma}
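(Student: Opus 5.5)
The plan is to verify both parts directly from Definitions~\ref{def:piece} and~\ref{def:transition}. Only two things need checking: the defining properties of a transition ($u+\Phi\cP(W')\subseteq\cP(W)$ and $Q'(y)=Q(\Phi y)$), and the size and piece conditions.

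\emph{Part (1).} Take $z\in\cP(W'')$. Since $y=v+\Psi z$ is a transition, $y\in\cP(W')$. Since $x=u+\Phi y$ is a transition, $u+\Phi v+\Phi\Psi z=u+\Phi y\in\cP(W)$. This gives the inclusion $(u+\Phi v)+\Phi\Psi\,\cP(W'')\subseteq\cP(W)$. For the quadratic forms we compute
\[
Q''(z)=Q'(\Psi z)=Q(\Phi\Psi z).
\]
Here $\Phi\Psi$ is a non-singular integer matrix and $u+\Phi v\in\Z^d$. Each entry of $\Phi\Psi$ is a sum of $d$ products, each bounded by $|\Phi|_\infty|\Psi|_\infty$, so $|\Phi\Psi|_\infty\le d\,|\Phi|_\infty|\Psi|_\infty$.

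For the consequence, suppose $\cP(W)=\bigsqcup_m(u_m+\Phi_m\cP(W_m))$ and $\cP(W_m)=\bigsqcup_n(v_{m,n}+\Psi_{m,n}\cP(W_{m,n}))$. Applying the injective affine map $y\mapsto u_m+\Phi_m y$ to the second decomposition preserves the disjointness of the pieces within each $m$. Disjointness across different $m$ is inherited from the first decomposition. Composing the transitions as above therefore gives a decomposition of the original piece.

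\emph{Part (2).} First check that the product is a piece. We have
\[
\cP(W_1\oplus W_2)=\cP(W_1)\times\cP(W_2)\subseteq J_1\times J_2,
\]
because a padded constraint only sees its own block of coordinates. Next, $\#(W_1\oplus W_2)\le\#W_1+\#W_2$, and padding does not change $|w|_1$. The dimension is $d_1+d_2\le A_1+A_2$, and the coefficients of $Q_1\oplus Q_2$ are positive integers bounded by $\max(A_1,A_2)$. Together these give $\max\{d,\rho,|\gamma|_\infty\}\le A_1+A_2$.

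For transitions, set $u=(u_1,u_2)$ and $\Phi=\operatorname{diag}(\Phi_1,\Phi_2)$, a block-diagonal non-singular integer matrix. Then
\[
u+\Phi(\cP(W_1')\times\cP(W_2'))=(u_1+\Phi_1\cP(W_1'))\times(u_2+\Phi_2\cP(W_2'))\subseteq\cP(W_1\oplus W_2),
\]
and $(Q_1\oplus Q_2)(\Phi y)=Q_1(\Phi_1y_1)+Q_2(\Phi_2y_2)=Q_1'(y_1)+Q_2'(y_2)$. The size of the combined transition is $\max(|\Phi_1|_\infty,|\Phi_2|_\infty)$.

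For decompositions, the product of two disjoint unions is the disjoint union of the pairwise products. So the decompositions $\cP(W_i)=\bigsqcup_m(\cdots)$ for $i=1,2$ combine, indexed by pairs $(m_1,m_2)$, into a decomposition of the product.

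I expect no real obstacle here; the only care needed is the bookkeeping of the constant $A$, which only has to be an upper bound and so can be taken to be $A_1+A_2$.
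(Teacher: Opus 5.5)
Your proof is correct and takes the same approach as the paper, whose entire proof is the remark that both parts follow directly from the definitions. You have written out that verification in full: the inclusion and quadratic-form identities, the entrywise size bound, injectivity of the affine maps for disjointness, and the identity $\cP(W_1\oplus W_2)=\cP(W_1)\times\cP(W_2)$ for the product.
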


\begin{proof}
Both parts follow directly from the definitions.
\end{proof}

\begin{lemma}[Splitting along a mixed inequality]\label{lem:split}
Let $(W,J,Q,A)$ be a piece of index $(d,n_{\mathrm{long}},n_{\mathrm{mix}})$ with $B$ the set of 
short coordinates, and let $(w,b)\in W$ be mixed. Write $w=w_B+w_{B^c}$, 
where $w_B$ is obtained from $w$ by setting its entries outside $B$ to $0$,
and $w_{B^c}=w-w_B$, and let $\lambda_{w,b}=\min_{x\in J}(w_B\cdot x+b)$. Let $W^+$ be
obtained from $W$ by replacing $(w,b)$ with $(w_{B^c},\lambda_{w,b})$ and adjoining the $2d$ inequalities
$x_j\ge\min J_j$ and $x_j\le\max J_j$, and let $W^-=W\cup\{(-w_{B^c},-\lambda_{w,b}-1)\}$. Then
$(W^+,J,Q,A+2d)$ and $(W^-,J,Q,A+1)$ are pieces {with the following properties:}
\begin{enumerate}
\item The identity map gives transitions from $(W,J,Q,A)$ to $(W^+,J,Q,A+2d)$ and from $(W,J,Q,A)$ to $(W^-,J,Q,A+1)$, forming a
decomposition of $(W,J,Q,A)$;
\item The piece $(W^+,J,Q,A+2d)$ has index strictly smaller than $(d,n_{\mathrm{long}},n_{\mathrm{mix}})$;
\item We have $|w_{B^c}\cdot(x-x')|<A^2H$ for $x,x'\in\cP(W^-)$.
\end{enumerate}
\end{lemma}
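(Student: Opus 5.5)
The proof is a direct verification from the definitions; the only real content is in the index drop (2) and the width bound (3). First, the piece conditions. Since $W^+$ and $W^-$ only add or modify constraints, we have $\cP(W^\pm)\subseteq J$. For $W^+$ this holds by the adjoined box inequalities. For $W^-$ it holds because $W\subseteq W^-$, so $\cP(W^-)\subseteq\cP(W)\subseteq J$. The bound on $\rho$ is also immediate. Indeed, $\#W^+\le\#W+2d\le A+2d$ and $\#W^-\le A+1$, while every vector appearing is either a vector of $W$, a truncation $\pm w_{B^c}$ with $|w_{B^c}|_1\le|w|_1\le A$, or a unit vector. Moreover $d\le A$ and $Q$ is unchanged. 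One should also check that $w_{B^c}\neq 0$, which holds because $w$ is mixed.

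For (1), fix $x\in\cP(W)\subseteq J$. By definition of $\lambda_{w,b}$ we have $w_B\cdot x+b\ge\lambda_{w,b}$. Since the quantities are integers, exactly one of $w_{B^c}\cdot x+\lambda_{w,b}\ge0$ or $w_{B^c}\cdot x+\lambda_{w,b}\le -1$ holds. In the second case $x\in\cP(W^-)$. In the first case
\[
w\cdot x+b\ \ge\ w_{B^c}\cdot x+\lambda_{w,b}\ \ge\ 0 .
\]
Conversely, any $y\in\cP(W^+)$ lies in $J$ by the box inequalities. Using the replaced constraint it then satisfies $w\cdot y+b\ge w_{B^c}\cdot y+\lambda_{w,b}\ge0$, so $y\in\cP(W)$. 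Hence $\cP(W^+)=\cP(W)\cap\{w_{B^c}\cdot x+\lambda_{w,b}\ge0\}$ and $\cP(W^-)$ is the complement within $\cP(W)$. This gives a disjoint union, realized by identity transitions with $\Phi=I$ and $Q'=Q$.

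For (2), $J$ is unchanged, so $H$, $B$ and $d$ are unchanged. The new constant $A+2d\ge A$ can only enlarge $B$, so $n_{\rm long}$ does not increase. If it strictly decreases we are done. Otherwise $B$ is the same set. The new constraint $w_{B^c}$ has support in $B^c$ and is not mixed, while the adjoined unit vectors are not mixed either, so $n_{\rm mix}$ drops by one. There is a subtlety when enlarging $B$: constraints that were previously unmixed could become mixed. This is irrelevant, however, because a strict drop in $n_{\rm long}$ already gives a lexicographic decrease.

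For (3), take $x\in\cP(W^-)$. The inequality $(w,b)\in W$ gives $w_{B^c}\cdot x\ge -b-w_B\cdot x\ge -b-\max_{J}w_B\cdot x$. The new constraint gives $w_{B^c}\cdot x\le-\lambda_{w,b}-1=-b-\min_J w_B\cdot x-1$. Therefore $w_{B^c}\cdot x$ ranges over an interval of length less than $\max_J w_B\cdot x-\min_J w_B\cdot x$. This is at most $\sum_{j\in B}|w_j|(\#J_j-1)<|w|_1\,AH\le A^2H$, since $\#J_j\le AH$ for short $j$. The difference $w_{B^c}\cdot(x-x')$ is therefore less than $A^2H$ in absolute value. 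I expect no genuine obstacle here; the main care needed is in the bookkeeping of (2), specifically the change of $B$ when $A$ is replaced by $A+2d$.
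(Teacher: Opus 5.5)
Your proof is correct and follows the paper's argument essentially step for step. It uses the same piece-condition bookkeeping, the same integrality dichotomy for $w_{B^c}\cdot x+\lambda_{w,b}$ giving $\cP(W)=\cP(W^+)\sqcup\cP(W^-)$, the same lexicographic argument (either $n_{\mathrm{long}}$ drops when $A$ becomes $A+2d$, or $B$ is unchanged and $n_{\mathrm{mix}}$ drops), and the same sandwich $-\max_J(w_B\cdot x+b)\le w_{B^c}\cdot x\le-\min_J(w_B\cdot x+b)-1$ for (3). The only blemish is your sentence claiming $B$ is unchanged because $J$ is; $B$ depends on $A$ as well, and you immediately and correctly qualify this.
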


\begin{figure}[ht]
\centering
\begin{tikzpicture}[x=1cm,y=1cm,font=\footnotesize]
 \fill[black!10] (3.4,0) rectangle (10,1.5);
 \fill[black!30] (3.4,0) -- (3.4,1.5) -- (1.4,1.5) -- cycle;
 \draw (0,0) rectangle (10,1.5);
 \draw[very thick] (3.6,-0.15) -- (1.1,1.725);
 \draw[dashed] (3.4,0) -- (3.4,1.75);
 \node[anchor=south east] at (1.1,1.725) {$w\cdot x+b=0$};
 \node[anchor=south] at (3.4,1.75) {$w_{B^c}\cdot x+\lambda_{w,b}=0$};
 \node at (2.75,1.0) {$\cP(W^-)$};
 \node at (6.7,0.75) {$\cP(W^+)$};
 \node[anchor=north east] at (10,1.5) {$J$};
 \draw[|<->|,>=stealth] (-0.25,0) -- (-0.25,1.5) node[midway,left] {$H$};
 \draw[|<->|,>=stealth] (1.4,-0.35) -- (3.4,-0.35) node[midway,below] {$<A^2H$};
\end{tikzpicture}
\caption{Lemma~\ref{lem:split} for $d=2$, with $x_1$ long and $x_2$ short.}\label{fig:split}
\end{figure}

\begin{proof}
Since $W^{+}$ is obtained from $W$ by adjoining $2d$ conditions equivalent to $x \in J$ and replacing
$(w,b)$ by $(w_{B^c}, \lambda_{w,b})$, with $|w_{B^c}|_1 \leq |w|_1 \leq A$, we have
$\cP(W^+) \subseteq J$ and $\rho(W^{+}) \leq A + 2d$. Clearly $\cP(W^{-}) \subseteq \cP(W) \subseteq J$,
and $W^{-}$ is obtained from $W$ by adjoining only one condition $(-w_{B^c}, - \lambda_{w,b} - 1)$,
so $\# W^{-} \leq \# W + 1$ and $\rho(W^{-}) \leq \rho(W) + 1 \leq A + 1$. As $d$ and $Q$ are
unchanged, $(W^{+}, J, Q, A + 2d)$ and $(W^{-}, J, Q, A + 1)$ are pieces.

(1) As the transitions are identities, we must show that $\cP(W)=\cP(W^+)\sqcup\cP(W^-)$. We have
seen that $\cP(W^{-}) \subseteq \cP(W)$. If $w_{B^c} \cdot x + \lambda_{w,b} \geq 0$
and $x \in J$ then $w \cdot x + b \geq 0$. Therefore, as $\cP(W^+) \subseteq J$, we have
$\cP(W^+) \subseteq \cP(W)$.
Since the two inequalities $w_{B^c} \cdot x + \lambda_{w,b} \geq 0$ and $(-w_{B^c}) \cdot x + (-\lambda_{w,b} - 1) \geq 0$
cannot both hold for the same $x \in \mathbb{Z}^d$ we see that $\cP(W^{-})$ and $\cP(W^{+})$ are disjoint. It remains to 
show that if $x \in \cP(W)$ then either $x \in \cP(W^{-})$ or $x \in \cP(W^{+})$. 
Since $x \in \cP(W)$ and $\cP(W) \subset J$ we have the $2d$ conditions $\min J_i \leq x_i$ and $x_i \leq \max J_i$. 
If $w_{B^c} \cdot x + \lambda_{w,b} \geq 0$ holds then $x \in \cP(W^+)$ while if $w_{B^c} \cdot x + \lambda_{w,b} \geq 0$ fails
then $w_{B^c} \cdot x + \lambda_{w,b} \leq -1$ and $x \in \cP(W^{-})$. 

(2) Let $H := \min_i \# J_i$. We note that all sides with $\#J_i \leq (A + 2d)H$ are considered short. Therefore $n_{\text{long}}$ can decrease in $(W^+, J, Q, A + 2d)$ compared to $(W, J, Q, A)$ and if it does we are done since that decreases $(d, n_{\text{long}}, n_{\text{mix}})$ lexicographically. Otherwise if $n_{\text{long}}$ stays the same the set $B$ of short coordinates in both pieces is the same and in that event $n_{\text{mix}}$ has decreased in $(W^+, J, Q, A + 2d)$ compared to $(W, J, Q, A)$ since we replaced $(w,b)$ by $(w_{B^c}, \lambda_{w,b})$ and none of the $2d$ inequalities $\min J_i \leq x_i$ or $x_i \leq \max J_i$ affects $n_{\text{mix}}$. 

(3) For $x \in \cP(W^{-})$ we have both 
$$
w \cdot x + b \geq 0 \quad \text{ and } \quad w_{B^c} \cdot x + \lambda_{w,b} \leq - 1{.} 
$$
In particular, 
$$
- (w_{B} \cdot x + b) \leq w_{B^c} \cdot x  
$$
for all $x \in \cP(W^{-})$, and also 
$$
w_{B^c} \cdot x \leq - 1 - \lambda_{w,b}.
$$
Therefore, for all $x \in \cP(W^{-})$,
$$
- \max_{x \in J} (w_B \cdot x + b) \leq w_{B^c} \cdot x \leq - \min_{x \in J} (w_B \cdot x + b) - 1{.}
$$
It follows that, for any $x, x' \in \cP(W^{-})$, 
$$
|w_{B^c} \cdot (x - x')| \leq \max_{x \in J} (w_B \cdot x + b) - \min_{x \in J} (w_B \cdot x + b) - 1 < A^2 H 
$$
since $|w_B|_1 \leq A$ and $w_{B}$ is non-zero only on coordinates of $x$ that lie in an interval of length $\leq A H$.
\end{proof}

\begin{lemma}[A short linear combination as a coordinate]\label{lem:short-coordinate}
Let $(W,J,Q,R)$ be a piece of dimension $d$, let $H$ be an integer with $1\le H\le\min_j\#J_j$, and let
$a\in\Z^d\setminus\{0\}$ satisfy $|a|_1\le R$ and $|a\cdot(x-x')|\le RH$ for $x,x'\in\cP(W)$.
Choose $q\in\supp a$ with largest $\#J_q$. Then there exist a set $U\subset\Z^d$ with $\# U\ll_R 1$ and a matrix
$D\in\Z^{d\times d}$ such that the transitions $x=u+Dy$, $u\in U$, give a
decomposition of $(W,J,Q,R)$ into pieces $(W'_u,J'_u,Q',R')$ with $R'\ll_R1$ and
\[
 \#J'_{u,q}=H,\qquad \#J_j\le\#J'_{u,j}\le3\#J_j\ (j\ne q) \quad \text{and} \quad J'_{u,j}=J_j\ (j\notin\supp a).
\]
\end{lemma}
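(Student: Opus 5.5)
We build an integer change of variables $x=u+Dy$ in which the linear form $a\cdot x$ becomes (essentially) the coordinate $y_q$, and which preserves the diagonality of $Q$. The natural choice is to keep every coordinate $j\ne q$ as a free variable, but rescaled so that replacing $x_q$ by $a\cdot x$ is unimodular up to bounded factors. Concretely, let $g=\operatorname{lcm}$ of the $\gamma_j a_j$ over $\supp a$ (bounded in terms of $R$). Set $x_j=u_j+\mu_j y_j$ for $j\in\supp a\setminus\{q\}$, with multipliers $\mu_j\ll_R 1$ chosen so that $\gamma_q a_q\mu_j a_j$ is comparable to $\gamma_j\mu_j^2$. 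The orthogonality we need is that the new basis vectors are $G$-orthogonal, where $G=\operatorname{diag}(\gamma)$.

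The direction along which $a\cdot x$ moves should be the $G^{-1}$-dual of $a$, namely the vector $e=(a_j/\gamma_j)_j$ scaled to be integral: $e=g\,(a_j/\gamma_j)_j\in\Z^d$. For $j\ne q$, take the columns $f_j$ of $D$ spanning the $G$-orthogonal complement of $e$ inside $\operatorname{span}\{e_i: i\in\supp a\}$. A Gram--Schmidt process over $\Z$ with respect to $G$, using denominators cleared by bounded integers, gives vectors with entries $\ll_R1$, each supported on $\supp a$, and such that $e$ together with all the $f_j$ (plus $e_j$ for $j\notin\supp a$) is $G$-orthogonal. Then $D^{\mathsf T}GD$ is diagonal with bounded entries, so $Q'(y)=Q(Dy)$ is a valid diagonal form. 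Since $D$ is non-singular with $|\det D|\ll_R 1$, the image $D\Z^d$ has bounded index. Letting $U$ run over coset representatives of $\Z^d/D\Z^d$ (so $\#U\ll_R1$) therefore partitions $\Z^d$, and hence $\cP(W)$, into the sets $u+D\cP(W'_u)$ with $W'_u=\{(D^{\mathsf T}w, w\cdot u+b)\}$. This is where $\rho(W'_u)\ll_R1$ comes from.

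It remains to choose the boxes $J'_u$. In the new coordinates $a\cdot x = a\cdot u + (a\cdot e)y_q$ with $a\cdot e\neq0$, because $a\cdot f_j=0$ by $G$-orthogonality (indeed $a\cdot f = e^{\mathsf T}Gf/g$). Hence $y_q$ ranges over a set of diameter $\le RH/|a\cdot e|\le RH$, and we add the two constraints bounding $y_q$ and take $J'_{u,q}$ to be an interval of exactly $H$ points. This requires splitting into $\ll_R1$ translates if $RH>H$, which is absorbed into $U$, or else padding.

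For $j\ne q$ we must bound $y_j$. This is where the choice of $q$ with the largest $\#J_q$ enters, and it is the main obstacle. We want each $y_j$ to be recoverable from $x$ as a bounded rational combination of $x_j$ and $x_q$ only, so that its range is $\ll\#J_j+\#J_q$. That is not enough: we need $\#J'_{u,j}\le3\#J_j$. I would therefore arrange $D$ in triangular form, in which $y_j$ for $j\in\supp a\setminus\{q\}$ is essentially $x_j$ up to a bounded multiple of $y_q$, whose range is only $O(H)\le O(\#J_j)$. This explains the factor $3$: the range of $x_j$ plus twice a range of size $\le \#J_j$. Achieving exact diagonality with such a triangular shape forces the scalings, and I would then split residues into cosets so that the side lengths remain between $\#J_j$ and $3\#J_j$. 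Coordinates outside $\supp a$ are untouched, giving $J'_{u,j}=J_j$.
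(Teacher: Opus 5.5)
Your overall framework matches the paper's. The paper's $D=sT^{-1}$ has columns proportional to $G^{-1}r_j$, so its $q$-th column is a multiple of your $e$. The coset decomposition with $W'_u=\{(D^{\mathsf T}w,\,b+w\cdot u)\}$ is also the same as yours.

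\textbf{The gap.} It lies in the step you yourself flag as the main obstacle, the bound $\#J'_{u,j}\le 3\#J_j$ for $j\ne q$. The triangular form you propose, in which each $y_j$ is $x_j$ corrected only by a multiple of $y_q$ (hence of $a\cdot x$), cannot exist once $|\supp a|\ge3$. Diagonality of $Q'$ is equivalent to the rows of $D^{-1}$ being orthogonal for $\langle\xi,\eta\rangle=\sum_i\xi_i\eta_i/\gamma_i$. Your form means rows $\propto a$ and $\propto e_j-\lambda_j a$. Orthogonality to $a$ forces $\lambda_j=a_j/(\gamma_j\tau)$ with $\tau=\sum_i a_i^2/\gamma_i$. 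But then, for distinct $j,k\in\supp a\setminus\{q\}$,
\[
\langle e_j-\lambda_j a,\,e_k-\lambda_k a\rangle=-\frac{a_ja_k}{\gamma_j\gamma_k\tau}\neq0 .
\]
So $y_j$ must also depend on other coordinates. A Gram--Schmidt run in an arbitrary order can make it depend on some $x_\ell$ with $\#J_\ell\gg\#J_j$, which destroys the bound.

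\textbf{The missing idea.} Order all of $\supp a$ by side length, $\#J_1\ge\cdots\ge\#J_d$, so that $q=1$. Then orthogonalize $a,e_d,e_{d-1},\ldots,e_2$ in exactly this order; in your column language, $G$-orthogonalize $e,e_d,\ldots,e_2$. This gives rows $r_j=e_j-\frac{a_j}{\gamma_j\tau_j}(a_1,\ldots,a_j,0,\ldots,0)$ with $\tau_j=\sum_{i\le j}a_i^2/\gamma_i$, and
\[
r_j\cdot x=x_j-\frac{a_j}{\gamma_j\tau_j}\,a\cdot x+\frac{a_j}{\gamma_j\tau_j}\sum_{\ell>j}a_\ell x_\ell,\qquad \Bigl|\frac{a_j}{\gamma_j\tau_j}\Bigr|\le1 .
\]
Consequences:
\begin{itemize}
\item The long coordinate $x_1$ enters only through $a\cdot x$.
\item The remaining corrections involve only coordinates with $\#J_\ell\le\#J_j$.
\item Hence $r_j\cdot x$ varies by at most $(2R+1)\#J_j$ on $\cP(W)$.
\end{itemize}
This ordering is where the choice of $q$ as the longest side in $\supp a$ is really used.

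\textbf{The factor $3$.} It comes from dividing by an integer $s>R$ chosen so that $D=sT^{-1}$ is integral, i.e.\ $y=(Tx-Tu)/s$. This single scaling makes $y_q$ vary by at most $RH/s<H$, so no splitting of the $y_q$-range is needed. It also makes $y_j$ vary by less than $3\#J_j$. Padding the boxes then gives $\#J'_{u,q}=H$ and $\#J'_{u,j}\ge\#J_j$. Your heuristic (range of $x_j$ plus twice a range of size $\le\#J_j$) does not work without this division. Before scaling, the correction terms vary by up to $RH$ and $R(\#J_j-1)$, so no bound of the form $3\#J_j$ holds.
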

\begin{proof}
Since the construction below only involves the coordinates in $\supp a$, we may assume that
$\supp a=\{1,\ldots,d\}$ (on the other coordinates, $D$ is the identity and $J'_{u,j}=J_j$). In
particular $d\le|a|_1\le R$. Reordering, we may also assume that $\#J_1\ge\cdots\ge\#J_d$, and so $q=1$.

Let us first briefly outline the argument. We want new coordinates in which $a\cdot x$ is a
coordinate whose values on $\cP(W)$ lie in an interval of $H$ integers, $Q$ is still diagonal, and the other coordinates have ranges comparable to the old sides. Over $\mathbb Q$ such
coordinates come from Gram--Schmidt: we first construct $T\in\mathrm{GL}_d(\mathbb Q)$ such that, with
$z=Tx$, $Q$ is diagonal in $z$, $z_1=a\cdot x$ varies on $\cP(W)$ by at most $RH$, and $z_j$ varies by at
most $(2R+1)\#J_j$ for $j\ge2$. The substitution $x=T^{-1}z$ then has the required properties, except
that it is not integral and $z_1$ varies by up to $RH$ rather than by less than $H$. We fix both by passing to
$D=sT^{-1}$ for a suitable integer $s>R$: the substitutions $x=u+Dy$ divide all ranges by $s$, and the
cosets $u+D\z^d$ partition $\z^d$.

\emph{A rational change of variables.} For $\xi,\eta\in\mathbb Q^d$ put
$\langle\xi,\eta\rangle=\sum_i\xi_i\eta_i/\gamma_i$. If $r_1,{\ldots},r_d\in\mathbb Q^d\setminus\{0\}$ are
orthogonal for $\langle\cdot,\cdot\rangle$, then, with $G=\operatorname{diag}(\gamma_i)$,
\begin{equation}\label{eq:parseval}
 Q(x)=\langle Gx,Gx\rangle=\sum_{j=1}^d\frac{\langle Gx,r_j\rangle^2}{\langle r_j,r_j\rangle}
 =\sum_{j=1}^d\frac{(r_j\cdot x)^2}{\langle r_j,r_j\rangle}.
\end{equation}
Applying Gram--Schmidt to $a,e_d,e_{d-1},{\ldots},e_2$, we find $r_1=a$ and
\[
 r_j=e_j-\frac{a_j}{\gamma_j\tau_j}\,(a_1,{\ldots},a_j,0,{\ldots},0) \quad \text{with} \quad
 \tau_j=\sum_{i\le j}\frac{a_i^2}{\gamma_i}\qquad(2\le j\le d).
\]
Indeed, $\langle a,r_j\rangle=0$, and for $2\le j<k$, since $r_j$ is supported on the first $j$
coordinates,
\[
 \langle r_j,r_k\rangle=\langle r_j,e_k\rangle-\frac{a_k}{\gamma_k\tau_k}\langle r_j,a\rangle=0.
\]
Let $T\in\mathrm{GL}_d(\mathbb Q)$ have rows $r_1,{\ldots},r_d$, and put $z=Tx$, so that $z_j=r_j\cdot x$.
Then $z_1=a\cdot x$, and $Q$ is diagonal in $z$ by \eqref{eq:parseval}. On $\cP(W)$, $z_1$ varies by at most $RH$. For $j\ge2$,
\begin{equation}\label{eq:zj}
 z_j=x_j-\frac{a_j}{\gamma_j\tau_j}\,a\cdot x+\frac{a_j}{\gamma_j\tau_j}\sum_{\ell>j}a_\ell x_\ell,
 \qquad\text{where }\ \frac{|a_j|}{\gamma_j\tau_j}\le1\ \text{ as }\ \tau_j\ge\frac{a_j^2}{\gamma_j}.
\end{equation}
Since $\#J_\ell\le\#J_j$ for $\ell>j$, the three terms on the right-hand side of \eqref{eq:zj} vary on
$\cP(W)$ by at most $\#J_j-1$, $RH$ and $R(\#J_j-1)$, respectively. As $H\le\#J_j$, it follows that
$z_j$ varies by at most $(2R+1)\#J_j$.

\emph{Integral version.} As $T^{-1}$ does not preserve $\z^d$, we rescale it and split into cosets.
Let $s>R$ be the least integer such that $D=sT^{-1}$ is integral, and let $U=D[0,1)^d\cap\z^d$. The
cosets $u+D\z^d$, $u\in U$, partition $\z^d$ (see  \cite[Proposition~1.3.10 and Corollary~1.3.27]{HNP}). For
$u\in U$ let $W'_u=\{(D^{\mathsf T}w,\,b+w\cdot u):(w,b)\in W\}$, so that
$\cP(W'_u)=\{y\in\z^d:u+Dy\in\cP(W)\}$. Hence the sets $u+D\cP(W'_u)$, with $u\in U$, partition $\cP(W)$.

\emph{The new form.} Let $Q'(y)=Q(Dy)$ for $y\in\mathbb Q^d$. By \eqref{eq:parseval} we obtain
\[
 Q'(y)=\sum_{j=1}^d\frac{(r_j\cdot Dy)^2}{\langle r_j,r_j\rangle}
 =s^2\sum_{j=1}^d\frac{y_j^2}{\langle r_j,r_j\rangle},
\]
since $r_j\cdot Dy$ is the $j$-th entry of $TDy=sy$, as $T$ has rows $r_j$. So $Q'$ is diagonal. Taking $y=e_j$ in the display, we find that the \(j\)th coefficient of \(Q'\) is $s^2/\langle r_j,r_j\rangle=Q'(e_j)=Q(De_j)$; a positive integer since $De_j$ is a nonzero integer
vector.

\emph{The new boxes.} If $y\in\cP(W'_u)$, then $x=u+Dy\in\cP(W)$ and $Tx=Tu+sy$, so
$y_j=(r_j\cdot x-(Tu)_j)/s$. Hence on $\cP(W'_u)$ each $y_j$ varies by at most $1/s$ times the variation
of $z_j=r_j\cdot x$ on $\cP(W)$. The coordinate $y_1$ varies by at most $RH/s<H$, and $y_j$ by at most $(2R+1)\#J_j/s<3\#J_j$
for $j\ge2$. Hence $\cP(W'_u)$ lies in a box $J'_u$ with $\#J'_{u,1}=H$ and
$\#J_j\le\#J'_{u,j}\le3\#J_j$ for $j\ge2$.

\emph{Bounds.} Since $d\le|a|_1\le R$ and $\gamma_j\le R$, there are $\ll_R1$ choices of $a$ and
$(\gamma_j)$, and $D$ depends only on these, hence $|D|_\infty$, $\#U=|\det D|$, $\rho(W'_u)$ and the
coefficients of $Q'$ are $\ll_R1$. As $(W,J,Q,R)$ has dimension at most $R$ and
$\cP(W'_u)\subseteq J'_u$, each $(W'_u,J'_u,Q',R')$ is a piece for some $R'\ll_R1$, and the transitions
$x=u+Dy$ to these pieces form a decomposition of $(W,J,Q,R)$.
\end{proof}

\subsection{Decomposition of pieces}\label{sec:pieces}

\begin{proposition}\label{prop:pieces}
For all $d,n_{\mathrm{long}},n_{\mathrm{mix}},A$ there is $C=C(d,n_{\mathrm{long}},n_{\mathrm{mix}},A)$ such that every piece $(W,J,Q,A)$ of
index $(d,n_{\mathrm{long}},n_{\mathrm{mix}})$ has a decomposition by at most $C$ transitions
$x=u_m+\Phi_my$ of size at most $C$ to pieces $(W_m,J_m,Q_m,A_m)$ with $A_m\le C$,
$\vol(J_m)\le C\vol(J)$ and $J_m$ $C$-balanced with respect to $W_m$.
\end{proposition}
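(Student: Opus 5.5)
We argue by induction on the index $(d,n_{\mathrm{long}},n_{\mathrm{mix}})$ in the lexicographic order. For fixed $A$ only finitely many indices lie below a given one: $n_{\mathrm{long}}<d\le A$ and $n_{\mathrm{mix}}\le A$, and the parameter $A$ grows by at most $O_A(1)$ at each step. So it suffices to show the following. Every piece either is already $C$-balanced with respect to its constraint set (take the trivial decomposition), or it admits a decomposition into $O_A(1)$ pieces of strictly smaller index. These new pieces must have parameter $A'\ll_A1$, transitions of size $\ll_A1$, and boxes with $\vol(J')\ll_A\vol(J)$. Lemma~\ref{lem:compose}(1) then chains the decompositions, and the constants multiply finitely many times. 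The base case is $n_{\mathrm{long}}=0$: every side is at most $AH$, so $J$ is $A$-balanced, even with respect to the full coordinate set.

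Suppose $n_{\mathrm{long}}\ge1$. There are two cases.

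\emph{Case 1: $n_{\mathrm{mix}}=0$.} No inequality of $W$ joins $B$ and $B^c$. After permuting coordinates (a transition with a permutation matrix, which keeps $Q$ diagonal), $W=W_1\oplus W_2$ and $J=J_B\times J_{B^c}$, so the piece is the product of a piece on $B$ and a piece on $B^c$, each of dimension $<d$. Apply the induction hypothesis to each factor and combine using Lemma~\ref{lem:compose}(2). Volumes multiply, so $\vol(J_m)\le C_1C_2\vol(J)$. Balance with respect to $W_{1,m}\oplus W_{2,m}$ holds because the graph $\Gamma$ of a direct sum is the disjoint union of the two graphs, so each component lives in one factor.

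\emph{Case 2: $n_{\mathrm{mix}}\ge1$.} Pick a mixed $(w,b)$ and apply Lemma~\ref{lem:split}. The piece $(W^+,J,Q,A+2d)$ has smaller index, so induction handles it. For $(W^-,J,Q,A+1)$, Lemma~\ref{lem:split}(3) gives $|w_{B^c}\cdot(x-x')|<A^2H$ on $\cP(W^-)$, and $w_{B^c}\ne0$ is supported on long coordinates. Apply Lemma~\ref{lem:short-coordinate} with $a=w_{B^c}$, scale $H$, and $R=A^2+2A$. This produces $\ll_A1$ transitions $x=u+Dy$ to pieces $(W'_u,J'_u,Q',R')$ in which the long coordinate $q\in\supp a$ with largest side now has side exactly $H$. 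Every other side is multiplied by a factor in $[1,3]$, so the new minimal side is still $H$ (the minimum cannot drop, since $H\le\#J_j\le\#J'_{u,j}$). Hence $\vol(J'_u)\le 3^dH\vol(J)/\#J_q\le3^d\vol(J)$.

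To finish, re-declare short and long coordinates using the new parameter $R'$. Coordinate $q$ is now short. Any coordinate short before has side $\le3AH\le R'H$, so it stays short once $R'\ge3A$, which we may assume by enlarging $R'$. Thus $n_{\mathrm{long}}$ drops by at least one while $d$ is unchanged, the index decreases, and the induction hypothesis applies.

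\emph{Main obstacle.} The delicate point is exactly this bookkeeping. Enlarging $A$ to $R'$ can only turn long coordinates into short ones, which is harmless: it only decreases $n_{\mathrm{long}}$. The real concern is that the new minimal side could shrink below $H$, which would reclassify everything; the bound $\#J'_{u,j}\ge\#J_j\ge H$ together with $\#J'_{u,q}=H$ rules this out. We also need the constants to stay controlled through the recursion. Since each step has bounded depth, branching, and parameter growth, all bounded in terms of $A$ and the index, an induction statement that makes $C$ depend on the index and on $A$ closes the argument.
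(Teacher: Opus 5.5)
Your proposal is correct and follows the paper's proof essentially step for step. It uses lexicographic induction on the index, the product case via Lemma~\ref{lem:compose}(2) when $n_{\mathrm{long}}>0=n_{\mathrm{mix}}$, and otherwise Lemma~\ref{lem:split} followed by Lemma~\ref{lem:short-coordinate} on the $W^-$ branch, which makes coordinate $q$ short without shrinking the minimal side $H$. The only cosmetic difference is that you bound previously short sides by $3AH$ and enlarge $R'$ to cover this, whereas the paper observes that those sides are unchanged because $\supp w_{B^c}\subseteq B^c$.
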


\begin{proof}
We proceed by induction on $(d, n_{\text{long}}, n_{\text{mix}})$ ordered lexicographically. 
Thus given a piece $(W, J,  Q, A)$ of index $\rho = (d, n_{\text{long}}, n_{\text{mix}})$, we 
assume that the statement holds
for all $A \geq 1$ and all pieces $(W', J', Q', A')$ of index $< \rho$.  

We organize the proof according to the following three possibilities. 

\textbf{Case $n_{\text{long}} = 0$.} In this case all sides of $J$ lie between $H := \min_i \#J_i$ and
$A H$ and $\cP(W) \subset J$. Therefore $J$ is $A$-balanced with respect to $W$, and the identity transition
from $(W,J,Q,A)$ to itself is the required decomposition.

\textbf{Case $n_{\text{long}} > 0$ and $n_{\text{mix}} = 0$.} 
We can without loss of generality assume that 
$B = \{1, \ldots, t\}$ and $B^c = \{t + 1, \ldots, d\}$ for
some $1 \leq t < d$.
Since $n_{\text{mix}} = 0$, for every $(w,b) \in W$ either $\supp w \subset B$
or $\supp w \subset B^c$. Thus,
$\cP(W)=\cP(W_B)\times\cP(W_{B^c})$, where for $E \subseteq \{1, \ldots, d\}$, we set
$$W_E=\bigl\{((w_i)_{i\in E},\,b):(w,b)\in W,\ \supp w\subseteq E\bigr\}\subset\Z^E\times\Z.$$ 
{Likewise, let} 
$$
J_E := \prod_{i \in E} J_i {\quad \text{and} \quad} Q_E(x) := \sum_{j \in E} \gamma_j x_j^2.
$$
Then, $J=J_B\times J_{B^c}$ and
$Q=Q_B+Q_{B^c}$. Both $(W_E, J_E, Q_E, A)$ with $E \in \{B, B^c\}$ are pieces of dimension 
less than $d$, so the induction hypothesis gives decompositions of them into pieces
$(W_m,J_m,Q_m,A_m)$, $1\le m\le M$, and $(W'_n,J'_n,Q'_n,A'_n)$, $1\le n\le N$, respectively. Since
$\cP(W)=\cP(W_B)\times\cP(W_{B^c})$ and $Q=Q_B\oplus Q_{B^c}$, by Lemma~\ref{lem:compose}(2) these
combine into a decomposition of $(W,J,Q,A)$ into the pieces
$(W_m\oplus W'_n,J_m\times J'_n,Q_m\oplus Q'_n,A_m+A'_n)$.
Finally, let 
$$
R := \max_{\substack{d'<d \\ n'_{\text{long}}<d' \\ n'_{\text{mix}} \leq A}} C(d',n'_{\text{long}},n'_{\text{mix}},A)\ll_A1.
$$
By induction each $J_m$ and $J'_n$ is $R$-balanced with respect to $W_m$
and $W'_n$ respectively. Moreover since for each $(w,b) \in W_m \oplus W'_n$ either 
$\supp w \subseteq B$ or $\supp w \subseteq B^c$, the graph $\Gamma(W_m \oplus W'_n)$ 
is the disjoint union of $\Gamma(W_m)$ on $B$ and $\Gamma(W'_n)$ on $B^c$. 
Therefore the box $J_m \times J'_n$ is also $R$-balanced with respect to 
$W_m \oplus W'_n$, and $\vol(J_m\times J'_n)\le R^2\vol(J)$. 
Hence this is the required decomposition of $(W,J,Q,A)$.

\textbf{Case $n_{\text{long}} > 0$ and $n_{\text{mix}} > 0$.} 
Let $(w,b) \in W$ correspond to a mixed inequality.
Let $H := \min_i \# J_i$ and $q$ the index such that 
$$
\max_{i \in \supp w_{B^c}} \# J_i = \# J_q.
$$
By Lemma~\ref{lem:split}(1), splitting along $(w,b)$ decomposes $(W,J,Q,A)$ into
$(W^+,J,Q,A+2d)$ and $(W^-,J,Q,A+1)$, and by Lemma~\ref{lem:split}(2) the induction hypothesis gives
a decomposition of $(W^+,J,Q,A+2d)$ as required. It remains to obtain a decomposition of
$(W^-,J,Q,A+1)$.

On $\cP(W^-)$ the inner product $w_{B^c}\cdot x$ varies by less than $A^2H$, so
Lemma~\ref{lem:short-coordinate}, applied with $a=w_{B^c}$ to the piece $(W^-,J,Q,(A+1)^2)$, gives a decomposition of it by $\#U\ll_A1$ transitions $x=u+Dy$ to the pieces $(W'_u,J'_u,Q',R')$,  with $u\in U$, where $R'\ll_A1${. We} may
assume that $R'\ge A$.

By Lemma \ref{lem:short-coordinate} for each $u \in U$ in this decomposition 
the box $J'_u$ has the property that $\# J'_{u,q} = H$ and $H \leq \# J'_{u,i}$ for all $i$
and the coordinates with $i \in B$ remain of the same length, i.e. $\# J'_{u,i} = \# J_i$. 
Thus compared to $(W, J, Q, A)$, $n_{\text{long}}$ has decreased in each $(W'_u,J'_u,Q',R')$, 
since $q$, which was previously long in $(W,J, Q, A)$, 
became short in $(W'_u,J'_u,Q',R')$ while no other index that was short 
in $(W, J, Q, A)$ became long in $(W'_u,J'_u,Q',R')$. 
Thus the piece $(W'_u,J'_u,Q',R')$ has smaller index than $(W, J, Q, A)$, and the
induction hypothesis gives a decomposition of it with constant $\ll_A1$. By the side bounds of
Lemma~\ref{lem:short-coordinate} and $H\le\#J_q$,
\[
 \vol(J'_u)=H\prod_{j\ne q}\#J'_{u,j}\le3^{d-1}\frac{H}{\#J_q}\vol(J)\ll_A\vol(J).
\]
Composing (Lemma~\ref{lem:compose}) the decompositions of $(W^+,J,Q,A+2d)$ and of the
$(W'_u,J'_u,Q',R')$ with the transitions above gives the required decomposition of $(W,J,Q,A)$.
\end{proof}

\subsection{Proof of Theorem~\ref{thm:decomp}}\label{sec:proof-decomp}
\begin{proof}
Let $A=\max\{k,r,V,h\}$ and $Q(x)=\sum_{j=1}^kc_jx_j^2$. Then $(W,I,Q,A)$ is a piece, and
Proposition~\ref{prop:pieces} gives a decomposition of it into pieces
$(W_m,J_m,Q_m,A_m)$ by transitions $x=u_m+\Phi_my$ (this is \eqref{eq:decomposition}). The other
conclusions follow from the definition of a piece.
Since $0\le n_{\mathrm{long}}<k$ and $0\le n_{\mathrm{mix}}\le A$, the constant depends only on $k$ and
$A$, hence on $k,r,V,h$.

If $(\pm w_0,\pm b_0)\in W$, then $w_0\cdot x$ is constant on $P$, and Lemma~\ref{lem:short-coordinate}
(applied to the piece $(W,I,Q,A)$, with $a=w_0$ and $H=1$) gives a decomposition of $(W,I,Q,A)$ into
$\ll_A1$ pieces $(W'_u,J'_u,Q',R')$ with $R'\ll_A1$ and $\vol(J'_u)\le3^{k-1}\vol(I)/\len(w_0;I)$, as
$\#J'_{u,q}=1$ and $\#I_q=\len(w_0;I)$. Composing (Lemma~\ref{lem:compose}) with the decompositions of
these pieces given by Proposition~\ref{prop:pieces} gives \eqref{eq:decomposition-section}.
\end{proof}

\subsection{Proof of Theorem~\ref{thm:box}}\label{sec:proofbox}

\begin{proof}[Proof of Theorem~\ref{thm:box}]
Inequalities $(0,b)\in L$ are either vacuous or make the sum empty, so we may assume that $v\ne0$ for
all $(v,b)\in L$. Let $W$ consist of $L$ together with the $2k$ inequalities defining $I$, so that
$P:=\cP(L)\cap I=\cP(W)$, and assume $P\ne\emptyset$. Let $I_0\subseteq I$ be the smallest box
containing $P$. Then $\vol(I_0)\le\vol(I)$ and $\vol(I_0)/\len(w_0;I_0)\le\vol(I)/\len(w_0;I)$, since
$\vol(J)/\len(w_0;J)$ is the product of the sides of $J$ other than a longest one in $\supp w_0$.

Theorem~\ref{thm:decomp}, applied to $W$, gives the decomposition \eqref{eq:decomposition} of $P$ with
$R=R(k,r+2k,V,h)$. Substituting $n=u_m+\Phi_my$ on the $m$-th piece gives
\[
 \alpha\sum_{j=1}^k c_jn_j^2+\theta\cdot n
 =\alpha\sum_{j=1}^k c'_{m,j}y_j^2+\theta_m\cdot y+\kappa_m
\]
with $\theta_m\in\RR^k$ and $\kappa_m\in\RR$, where the $c'_{m,j}=\sum_ic_i\Phi_{m,ij}^2$ are the diagonal
entries of the diagonal matrix $\Phi_m^{\mathsf T}\operatorname{diag}(c_j)\Phi_m$. They are positive
integers, as $\Phi_m$ is non-singular, and at most $R$. Hence
\[
 \Bigl|\sum_{n\in P}e\Bigl(\alpha\sum_{j=1}^kc_jn_j^2+\theta\cdot n\Bigr)\Bigr|
 \le\sum_{m=1}^M\Bigl|\sum_{y\in\cP(W_m)\cap J_m}
 e\Bigl(\alpha\sum_{j=1}^kc'_{m,j}y_j^2+\theta_m\cdot y\Bigr)\Bigr|.
\]

Fix $m$, and let $C_1,{\ldots},C_s$ be the components of $\Gamma(W_m)$. As each $(w,b)\in W_m$ has
$\supp w$ inside a single $C_t$, using the notation of the proof of
Proposition~\ref{prop:pieces}, that is, for $E\subseteq\{1,{\ldots},k\}$, $W_E$ consists of the $((w_i)_{i\in E},b)$
with $(w,b)\in W$ and $\supp w\subseteq E$, and $J_E=\prod_{i\in E}J_i$, we have
\[
 \cP(W_m)\cap J_m=\prod_{t=1}^s\bigl(\cP\bigl((W_m)_{C_t}\bigr)\cap(J_m)_{C_t}\bigr).
\]
The phase is a sum over $j$ of terms depending only on $y_j$, so the inner sum splits accordingly into a product of
$s$ sums. By Theorem~\ref{thm:decomp}, each pair $\bigl((W_m)_{C_t},(J_m)_{C_t}\bigr)$ is $R$-balanced
in the sense of Definition~\ref{def:balanced}, so Theorem~\ref{thm:general}, with $R$ replaced by
$\max(R,a)$, bounds the $t$-th sum by $\ll\vol\bigl((J_m)_{C_t}\bigr)^{1/2}$. Taking the product over
$t$, the $m$-th inner sum is $\ll\vol(J_m)^{1/2}$.

Summing over the $M\le R$ pieces and using \eqref{eq:boxed-conclusion}, which gives
$\vol(J_m)\le R\vol(I_0)\le R\vol(I)$, proves the first bound. If $\pm(w_0,b_0)\in L$, then
\eqref{eq:decomposition-section} gives instead
$\vol(J_m)\le R\vol(I_0)/\len(w_0;I_0)\le R\vol(I)/\len(w_0;I)=R\,\sect(w_0;I)$. All
implied constants depend only on $k,r,V,h,a$.
\end{proof}
\section{Equidecompositions: Proof of Proposition \ref{prop:UNmap}}
\label{sec:explicitEquidecomposition}

This {section} proves Proposition \ref{prop:UNmap}, giving two distinct construction methods  for the map $U_N$. 
It uses the notation and definitions written down prior to Proposition \ref{prop:UNmap}, which we recommend {that the reader recall}.

\subsection{Unimodular Triangulations (the {implicit} method)}

We say that $P\subset \RR^k$ is an \emph{integral convex polytope} if $P = \widebar{\text{conv}}(a_1,a_2,{\ldots},a_n)$ is the closed convex hull of a finite number of integer lattice points  $a_j \in \z^k$. An \emph{$m$-simplex} $P$, for $0\leq m\leq k$, 
is the convex hull of $m+1$ affinely independent points
$a_0,{\ldots},a_m \in \Z^k$ over $\RR$. It is \emph{unimodular} if the matrix
$
A = (a_1-a_0, \ldots, a_m-a_0)
$
has columns forming a $\Z$-basis of $A\RR^m\cap\Z^k$. When $m=k$, this is equivalent to $\det(A)=\pm1$, and to the Lebesgue volume of $P$ being $1/k!$ (see \cite[Section~2.1 and Definition~2.3.2]{HNP}). An \emph{open polytope} (respectively, \emph{open simplex}) is the relative interior of a polytope (respectively, simplex). A \emph{unimodular triangulation} of $P$ is a partition $P=\bigsqcup_{i=1}^{n_0}T_i$, where $T_i$ are open unimodular simplices{ whose closures intersect in common faces or are disjoint}. {If $P$ admits a unimodular triangulation, then} in any such triangulation the number $f_m$ of unimodular $m$-simplices used is the same. The \emph{Ehrhart polynomial} associated with a polytope $P$ is $p(t)=\#(t P \cap \Z^k)$ for $t\in \Z_{\geq 0}$. A classical result {of} Ehrhart \cite{Ehr} shows that $p(t)$ is indeed a polynomial and{, for any unimodular triangulation of $P$,}
\[
 |tP\cap\mathbb Z^k|=\sum_{m=0}^k f_m\binom{t-1}{m}{\qquad(t\geq1)}.
\]
The following argument is partially contained in {\cite[Proposition~4.3]{HM08}}, except that we specialize it to the convex {hulls} of the sets
\begin{align*}
\cS_N&:=\left\{j\in \Z_{\ge0}^k:
  j_1+\cdots+j_k\le N\right\} \quad \text{and} \quad
\B_N:=\prod_{s=1}^k \bigg\{0,1,\ldots, \frac{N}{k+1-s}\bigg\}.
\end{align*}

\begin{proof}[Proof of Proposition \ref{prop:UNmap} via {unimodular} triangulations]
{For $N=0$, set $U_0(0)=0$. Assume $N\geq1$.} Consider the integral convex polytopes $S=\{x\in \RR_{\geq 0}^k : x_1+\cdots{+} x_k \leq \lcmk\}$ and $B = \prod_{s=1}^k [0,\lcmk/(k+1-s)] $, where $\lcmk=\text{lcm}(1,2,\ldots,k)$. {The facet normals of $S$ are the negative coordinate vectors and $(1,\ldots,1)$, while those of $B$ are the signed coordinate vectors. Every $k\times k$ determinant formed from either collection is $0$ or $\pm1$, so $S$ and $B$ admit unimodular triangulations by \cite[Theorem~6.3.3]{HNP}.} It is clear that for every integer $t\geq1$,
\begin{align*}
|t {S} \cap\Z^k|
=\binom{t \lcmk+k}{k}
=\prod_{s=1}^k\frac{t\lcmk+k+1-s}{k+1-s}
=|tB\cap\Z^k|.
\end{align*}
Thus ${S}$ and $B$ have the same Ehrhart polynomial. In particular, there are integers $f_0,f_1,\ldots,{f_k}\geq 0$ and collections $\{T_{m,1},\ldots, T_{m,f_m}\}$ and $\{T_{m,1}',\ldots, T_{m,f_m}'\}$ of open unimodular $m$-simplices, for $m=0,1,\ldots,k$, such that 
\[
S=\bigsqcup_{m=0}^{k} \bigsqcup_{i=1}^{f_m} T_{m,i} \quad \text{and} \quad B=\bigsqcup_{m=0}^{k} \bigsqcup_{i=1}^{f_m} T_{m,i}'.
\]
Any two open unimodular simplices {of the same dimension} are ${\aff_k(\Z)}$-equivalent{. That} is, there are
affine unimodular maps $U_{m,i}(x)=A_{m,i}x+b_{m,i}$ with $A_{m,i}\in\GL_k(\Z)$ and $b_{m,i}\in\Z^k$, 
such that $T_{m,i}'=U_{m,i}(T_{m,i})$ for all $m$ and $i$. Observe now that  when $\lcmk|N$ we have
\[
\tfrac{N}{\lcmk}S=\bigsqcup_{m=0}^{k} \bigsqcup_{i=1}^{f_m} \tfrac{N}{\lcmk} T_{m,i} \quad \text{and} \quad {\tfrac{N}{\lcmk}}B=\bigsqcup_{m=0}^{k} \bigsqcup_{i=1}^{f_m} U_{m,i,N}(\tfrac{N}{\lcmk} T_{m,i}),
\]
where $U_{m,i,N}(x)=A_{m,i}x+\frac{N}{\lcmk}b_{m,i}$. Intersecting with $\Z^k$, we obtain
\[
\cS_N =\bigsqcup_{m=0}^{k} \bigsqcup_{i=1}^{f_m} \big((\tfrac{N}{\lcmk} T_{m,i}) \cap \Z^k\big) \quad \text{and}\quad \B_N =\bigsqcup_{m=0}^{k} \bigsqcup_{i=1}^{f_m} U_{m,i,N}\big((\tfrac{N}{\lcmk} T_{m,i}) \cap \Z^k\big), 
\]
where we used that $U_{m,i,N}(\tfrac{N}{\lcmk} T_{m,i}) \cap \Z^k = U_{m,i,N}\big((\tfrac{N}{\lcmk} T_{m,i}) \cap \Z^k\big) $ because each $U_{m,i,N} \in \aff_k(\Z)$. We then define $U_N(x):=U_{m,i,N}(x)$ if $x\in \cS_N \cap \frac{N}{\lcmk} T_{m,i}$. It is easy to see that $U_{N}: \cS_N \to \B_N$ {is a} piecewise $\aff_k(\Z)$ bijection, where the domains are the $\Z$-polytopes $(\frac{N}{\lcmk} T_{m,i} ) \cap \Z^k$ and thus we have an $\aff_k(\Z)$-equidecomposition of $\cS_N$ and $\B_N$. The complexity of this equidecomposition is independent of $N$ since we have $O_k(1)$ regions, each region is determined by $\leq 2k+1$ affine inequalities, we have $O_k(1)$ linear maps{, and all their defining normals and matrix entries depend only on $k$}. This finishes the proof{ of the $O_k(1)$ bound{.}}
\end{proof}

\subsection{The direct fold in every dimension (the constructive method)}
The map $U_N$ will be built by a recursive construction on the dimension of the ambient space $k$.
{Let's} then {redefine} $U_{k,N}=U_N$, $\cS_{k,N}=\cS_N$ and $\B_{k,N}=\B_N$. Let $\lcmk \mid N$, where $\lcmk = \text{lcm}(1,2,\ldots,k)$. First{, for $k\geq2$,} we construct an $\aff_k(\Z)$-equidecomposition of $\cS_{k,N}$ and $\{0,1,\ldots,N/k\} \times \cS_{k-1,N}$ of complexity bounded by ${2k+1}$. This is done in Proposition \ref{prop:fold}. The benefit here is that everything is explicit and analytic. This produces a piecewise $\aff_k(\Z)$ bijection $\Phi_{k,N}: \cS_{k,N}\to \{0,1,\ldots,N/k\} \times \cS_{k-1,N}$ defined in \eqref{eq:fold}. We then simply set
\begin{align}\label{def:UN}
U_{k,N}(x) = (l_1,  U_{k-1,N}(l_2,\ldots,l_k) ) \quad \text{if} \quad \Phi_{k,N}(x) = (l_1,l_2,\ldots,l_k).
\end{align}
One can then  deduce the explicit formula
\begin{align*}
U_{k,N}
=
\Bigl(
\pi_{k-2}^{\mathsf T}\pi_{k-2}
+(\pi^{k-2})^{\mathsf T}\circ\Phi_{2,N}\circ\pi^{k-2}
\Bigr)\circ\cdots\circ
\Bigl(
\pi_1^{\mathsf T}\pi_1
+(\pi^1)^{\mathsf T}\circ\Phi_{k-1,N}\circ\pi^1
\Bigr)
\circ\Phi_{k,N},
\end{align*}
where $\pi_j(x)=(x_1,\ldots,x_j)$ and $\pi^j(x)=(x_{j+1},\ldots,x_k)$,  with their right inverses $\pi_j^{\mathsf T}(u)=(u,0,\ldots,0)$
and $(\pi^j)^{\mathsf T}(v)=(0,\ldots,0,v)$. In particular, we have $U_{1,N}=\mathrm{Id}$. For
$2\mid N$ and $x\in\cS_{2,N}$ we have
\[
U_{2,N}(x)=
\begin{cases}
\bigl(x_1,x_1+x_2\bigr),
& \text{if }x_1\leq N/2,\\
\bigl(x_1+x_2-N/2,x_1-N/2-1\bigr),
& \text{if }x_1>N/2.
\end{cases}
\]
For $6\mid N$ and $x\in\cS_{3,N}$ we have
\[
U_{3,N}(x)=
\begin{cases}
\bigl(x_1,x_1+x_2,x_1+x_2+x_3\bigr),
& \text{if }
  \begin{aligned}[t]
  x_1&\leq N/3,\, x_1+x_2\leq N/2,
  \end{aligned}\\[0mm]
\bigl(x_1,x_1+x_2+x_3-N/2,x_1+x_2-N/2-1\bigr),
& \text{if }
  \begin{aligned}[t]
  x_1&\leq N/3,\, x_1+x_2>N/2,
  \end{aligned}\\[0mm]
\bigl(x_1+x_2-N/3,x_1-N/3-1,x_1+x_2+x_3\bigr),
& \text{if }
  \begin{aligned}[t]
  x_1&>N/3,\, x_1+x_2\leq 2N/3,
  \end{aligned}\\[0mm]
\bigl(x_1+x_2+x_3-2N/3,x_3,x_1+x_3-N/3-1\bigr),
& \text{if }
  \begin{aligned}[t]
  x_1&>N/3,\, x_1+x_2>2N/3.
  \end{aligned}
\end{cases}
\]

\begin{proof}[Proof of Proposition \ref{prop:UNmap} via the folding method]
{For $N=0$, set $U_{k,0}(0)=0$. Assume $N\geq1$. We define $U_{1,N}(x)=x$. This has complexity $R_1=2$.} {For $k\geq2$, define} $U_{k,N}$ recursively as in \eqref{def:UN}, where $\Phi_{k,N}$ is defined in \eqref{eq:fold}. From \eqref{eq:sourcePieces}, \eqref{eq:fold} and \eqref{eq:targetPieces} we deduce the following schematic picture:
\[
\cS_{k,N} = \bigsqcup_{m=1}^{k} \C_{k,N,m} \ \xrightarrow{{\Phi_{k,N}}} \ \{0,1,\ldots,N/k\} \times \cS_{k-1,N} = \bigsqcup_{m=1}^{{k}} (A_{k,N,m}  \C_{k,N,m} + a_{k,N,m} ),
\]
where $\D_{k,N,m}={A_{k,N,m}  \C_{k,N,m} + a_{k,N,m}}$ and the $\aff_k(\Z)$-maps ${A_{k,N,m} x + a_{k,N,m}}$ are {defined} by \eqref{eq:fold}. We see that each matrix ${A_{k,N,m}}$ only has $\{0,1\}$-{entries} and that each $\Z$-polytope ${\C_{k,N,m}}$ is defined by {the affine inequalities in \eqref{eq:sourcePieces}, together with the coordinate inequalities $x_j\geq0$ and $x_1+\cdots+x_k\leq N$. Their facet normals have $\ell^1$-norm at most $k$.} By Proposition \ref{prop:fold} we also know that ${A_{k,N,m}} \in GL_{k}(\Z)$. Now assume by induction that, for $k\geq 2$, we have already {built} the map $U_{k-1,N}$, induced by the following $\aff_{k-1}(\Z)$-equidecomposition of $\cS_{k-1,N}$ and $\B_{k-1,N}$:
\[
\cS_{k-1,N} = \bigsqcup_{i=1}^{n_{k-1}} \E_{k-1,N,i} \ \xrightarrow{U_{k-1,N}} \ \B_{k-1,N} = \bigsqcup_{i=1}^{n_{k-1}} (F_{k-1,N,i}  \E_{k-1,N,i} + f_{k-1,N,i} ),
\]
{where $\E_{k-1,N,i}=\cP(L_{k-1,N,i})$.}
Assume that $U_{k-1,N}$ has complexity $ \leq R_{k-1}$, that is, $n_{k-1}\leq R_{k-1}$, {$\#L_{k-1,N,i}\leq R_{k-1}$,} {$|v|_1 \leq R_{k-1}$} for each  $(v,b) \in {L_{k-1,N,i}}$ and ${|F_{k-1,N,i}|_{\infty}} \leq R_{k-1}$. By the definition \eqref{def:UN}, we obtain the following decomposition of $\cS_{k,N}$ for the map $U_{k,N}$:
\[
 \cS_{k,N} = \bigsqcup_{m=1}^{k} \bigsqcup_{i=1}^{n_{k-1}} \C_{k,N,m} \cap \cP\Bigl(\big\{ \big(A_{k,N,m}^T (0,v),{b+(0,v)\cdot a_{k,N,m}}\big) \, : \, (v,b) \in L_{k-1,N,i}\big\}\Bigr).
\]
{Indeed, the additional inequality is $(0,v)\cdot(A_{k,N,m}x+a_{k,N,m})+b\geq0$.}
Momentarily, we let $\C_{k,N,m,i}$ denote the pieces above. Note that {a priori} some of these pieces might be empty, but nevertheless they form a $\Z$-polytope partition of $\cS_{k,N}$. On each non-empty piece $\C_{k,N,m,i}$, the map $U_{k,N}$ is given by
\[
x\in\RR^k\mapsto
\Bigl(\pi_1(A_{k,N,m}x+a_{k,N,m}),\,
F_{k-1,N,i}\pi^1(A_{k,N,m}x+a_{k,N,m})+f_{k-1,N,i}\Bigr),
\]
where $\pi_1:\RR^k \to \RR$ is the projection {onto} the first component and $\pi^1:\RR^k \to \RR^{k-1}$ is {the} projection {onto} the last $k-1$ components. These maps are easily invertible, with similar expressions, using the inverses $A_{k,N,m}^{-1}$ and $F_{k-1,N,i}^{-1}$, which shows they belong to $\aff_k(\Z)$.
We conclude that $U_{k,N}$ induces {an} $\aff_k(\Z)$-equidecomposition of $\cS_{k,N}$ and $\B_{k,N}$. It remains to compute its complexity $R_k$. Let $n_k $ be the number of non-empty $\C_{k,N,m,i}$. We see immediately that $n_k \leq k n_{k-1} \leq k R_{k-1}$. Moreover,
\begin{align*}
{\left|\begin{pmatrix}\pi_1\circ A_{k,N,m}\\F_{k-1,N,i}\circ\pi^1\circ A_{k,N,m}\end{pmatrix}\right|_\infty}
&\leq | A_{k,N,m} |_\infty + (k-1)  | F_{k-1,N,i}|_\infty | A_{k,N,m} |_\infty \\
& \leq 1 + (k-1)R_{k-1} \leq k R_{k-1}.
\end{align*}
Finally, the affine {inequalities} defining each $\C_{k,N,m,i}$ have normals with $\ell^1$-norm bounded by
\[
\max_{(v,b) \in L_{k-1,N,i}}\{k, |A_{k,N,m}^T (0,v)|_1\} \leq \max_{(v,b) \in L_{k-1,N,i}}\{k, {k}\,|A_{k,N,m}|_\infty |v|_1\} \leq \max\{k,{kR_{k-1}}\} \leq kR_{k-1}.
\]
{Each piece is defined by at most $2k+1+R_{k-1}$ inequalities. We may therefore take $R_k=kR_{k-1}+2k+1$, which depends only on $k$. This finishes the proof.}
\end{proof}

\begin{figure}[H]
  \centering
     \includegraphics[scale=.3]{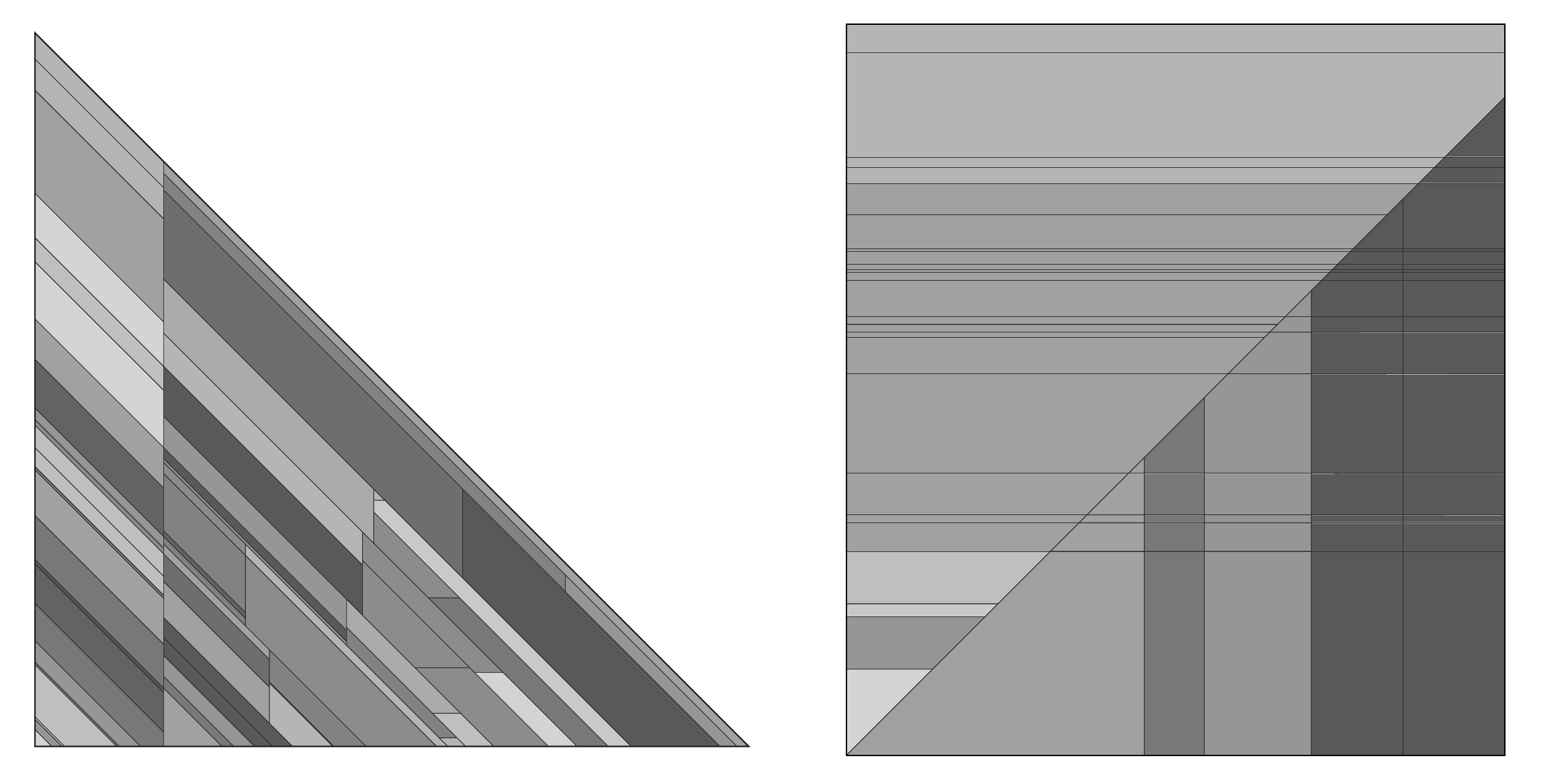} 
  \caption{For $N=2520$, the left picture shows the equidecomposition of
$\cS_{10,N}$ sliced along the plane
$(x_3,\ldots,x_{10})=(108,92,162,89,146,78,219,229)$.
On the right we show its image under $U_N$, projected onto the
$(y_1,y_2)$-plane of $\B_{10,N}$.}
  \label{fig:U10}
\end{figure}

We now construct  an explicit $\aff_k(\Z)$-equidecomposition of $\cS_{k,N}$ and $\{0,1,\ldots,N/k\} \times \cS_{k-1,N}$.
Set $\Phi_{1,N} = \text{Id}$.  For $k\geq2${, $N\geq0$ with $k\mid N$,} and ${1\leq m\leq k}$, define
\begin{align}
\C_{k,N,m}=\left\{x\in{\cS_{k,N}}:
 \sum_{\nu=1}^{i}x_\nu>\frac{iN}{k}\ (1\leq i<m),
 \quad
 \sum_{\nu=1}^{m}x_\nu\leq\frac{mN}{k}\right\}.
\label{eq:sourcePieces}
\end{align}
These are disjoint $\Z$-polytopes {that produce a partition of}
${\cS_{k,N}}$. {In what follows, coordinate expressions or constraints containing {out-of-bounds} variables should be omitted in their entirety. Coordinate blocks with empty index ranges should also be omitted.} For $x\in\C_{k,N,m}$, define the exact map directly by
\begin{align}
\Phi_{k,N}(x)
&=
\begin{cases}
\big(x_1,x_1+x_2,x_3,\ldots,x_k\big),
& \text{if }m=1,\\[1mm]
\begin{aligned}[t]
&\left(\displaystyle\sum_{\nu=1}^{m}x_\nu
-\frac{(m-1)N}{k},
x_m,x_{m-1},\ldots,x_3,\right.\\[-1mm]
&\quad \quad \left.x_1-\frac Nk-1,
x_2+x_{m+1}+\frac Nk+1,x_{m+2},\ldots,x_k\right),
\end{aligned}
& \text{if }2\leq m \leq k.
\end{cases}
\label{eq:fold}
\end{align}
It is now clear that $\Phi_{k,N}(\cS_{k,N}) \subset \{0,1,\ldots,N/k\} \times \cS_{k-1,N}$ and that $\Phi_{k,N}$ is piecewise affine, with integer matrices and translations. We now describe its inverse. Let
{
\begin{align}
\D_{k,N,m}=\bigg\{y\in\{0,1,\ldots,N/k\}\times\cS_{k-1,N}:{}&
 \sum_{\nu=2}^{i+1}y_\nu<y_1+\frac{(i-1)N}{k}\ (1\leq i<m),
 \nonumber\\[-1mm]
&\sum_{\nu=2}^{m+1}y_\nu\geq y_1+\frac{(m-1)N}{k}  (m<k) \bigg\},
\label{eq:targetPieces}
\end{align}
for $m=1,\ldots,k$. We claim that $\Phi_{k,N}({\C_{k,N,m}}) = \D_{k,N,m}$ for each $m$ and that
\begin{align}
\Phi_{k,N}^{-1}(y)
&=
\begin{cases}
\big(y_1,y_2-y_1,y_3,\ldots,y_k\big),
& \text{if }m=1,\\[1mm]
\begin{aligned}[t]
&\Big(
y_m+\frac Nk+1,
y_1+\frac{(m-2)N}{k}-1-\sum_{\nu=2}^{m}y_\nu,
y_{m-1},\ldots,y_2,\\
& \quad \quad \sum_{\nu=2}^{m+1}y_\nu-y_1-\frac{(m-1)N}{k},
y_{m+2},\ldots,y_k\Big),
\end{aligned}
& \text{if }2\leq m \leq k.
\end{cases}
\label{eq:foldInverse}
\end{align}
We prove that in the following proposition.

\begin{proposition}[The fold]\label{prop:fold}
{Let $k\geq2$ and let $N\geq1$ be divisible by $k$.}
The map
\begin{align*}
 \Phi_{k,N}:\cS_{k,N}
 \longrightarrow
 \{0,1,\ldots,N/k\}\times
{\cS_{k-1,N}}
\end{align*}
induces an $\aff_k(\Z)$-equidecomposition of $\cS_{k,N}$ and $ \{0,1,\ldots,N/k\}\times \cS_{k-1,N}$ given by \eqref{eq:sourcePieces} and \eqref{eq:targetPieces} (respectively) 
of complexity bounded by ${2k+1}$.
{Its inverse is given by \eqref{eq:foldInverse}.}
\end{proposition}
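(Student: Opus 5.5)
The proof is a direct verification in four steps. First, the sets $\C_{k,N,m}$ partition $\cS_{k,N}$. Second, $\Phi_{k,N}$ maps $\C_{k,N,m}$ into $\D_{k,N,m}$. Third, the formula \eqref{eq:foldInverse} maps $\D_{k,N,m}$ back into $\C_{k,N,m}$ and inverts $\Phi_{k,N}$ there. Fourth, the sets $\D_{k,N,m}$ partition $\{0,\ldots,N/k\}\times\cS_{k-1,N}$. Together these give that $\Phi_{k,N}$ is a bijection with the stated pieces. The complexity bound is then read off. Each $\C_{k,N,m}$ is cut out by the $k$ coordinate inequalities, the inequality $x_1+\cdots+x_k\le N$, and at most $m\le k$ partial-sum inequalities, so at most $2k+1$ constraints, each with normal of $\ell^1$-norm at most $k$. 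Each linear part in \eqref{eq:fold} has entries in $\{0,1\}$, and it is unimodular because the affine formulas in \eqref{eq:foldInverse} are integral and invert it.

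For the partition of the source, set $s_i=\sum_{\nu\le i}x_\nu$. Then $m$ is the least index with $s_m\le mN/k$. Such an index exists since $s_k\le N$. This gives disjointness and covering at once.

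For $m=1$, the image $(x_1,x_1+x_2,x_3,\ldots)$ satisfies $y_1=x_1\le N/k$. The tail $(y_2,\ldots,y_k)=(x_1+x_2,x_3,\ldots,x_k)$ has sum $s_k\le N$ and nonnegative entries. The defining inequality of $\D_{k,N,1}$ is $y_2\ge y_1$, which is $x_2\ge0$.

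For $2\le m\le k$, the first coordinate $y_1=s_m-(m-1)N/k$ lies in $[0,N/k]$. The upper bound is $s_m\le mN/k$. For the lower bound, $s_{m-1}>(m-1)N/k$ already gives $y_1\ge0$ since $x_m\ge0$. The entry $x_1-N/k-1$ is nonnegative because $s_1=x_1>N/k$. The remaining entries are clearly nonnegative, and the sum of $y_2,\ldots,y_k$ telescopes to $s_k-(m-1)N/k\cdot 0\ldots$; I would compute it and check that it is $\le N$, namely the tail sum equals $\sum_{\nu\ne m}\ldots$ minus the shifts, which is at most $N$. The strict inequalities $s_i>iN/k$ for $i<m$ should translate, after substitution, exactly into the strict inequalities $\sum_{\nu=2}^{i+1}y_\nu<y_1+(i-1)N/k$. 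Similarly $s_m\le mN/k$, together with the constraint on $s_{m+1}$, should become the non-strict inequality for index $m$. Here one uses the reversed order $x_m,x_{m-1},\ldots,x_3$ in the middle block: partial sums of $y_2,y_3,\ldots$ are complements of partial sums of the $x$'s inside $s_m$.

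Running the same computation backwards with \eqref{eq:foldInverse} shows the inverse lands in $\C_{k,N,m}$ and that both compositions are the identity. The partition of the target follows symmetrically. Given $y$, let $m$ be the least index where the strict inequality $\sum_{\nu=2}^{m+1}y_\nu<y_1+(m-1)N/k$ fails, with $m=k$ if none fails. Since the conditions are nested in the same way as for the source, the pieces are disjoint and exhaustive.

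The main obstacle is the index bookkeeping for general $m$. This includes the boundary cases $m=2$ (empty middle block) and $m=k$ (no $x_{m+1}$, and the convention of omitting out-of-bounds terms). It also includes matching each strict source inequality with the corresponding target inequality without off-by-one errors from the ``$+1$'' shifts. I would first verify $k=2$ and $k=3$ against the explicit formulas for $U_{2,N}$ and $U_{3,N}$, and then handle general $m$ with the partial-sum substitution $t_i=s_i-iN/k$, in which all constraints become sign conditions on the $t_i$.
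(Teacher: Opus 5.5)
Your outline matches the paper's proof. You partition the source by the first $m$ with $s_m\le mN/k$, substitute \eqref{eq:fold} to land in $\D_{k,N,m}$, substitute \eqref{eq:foldInverse} to return, partition the target by the first failing strict inequality, and read off the complexity. The complexity count is right: $k$ pieces, at most $2k+1$ constraints with normals of $\ell^1$-norm at most $k$, and $\{0,1\}$-matrices that are unimodular because the inverse formulas are integral. The gap is in the only step with real content, matching the constraints of $\C_{k,N,m}$ with those of $\D_{k,N,m}$. You only predict this step, and the prediction is false. The constraint $s_i>iN/k$ does not become the target inequality with the same index $i$. Nor does $s_m\le mN/k$ become the non-strict inequality with index $m$, and $\C_{k,N,m}$ imposes no constraint on $s_{m+1}$ at all. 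Carrying out the substitution for $2\le m\le k$ gives
\[
\sum_{\nu=2}^{i+1}y_\nu-y_1-\frac{(i-1)N}{k}=
\begin{cases}
\frac{(m-i)N}{k}-s_{m-i}, & 1\le i\le m-2,\\
-x_2-1, & i=m-1,\\
x_{m+1}, & i=m<k.
\end{cases}
\]
So, for $2\le j\le m-1$, the constraint $s_j>jN/k$ matches the target inequality with the reversed index $m-j$; this reversal is what the backwards block $x_m,\ldots,x_3$ produces. The target inequality with index $m-1$ holds automatically on the image, and read backwards it is exactly $x_2\ge0$. The index-$m$ inequality is just $x_{m+1}\ge0$. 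The two remaining source constraints are absorbed into membership in $\{0,\ldots,N/k\}\times\cS_{k-1,N}$: $s_1>N/k$ is $y_m=x_1-N/k-1\ge0$, and $s_m\le mN/k$ is $y_1\le N/k$.

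For the reverse inclusion, the identity you need is
\[
s_i=y_1+\frac{(m-1)N}{k}-\sum_{\nu=2}^{m-i+1}y_\nu \qquad (2\le i\le m),
\]
together with $x_2\ge0$ and $x_{m+1}\ge0$, which come from the last two target inequalities. The tail sum you left garbled is $y_2+\cdots+y_k=s_k$ for $m<k$. For $m=k$ it is $s_k-x_2-N/k-1$, because the entry $x_2+x_{m+1}+N/k+1$ is omitted in that case. With this corrected bookkeeping the verification closes exactly as in the paper. Your proposed variables $t_i=s_i-iN/k$ would not reduce everything to sign conditions, since $x_j\ge0$ becomes $t_j-t_{j-1}\ge -N/k$.
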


{
\begin{proof}
{Let's} first verify that $\Phi_{k,N}({\C_{k,N,m}}) \subset \D_{k,N,m}$. Let $x\in\C_{k,N,m}$ and put $y=\Phi_{k,N}(x)$. 
Direct substitution in \eqref{eq:fold} gives
\[
\sum_{\nu=2}^{i+1}y_\nu-y_1-\frac{(i-1)N}{k}
=\begin{cases}
\displaystyle\frac{(m-i)N}{k}-\sum_{\nu=1}^{m-i}x_\nu<0,
 &1\leq i\leq m-2,\\[1mm]
-x_2-1<0,&i=m-1,\\[1mm]
x_{m+1}\geq0,&i=m<k.
\end{cases}
\]
These are exactly the inequalities defining $\D_{k,N,m}$. Conversely, let $y\in\D_{k,N,m}$ and define $x=\Phi_{k,N}^{-1}(y)$ (which {a priori} is not the inverse map).  {It is not hard to verify that $\Phi_{k,N}^{-1}(\{0,1,\ldots,N/k\}\times\cS_{k-1,N})\subset\cS_{k,N}$ and that the $\D_{k,N,m}$ partition $\{0,1,\ldots,N/k\}\times\cS_{k-1,N}$.} The case $m=1$ is trivial.
Suppose $m\geq2$. Then $x_1=y_m+N/k+1>N/k$.
For $2\leq i\leq m$,
\begin{align}
\sum_{\nu=1}^{i}x_\nu
=y_1+\frac{(m-1)N}{k}
-\sum_{\nu=2}^{m-i+1}y_\nu > \frac{iN}{k}
\label{eq:recoveredSums}
\end{align}
when $i<m$. If $i=m$, the sum on the right is
empty, so the result is at most $mN/k$ because $y_1\leq N/k$.
This proves $x\in\C_{k,N,m}$.

Substitution in the two displayed formulas gives
$\Phi_{k,N}(x)=y$ and $\Phi_{k,N}^{-1}(\Phi_{k,N}(x))=x$
on the respective pieces. Each pair of affine formulas is then invertible
on $\RR^k$. Both have integer coefficients because $k\mid N$,
so their linear parts belong to $\GL_k(\Z)$.
Finally, there are at most $k$ nonempty source pieces, each piece is defined by at most ${2k+1}$ affine inequalities,  their defining
normals have $\ell^1$-norm at most $k$, and the matrices present in
\eqref{eq:fold} have entries in $\{0,1\}$. This proves the desired complexity
bound.
\end{proof}
}
}

\section*{Acknowledgments}
F.G. acknowledges support from the Office of Naval Research
GRANT14201749, the Serrapilheira Institute
(Serra-2211-41824), FAPERJ (E-26/210.245/2024)
and CNPq (309910/2023-4). M.R. acknowledges support of NSF grant DMS-2401106.

\section*{Use of generative AI}
We began working on Rudin's problem in late October 2025. By December 2025 we
had written notes on the boundedness of polynomials with $\sqrt{2}$
quadratic-phase coefficients and knew how to orthogonalize them using
maps arising from the equidecomposition of the simplex into rectangular
boxes via unimodular simplices.
By May 2026 we had explicit equidecompositions in dimensions 3 and 4. In the
summer of 2026 we obtained the full explicit equidecomposition construction
(second method in Section~5) and square-root cancellation over balanced boxes
(Section~3). An AI system supplied one lemma in Section~3, namely that
$\overline{p}/q$ is badly approximable with denominators below $q$ when $p/q$
is a convergent to a badly approximable irrational. At that point we had two
worked-out drafts covering the two extreme cases: one proving a sharp
square-root cancellation estimate on balanced boxes with arbitrary linear
inequalities as additional constraints, the other bounding quadratic
exponential sums with no polytope constraints beyond $n_1+\dots+n_K=N$.
We gave both drafts to an AI system and asked it to bridge them; it produced
the statement and a first proof of the decomposition theorem (Section~4).
We formalized that proof in Lean with AI assistance, completing the
formalization by 25~August, and subsequently rewrote and simplified the
proof repeatedly; the version in the paper is our own. A complete 69-page
draft of the proof, for $N$ divisible by $(K-1)!$, existed by 30~August 2026,
and the general case followed on 1~September using Olevskii's absorption
trick, whose use we had anticipated from the beginning of the project.
Throughout, we used AI tools to typeset, edit, and check drafts of our
arguments. The paper was submitted to arXiv on 9~October 2026; our work was
independent of Mark Lewko's construction, registered on Palomar the same
day (PALOMAR-2026-10-09-000004). After submission to the arXiv we
formalized the paper in Lean~4 with AI assistance, following the paper
outline; the formalization is available at
\url{https://github.com/maksym-radziwill/Rudin}. The formalization
deviates from the paper in two respects: the inner product is defined by
\eqref{def:innerprod} rather than as an integral over the sphere; and of
the two proofs of Proposition~\ref{prop:UNmap} in
Section~\ref{sec:explicitEquidecomposition}, only the constructive one is
formalized. The statement of each theorem, proposition and lemma
appearing in our paper is formalized as stated, except for
Proposition~\ref{prop:shiftedONS} and
Lemma~\ref{lem:facenorm}, which are formalized in a weaker form.

The decomposition algorithm in Section~\ref{sec:decomp} has parallels
with methods from integer programming. Its recursion
along thin directions resembles Lenstra's algorithm~\cite{Len}.
Turning an integral direction into a coordinate and organizing cases
uniformly over the right-hand sides also occur in parametric integer
programming~\cite{Kan92,ES08}. Here the changes of variables must preserve
the diagonal quadratic form, and the decomposition must control the
side lengths of the enclosing boxes. We did not find
Theorem~\ref{thm:decomp} in this form in the literature.

\end{document}